\renewcommand{\Re}{{\operatorname{Re}\,}}
\renewcommand{\Im}{{\operatorname{Im}\,}}
\renewcommand{\epsilon}{\varepsilon}
\newcommand{\kahler}{K\"ahler }
\newcommand{\kae}{K\"ahler-Einstein }
\newcommand{\moa}{Monge-Amp\`{e}re  }
\newcommand{\PP}{{\mathbb P}}
\newcommand{\N}{{\mathbb N}}
\newcommand{\R}{{\mathbb R}}
\newcommand{\C}{{\mathbb C}}
\newcommand{\Z}{{\mathbb Z}}
\newcommand{\CP}{\C\PP}
\renewcommand{\d}{\partial}
\newcommand{\dbar}{\bar\partial}
\newcommand{\ddbar}{\partial\bar\partial}
\newcommand{\bcal}{\mathcal{B}}
\newcommand{\ccal}{\mathcal{C}}
\newcommand{\gcal}{\mathcal{G}}
\newcommand{\hcal}{\mathcal{H}}
\newcommand{\lcal}{\mathcal{L}}
\newcommand{\mcal}{\mathcal{M}}
\newcommand{\ncal}{\mathcal{N}}
\newcommand{\ocal}{\mathcal{O}}
\newcommand{\qcal}{\mathcal{Q}}
\newcommand{\vcal}{\mathcal{V}}
\newcommand{\xcal}{\mathcal{X}}
\def \xmd {X\backslash D}
\newtheorem{thm}{Theorem}[section]
\newtheorem{theorem}{{Theorem}}[section]
\newtheorem{corollary}[theorem]{{Corollary}}
\newtheorem{lem}[theorem]{{Lemma}}
\newtheorem{proposition}[theorem]{{Proposition}}
\newtheorem{remark}[theorem]{Remark}
\theoremstyle{definition}
\newtheorem{definition}[thm]{Definition}
\numberwithin{equation}{section}
\def \N {\mathbb N}
\def \C {\mathbb C}
\def \Z {\mathbb Z}
\def \R {\mathbb R}
\def \Ric {\text{Ric}}
\title[projective embedding of stably degenerating family]{projective embedding of degenerating family of K\"ahler-Einstein manifolds of negative curvature}
\author{Jingzhou Sun}
\thanks{}
\address{Department of Mathematics, Shantou University, Shantou City, Guangdong Province 515063, China}
\email{jzsun@stu.edu.cn}
\begin{document}

\begin{abstract}
    We study the Bergman embeddings of degenerating families of K\"{a}hler-Einsten manifolds of negative curvature. In one special case, we show that we can construct orthonormal bases so that the induced Bergman embeddings converge to the Bergman embedding of the limit space together with bubbles.
\end{abstract}

\maketitle

\tableofcontents

\section{Introduction}
Let $(X,\omega)$ be a \kahler manifold and let $L\to X$ be a positive line bundle endowed with a Hermitian metric $h$. The space $\hcal_k$ of $L_2$-integrable holomorphic sections of $kL$ is then a Hilbert space with the inner product defined by $$\int_X \langle s_1,s_2 \rangle_h  \omega^n.$$
$\hcal_k$  is called the Bergman space of $kL$. And when $k$ is large enough, an orthonormal basis $\{s_i\}_{i=1}^N$, $N\in \N\cup \{\infty\}$, of $\hcal_k$ induces a Kodaira embedding $\Phi_k:X\to \CP^{N-1}$, called a Bergman embedding. The Bergman embeddings have been playing a critical role as a bridge connecting \kahler geometry to algebraic geometry. For instance, using this bridge, Donaldson in \cite{donaldson2001} proved that, in the compact case, existence of a constant scalar curvature \kahler(CSCK) metric implies asymptotic Chow-stability. The Bergman kernel function, also called the density of states function, is defined as 
$$\rho_k(x)=\sum_{i=1}^{N}\parallel s_i(x)\parallel_h^2, \quad x\in X.$$ We will refer to $\rho_k$ as Bergman kernel for short.
In the compact case $\rho_k$ is very handy in the task of understanding the Bergman embeddings, largely because of the asymptotic formula as $k\to\infty$ proved by Tian \cite{Tian1990On}, Zelditch \cite{Zelditch2000Szego}, Catlin \cite{Catlin}, Lu \cite{Lu2000On}, etc.. 

Another example of the applications of Bergman embeddings and Bergman kernels is the important work of Donaldson-Sun in \cite{Donaldson2014Gromov}, in which the existence of uniform lower bounds for the Bergman kernels of families of polarized projective manifolds was shown. That was then used to prove that the Gromov-Hausdorff limits are normal algebraic varieties. One condition in \cite{Donaldson2014Gromov} is the "non-collapsing" condition, which requires that local volumes should be comparable to that of the Euclidean case. The situations that do not satisfy this condition are the "collapsing" cases. Very rare is known for the convergence of the Bergman embeddings in the collapsing case.

In \cite{sun2024projective}, the author studied the case of stably degenerating sequences of hyperbolic Riemann surfaces, which is a special  collapsing case. It was proven that on each element of the sequence, we can find suitble orthonormal bases of the Bergman spaces defined by the hyperbolic metric so that the induced sequence of Bergman embeddings basically converges to the Bergman embedding of the limit singular space with comlete hyperbolic metric on the regular part. During the convergence process, pairs of bubbles emerge and become pairs of linear $\CP^1$'s attached to the limit variety, which is the reason for using the term 'basically'. 

\

In this article, we explore the high dimensional case. To set the stage, we first recall the definition of the Cheng-Yau metric. 
Let $X$ be a smooth projective manifold of dimension $n$. 
Given a simple normal crossing divisor $D\subset X$ such that $K_X+[D]$ is ample, then it has been shown by Cheng-Yau, Kobayashi, Tian-Yau and Bando(\cite{ChengYau2, Kobayashi,TianYau3,Bando}) that the quasi-projective manifold $\xmd$ admits a unique complete \kae metric $\omega_{\text{KE}}$, known as the Cheng-Yau metric, with finite volume and $\Ric(\omega_{\text{KE}})=-\omega_{\text{KE}}$. Thus $\omega_{\text{KE}}$ defines a Hermitian metric on $K_X$ restricted to $\xmd$.

We consider the question: If we have a sequence of compact \kae manifolds that converges to some variety with Cheng-Yau metric on the regular part. Can we also construct Bergman embeddings that converges to the Bergman embedding of the limit variety? Our current exploration in this direction is in the following context. 
Recall that a degeneration of \kae manifolds is a holomorphic familty $\pi:\xcal\to B$, where $B$ is the unit disk, with the following property: 
\begin{itemize}
    \item The fibers $X_t=\pi^{-1}(t)$ are smooth except for $t=0$
    \item Each $X_t$ for $t\neq 0$ admits a \kae metric.
\end{itemize}
In the case of negative \kae metrics, Tian, in \cite{Tian1993}, proved the following theorem. 
\begin{theorem}[\cite{Tian1993}]\label{thm-tian}
    Let $\pi:\xcal\to B   $ be a degeneration of \kae manifolds $\{X_t,g_{E,t} \}$ with $\Ric(g_{E,t})=-g_{E,t}$. Assume that the total
    space $\xcal$ is smooth and the central fiber $X_0$ is the union of smooth normal crossing hypersurfaces $\{X_{0,i}\}_{1\leq i\leq m}$ in $\xcal$ with ample dualizing 
    line bundle $K_{X_0}$. Assume that no three of the divisors $X_{0,i}, 1\leq i\leq m$ have nonempty intersection. Then the \kae metrics $g_{E,t}$ on $X_t$ converges to a complete Cheng-Yau \kae metric $g_{E,0}$ on $X_0\backslash \text{Sing} (X_0)$ in the sense
    of Cheeger-Gromov: there are an exhaustion of compact subsets $F_\beta \subset\subset X_0\backslash \text{Sing}(X_0)$ and diffeomorphisms $\phi_{\beta,t}$ from $F_\beta$ into $X_t$ satifying:
    \begin{itemize}
        \item[(1)]$X_t\backslash \cup_{\beta=1}^\infty \phi_{\beta,t}(F_\beta)$ consists of a finite union of submanifolds of real codimension 1;
        \item[(2)]for each fixed $\beta$, $\phi^*_{\beta,t}g_{E,t}$ converge to $g_{E,0}$ on $F_\beta$ in $C^2$-topology on the space of Riemannian metrics as $t$ goes to $0$.
    \end{itemize}
\end{theorem}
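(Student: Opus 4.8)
\medskip
\noindent\textbf{Proof proposal.} The plan is to reduce the statement to uniform estimates for a family of complex \moa equations and then pass to the limit via Arzel\`a--Ascoli / Cheeger--Gromov compactness, identifying the limit through the uniqueness of the complete Cheng--Yau metric. Since each smooth fiber $X_t$, $t\neq0$, has ample $K_{X_t}$, the metric $g_{E,t}$, with \kahler form $\omega_{E,t}$, is the unique solution of $\Ric(\omega_{E,t})=-\omega_{E,t}$, $[\omega_{E,t}]=c_1(K_{X_t})$; and by adjunction $K_{\xcal}|_{X_{0,i}}=K_{X_{0,i}}+D_i$ with $D_i=\sum_{j\neq i}X_{0,i}\cap X_{0,j}$, which is ample because $K_{X_0}$ is, so each $X_{0,i}\setminus D_i$ carries its own complete Cheng--Yau metric and their disjoint union is $g_{E,0}$ on $X_0^{\mathrm{reg}}:=X_0\setminus\mathrm{Sing}(X_0)$. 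The first step is to construct, for $|t|$ small, a reference \kahler metric $\omega_{\mathrm{ref},t}$ on $X_t$ such that: it is a genuine smooth metric on the compact manifold $X_t$ for $t\neq0$; it degenerates as $t\to0$ to a metric on $X_0^{\mathrm{reg}}$ uniformly quasi-isometric to $g_{E,0}$; near the double locus it is modeled on the standard family $\{z_1z_2=t\}$, namely a degenerating Poincar\'e annulus in the vanishing-cycle direction times the flat transverse directions; and, crucially, its sectional curvature and its Ricci potential $F_t$ (defined by $\Ric(\omega_{\mathrm{ref},t})=-\omega_{\mathrm{ref},t}+i\ddbar F_t$) are bounded uniformly in $t$. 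The hypothesis that no three of the $X_{0,i}$ meet enters exactly here: it guarantees that these elementary two-branch models suffice, so one never has to construct a model near a triple point.

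With $\omega_{E,t}=\omega_{\mathrm{ref},t}+i\ddbar\varphi_t$, the \kae equation becomes $(\omega_{\mathrm{ref},t}+i\ddbar\varphi_t)^n=e^{\varphi_t+F_t}\,\omega_{\mathrm{ref},t}^n$. The second step is a chain of uniform estimates. The $C^0$ bound is immediate from the maximum principle on the compact manifold $X_t$: at a maximum of $\varphi_t$ one has $e^{\varphi_t+F_t}\le1$, so $\varphi_t\le-\inf F_t$, and symmetrically at a minimum, whence $\|\varphi_t\|_{C^0}\le\|F_t\|_{C^0}$, which is now uniformly bounded by construction. The second-order estimate comes from the Chern--Lu inequality applied to $\mathrm{tr}_{\omega_{E,t}}\omega_{\mathrm{ref},t}$, using the uniform lower bound on the bisectional curvature of $\omega_{\mathrm{ref},t}$ and $\Ric(\omega_{E,t})=-\omega_{E,t}$, closed off against the $C^0$ bound; this yields a uniform two-sided comparison $C\inv\omega_{\mathrm{ref},t}\le\omega_{E,t}\le C\,\omega_{\mathrm{ref},t}$ on all of $X_t$. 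Evans--Krylov and Schauder bootstrapping then give uniform $C^k_{\mathrm{loc}}$ bounds for every $k$, in $\omega_{\mathrm{ref},t}$-coordinates. Because $\pi$ is a smooth submersion over $B$ away from $\mathrm{Sing}(X_0)$, one obtains diffeomorphisms $\phi_{\beta,t}$ from a fixed compact $F_\beta\subset\subset X_0^{\mathrm{reg}}$ into $X_t$ with $\phi_{\beta,t}^*\omega_{\mathrm{ref},t}\to\omega_{\mathrm{ref},0}$ smoothly, and hence $\phi_{\beta,t}^*g_{E,t}$ is uniformly bounded in $C^k(F_\beta)$ with uniform ellipticity.

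The third step is to pass to the limit and identify it. By Arzel\`a--Ascoli (equivalently Cheeger--Gromov, since the metrics are uniformly non-collapsed on the $F_\beta$), a subsequence $\phi_{\beta,t_j}^*g_{E,t_j}$ converges in $C^\infty_{\mathrm{loc}}(X_0^{\mathrm{reg}})$ to a complete \kahler metric $g_\infty$ with $\Ric(g_\infty)=-g_\infty$; the uniform two-sided bound passes to the limit, giving $C\inv\omega_{\mathrm{ref},0}\le g_\infty\le C\,\omega_{\mathrm{ref},0}$, so $g_\infty$ has finite volume and Poincar\'e-type growth near $\mathrm{Sing}(X_0)$ — hence by the uniqueness part of the Cheng--Yau/Kobayashi/Tian--Yau/Bando theorem, $g_\infty=g_{E,0}$. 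Independence of the limit from the subsequence upgrades this to convergence of the whole family, and $C^2$-convergence on each $F_\beta$ is a special case. Finally, statement (1) — that $X_t\setminus\bigcup_\beta\phi_{\beta,t}(F_\beta)$ is a finite union of real-codimension-one submanifolds — is read off from the neck structure of $\omega_{\mathrm{ref},t}$: in each local two-branch model the uncovered region is a shrinking neighborhood of the core circle $\{|z_1|=|z_2|=\sqrt{|t|}\}$ of the degenerating annulus times the transverse slice, which is a real hypersurface in $X_t$.

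The main obstacle is the first step: building $\omega_{\mathrm{ref},t}$ so that it is simultaneously a smooth metric on the compact fibers $X_t$ for $t\neq0$, glues correctly across the finitely many local two-branch models, has uniformly bounded geometry and uniformly bounded Ricci potential, and limits to a metric quasi-isometric to the complete Cheng--Yau metric on $X_0^{\mathrm{reg}}$ — i.e., a single model that captures the transition from the long thin neck on $X_t$ to the complete cusp on $X_0$ uniformly in $t$. Once that is in place, the $C^0$ estimate, the Chern--Lu second-order estimate, and the identification of the Cheeger--Gromov limit by uniqueness are all essentially standard, as is the codimension-one assertion about the uncovered region.
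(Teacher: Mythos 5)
This is Theorem \ref{thm-tian}, which the paper imports verbatim from \cite{Tian1993} and does not prove, so there is no internal proof to compare against. Your outline is, however, an accurate reconstruction of the strategy that Tian (and later Ruan and Zhang) actually use: a Carlson--Griffiths-type family of reference metrics built from the sections $S_i$ cutting out the components $X_{0,i}$ (in the paper's notation $\tilde{\omega}_t=\sqrt{-1}\ddbar\log\chi_tV_t$ with $\chi_t=(\log|t|^2)^2\prod_i\alpha_i^{-2}$), a uniform $C^0$ bound on the potential by the maximum principle, a uniform two-sided metric comparison by a Schwarz-lemma/Chern--Lu argument, local higher-order estimates, and identification of the Cheeger--Gromov limit via the uniqueness of the complete Cheng--Yau metric within the quasi-isometry class of the Poincar\'e-type model. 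Indeed Proposition \ref{prop-ke-t}, quoted from \cite{2015Collapsing}, records exactly the conclusion of your second step. Two small corrections: in the Chern--Lu inequality for $\mathrm{tr}_{\omega_{E,t}}\omega_{\mathrm{ref},t}$ the curvature hypothesis you need on the reference metric is an \emph{upper} bound on its holomorphic bisectional curvature (the hypothesis on the source is a lower Ricci bound, which is free since $\Ric(\omega_{E,t})=-\omega_{E,t}$), not a lower bound as written; and in statement (1) the complement is taken of the union over \emph{all} $\beta$, so what remains is precisely the finite union of core real hypersurfaces $\{|z_0|=|z_1|\}$ near each $D_\alpha$, not a shrinking open neighborhood of them. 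Neither point affects the soundness of the outline; as you correctly identify, the substantive work is the construction of the reference family with uniformly bounded Ricci potential and geometry, which is exactly where the hypothesis that no three components meet is used.
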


Ruan generalized Tian's result in \cite{RuanWD1} by removing the condition that no three of the divisors $X_{0,i}, 1\leq i\leq m$ have nonempty intersection.
Later, in \cite{RuanWD2}, Ruan generalized this theorem to the toroidal case, but we will not discuss that here as we will focus on the setting in theorem \ref{thm-tian}.

In the setting of Tian's theorem, the singularity of the central fiber consists of smooth divisors of the form $D_{i,j}=X_{0,i}\cap X_{0,j}$. For simplicity, we will also denote the divisors by $D_\alpha$, $\alpha\in \Lambda$ when it is not necessary to care about $i$ and $j$.
Then $\text{Sing} (X_0)=\cup_{\alpha\in \Lambda}D_\alpha$. Denote by $|\Lambda|$ the cardinality of $\Lambda$. For each $i$, we denote by $D^i$ the union of that $D_\alpha$'s that are contained in $X_{0,i}$. For simplicity, we also denote by $D=\text{Sing} (X_0)$.

The determinant $\omega_t^n$ of the \kahler form $\omega_t$ corresponding to the \kae metric defines a Hermitian metric $h_t$ on $K_{X_t}$. By abuse of notation, we still use $h_t$ to denote the induced metric on $kK_{X_t}$.
Let $\hcal_{t,k}$ denote the Bergman space consists of $L^2$ sections $s\in H^0(X_t,kK_{X_t})$. Then $\hcal_{t,k}$ is endowed with the Hermitian inner product defined by 
$$\langle s_1,s_2 \rangle=\int_{X_t}(s_1,s_2)_{h_t}\omega_t^n, $$
for $s_1,s_2\in H^0(X_t,kK_{X_t})$. For simplicity, we will denote $K_{X_t}$ by $K_t$. 

Since $X_t$ for $t\neq 0$ is of general type, Siu's invariance of plurigenera theorem from \cite{siu1} implies that the dimension of $H^0(X_t,kK_t)$ is independent of $t$ for $t\neq 0$. We let $N_k$ denote the dimension of $H^0(X_t,kK_t)$. So for $t\neq 0$, $N_k=\dim \hcal_{t,k}$. For $t\neq 0$, $K_\xcal|_{X_t}$ is naturally isomorphic to $K_t$. The isomorphism can be explicitly written. Locally, around a point $p\in X_t$, if $(z_1,\cdots,z_n)$ are local coordinates for $X_t$, then $(t,z_1,\cdots,z_n)$ is local coordinates for $\xcal$. Let $fdz_1\wedge\cdots\wedge dz_n$ be a local section of $K_t$, then $fdt\wedge dz_1\wedge\cdots\wedge dz_n$ is a local section of $K_\xcal|_{X_t}$. Conversely, let $s$ be a local section of $K_\xcal|_{X_t}$, the inverse can be written as $\frac{s}{dt}$. The same holds on the regular part of $X_0$. 

For simplicity, we denote $K_\xcal$ by $L$. Then the push forward sheaf $\pi_* \ocal(kL)$ is a coherent sheaf on $B$, which is locally free on $B^*$. And for $k$ large enough, $\pi_* \ocal(kL)$ is locally free on $B$. For the convenience of the readers, we will include a proof of this claim at the end of the article.

Denote by $\omega_0$ the \kahler form on $X_0'=X_0\backslash \text{Sing}(X_0)$ corresponding to the complete Cheng-Yau metric. Then $\omega_0^n$ defines a Hermitian metric $h_0$ on $L|_{X_0'}$.  
Then let $\hcal_{0,k}$ denote the Bergman space of $(h_0,\omega_0^n)$-$L_2$-integrable holomorphic sections of $L^k|_{X'_{0}}$ and let $\hcal_{0,i,k}$ denote the Bergman space of $L_2$-integrable holomorphic sections of $L^k|_{X_{0,i}\backslash D^i}$.
It will be clear later that each $s\in \hcal_{0,k}$ must vanish along all the $D_\alpha$, $$\hcal_{0,k}=\oplus \hcal_{0,i,k}.$$ 
Moreover, we can identity $\hcal_{0,i,k}$ as the subspace of $\hcal_{0,k}$ consisting of sections that vanish on all $X_{0,j}$ except when $j=i$. Then clearly when $i\neq j$, $\hcal_{0,i,k}$ and $\hcal_{0,j,k}$ are mutually orthogonal.

Let $n_k=\dim \hcal_{0,k}$, $n_{i,k}=\dim \hcal_{0,i,k}$ and $d_{\alpha,k}=\dim H^0(D_\alpha,kL)$. Then we have $$N_k=n_k+\sum_{\alpha\in \Lambda}d_{\alpha,k}.$$
So we can write $\C^{N_k}=\C^{n_k}\oplus \oplus_{\alpha\in \Lambda} \C^{d_{\alpha,k}}$, which induces inclusions of the projective spaces
$$I_0:\CP^{n_k-1}\to \CP^{N_k-1},$$
and $$I_\alpha:\CP^{d_{\alpha,k}-1}\to \CP^{N_k-1}, \quad \alpha\in \Lambda.$$
For $k$ large enough, a basis of $\hcal_{0,k}$ induces an embedding $$\Phi_{0,k}:\tilde{X}_0\to \CP^{n_k-1},$$
where $\tilde{X}_0=\bigsqcup_{1\leq i\leq l}X_{0,i} $ is the desingularization of $X_0$. We denote by $\Phi_{0,i,k}$ the restriction of $\Phi_{0,k}$ to $X_{0,i}$.
So the images of $X_{0,i}$ and $ X_{0,j}$ under the embedding $\Phi_{0,k}$ are disjoint. Therefore the two copies of $D_{i,j}=X_{0,i}\cap X_{0,j}$ in $\tilde{X}_0$ have two embeddings of $D_{i,j}\to \CP^{n_k-1}$ according to the two inclusions. When we write ${D_\alpha}=D_{i,j}$, we denote by $\Phi_{k,\alpha,1}=\Phi_{0,i,k}|_{D_\alpha}$ and $\Phi_{k,\alpha,2}=\Phi_{0,j,k}|_{D_\alpha}$ the two embeddings respectively.
Our main result is the following theorem.
\begin{theorem}\label{thm-main}
    Under the setting of theorem \ref{thm-tian}, when $k$ is large enough, we can find orthonormal basis $\{s_{i,l}^0\}_{1\leq l\leq n_{i,k}}$ for $\hcal_{0,i,k}$ for each $i$, and for any sequence of points in $B^*$ that converges to $0$ we can choose a subsequence $\{t_u\}_{u=0}^{\infty}$ so that we can choose orthonormal basis $\{s_l^{t_u}\}_{1\leq l\leq N_k}$ for $\hcal_{t_u,k}$ satisfying that the images of the Bergman embeddings $\Phi_{t_u,k}:X_{t_u}\to \CP^{N_k-1}$ induced by $\{s_l^{t_u}\}_{1\leq l\leq N_k}$, as $u\to \infty$, converges to a subvariety $Y$ described as following:
    \begin{itemize}
        \item $Y$ has $2|\Lambda|+1$ irreducible components, written as $Y=Y_0\bigcup\cup_{\alpha\in \Lambda}(Y_{\alpha,1}\cup Y_{\alpha,2}) $, such that $Y_0$ is the image of $I_0\circ \Phi_{0,k}$.
        \item For each ${D_\alpha}$, there exists a basis of $H^0({D_\alpha},kL)$ that induces a Kodaira embedding $\Psi_{\alpha,k}:{D_\alpha}\to \CP^{d_{\alpha,k}-1}\subset \CP^{N_k-1}$ such that $Y_{\alpha,1}$ ($Y_{\alpha,2}$ respectively) consists of linear $\CP^1$'s connecting $\Phi_{k,a,1}(p)$($\Phi_{k,a,2}(p)$ respectively) to $ \Psi_{\alpha,k}(p) $ for all $p\in {D_\alpha}$.
    \end{itemize}
\end{theorem}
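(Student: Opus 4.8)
The plan is to do everything on the total space $\xcal$. For $k$ large I would first fix, for each $i$, an arbitrary orthonormal basis $\{s^0_{i,l}\}$ of $\hcal_{0,i,k}$ and, for each $\alpha=D_{i,j}$ and each $m$, a section of $kL|_{X_0}$ supported on $X_{0,i}\cup X_{0,j}$, not vanishing on $D_\alpha$ but vanishing on every other component of $\mathrm{Sing}(X_0)$, whose restriction $\psi^\alpha_m$ to $D_\alpha$ ranges over a basis of $H^0(D_\alpha,kL|_{D_\alpha})$; together these $N_k$ sections form a basis of $H^0(X_0,kL|_{X_0})$, which by Nakayama lifts to a holomorphic frame $\{\sigma^{(0)}_l\}\cup\bigcup_\alpha\{\tau^\alpha_m\}$ of the sheaf $\pi_*\ocal(kL)$, locally free for $k\gg0$, over a disk about $0$. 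I then restrict this frame to each fibre $X_t$ and orthonormalise by Gram--Schmidt, in the order $\sigma$'s then $\tau$'s; this produces the bases $\{s^t_{i,l}\}\cup\{\hat s^{\alpha,t}_m\}$, and the limit variety $Y$ will be read off from the three rates at which frame elements behave in the $L^2(\omega_t)$-norm.

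The algebraic input is routine for $k\gg 0$: $R^1\pi_*\ocal(kL)=0$ (whence local freeness, the constancy $h^0(X_t,kL|_{X_t})=N_k$, surjectivity of restriction to $X_0$, and very-ampleness of all the relevant bundles $kK_{X_t}$, $kL|_{X_{0,i}}$, $kL|_{D_\alpha}$, since $K_{X_t}$, $K_{X_{0,i}}(D^i)$ and their restrictions are ample); $\ocal_\xcal(X_0)\cong\ocal_\xcal$ near $X_0$, so $L|_{X_0}=K_{X_0}$ and $L|_{X_{0,i}}=K_{X_{0,i}}(D^i)$; and a short computation with the Poincar\'e cusp metric identifies $\hcal_{0,i,k}$ with $H^0(X_{0,i},kK_{X_{0,i}}+(k-1)D^i)$, i.e. with the sections of $kL|_{X_{0,i}}$ vanishing on $D^i$. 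Feeding this into the restriction sequence $0\to kL|_{X_0}\otimes\ical_D\to kL|_{X_0}\to\bigoplus_\alpha kL|_{D_\alpha}\to0$ gives $\hcal_{0,k}=\bigoplus_i\hcal_{0,i,k}$ and $N_k=n_k+\sum_\alpha d_{\alpha,k}$, shows every section of $\hcal_{0,k}$ vanishes on $D$, and makes the $\{\sigma^{(0)}_l\}$-part of the chosen basis precisely the sections vanishing on $D$ and restricting to the $s^0_{i,l}$'s.

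Now the analysis. Decompose $X_t$ ($t$ small) into the ``bulk'' $\bigcup_\beta\phi_{\beta,t}(F_\beta)$ of Theorem~\ref{thm-tian} and, for each $\alpha$, a ``collar'' which in coordinates $(x,y,w)$ on $\xcal$ with $xy=t$ ($w$ a coordinate on $D_\alpha$) is a region $\{|x|,|y|\le1-\delta\}$; there $\omega_t$ is modelled on the hyperbolic metric of the annulus $\{|t|<|x|<1\}$ in the $x$-direction together with a convergent family of metrics in the $w$-directions. Laurent-expanding a restricted frame element in $x$, one finds: on the bulk $\sigma^{(0)}_l|_{X_t}\to s^0_{i,l}$ in $C^\infty_{\mathrm{loc}}$ and $\tau^\alpha_m|_{X_t}\to0$, because $\|\sigma^{(0)}_l|_{X_t}\|\to1$ whereas $\|\tau^\alpha_m|_{X_t}\|\asymp|\log|t||^{k-1/2}\to\infty$ (the section $\tau^\alpha_m|_{X_{0,i}}$ is not $L^2$ along $D^i$, and its collar mass grows like $|\log|t||^{2k-1}$). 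Hence the Gram matrix of the restricted frame has its $\sigma$--$\sigma$ block tending to the identity, its $\sigma$--$\tau$ and $\tau^\alpha$--$\tau^{\alpha'}$ ($\alpha\ne\alpha'$) blocks $O(1)$, and $|\log|t||^{-(2k-1)}$ times its $\tau^\alpha$--$\tau^\alpha$ block tending to a positive-definite matrix $M^\alpha$; consequently Gram--Schmidt yields $s^t_{i,l}=\sigma^{(0)}_l|_{X_t}+o(1)\to s^0_{i,l}$, while the $\hat s^{\alpha,t}_m$, rescaled by $|\log|t||^{k-1/2}$, converge on the $\alpha$-collar to a basis $\{\phi^\alpha_m\}$ of $H^0(D_\alpha,kL|_{D_\alpha})$ defining a Kodaira embedding $\Psi_{\alpha,k}$. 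On the bulk the $\hat s^{\alpha,t}_m$ are $o(1)$ while the $s^t_{i,l}$ converge, so $\Phi_{t,k}$ restricted to the bulk tends to $I_0\circ\Phi_{0,k}$: this is $Y_0$. In the $\alpha$-collar on the $X_{0,i}$-side (points $q$ with $y$-coordinate $\eta$, $|t|\ll|\eta|\ll1$) one computes that the $\sigma$-block of $\Phi_{t,k}(q)$ is $\eta$ times a representative of $\Phi_{k,\alpha,1}(w)$ plus lower order, the other bulk and other-collar coordinates are negligible, and the $\alpha$-collar coordinates are $|\log|t||^{-(k-1/2)}$ times a representative of $\Psi_{\alpha,k}(w)$ plus lower order; setting $\lambda:=\eta|\log|t||^{k-1/2}$, which sweeps out all of $\C^*$ and its two ``ends'' as $\eta$ runs over the collar and $t\to0$, gives $\Phi_{t,k}(q)\to$ the point with parameter $\lambda$ on the line joining $\Phi_{k,\alpha,1}(w)$ to $\Psi_{\alpha,k}(w)$. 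Letting $w$ vary over $D_\alpha$ this produces exactly $Y_{\alpha,1}$; the $X_{0,j}$-side gives $Y_{\alpha,2}$ by symmetry, and the waist of the collar maps onto $\Psi_{\alpha,k}(D_\alpha)=Y_{\alpha,1}\cap Y_{\alpha,2}$. Since every point of $X_t$ lies in the bulk or in some collar, these local computations show $\Phi_{t_u,k}(X_{t_u})$ is eventually contained in an arbitrarily small neighbourhood of $Y=Y_0\cup\bigcup_\alpha(Y_{\alpha,1}\cup Y_{\alpha,2})$, and surjectivity of the parametrizations shows every point of $Y$ is a limit of such images; so $\Phi_{t_u,k}(X_{t_u})\to Y$ in the Hausdorff sense. (One passes to a subsequence so that Theorem~\ref{thm-tian} and the above Gram-type limits may be used uniformly.)

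The step I expect to be the real obstacle is the control of $\omega_t$ inside the collars: Theorem~\ref{thm-tian} only gives convergence on compact subsets of $X_0\setminus\mathrm{Sing}(X_0)$, i.e. outside the collars, whereas every scale computation above --- the $|\log|t||^{2k-1}$ growth of $\|\tau^\alpha_m|_{X_t}\|^2$, the $O(1)$-size of the off-diagonal Gram entries, and the pointwise asymptotics along sequences of points penetrating a collar --- requires that in the collar $\omega_t$ be uniformly comparable to, and $C^2$-close to, the grafted model metric. In the negative case these collar estimates are available from the Ricci lower bound and the maximum principle (they are implicit in \cite{Tian1993}; see also \cite{RuanWD1}); isolating them in the form needed here, and then carrying out the routine but lengthy bookkeeping of Laurent coefficients against the model weight, is the technical heart of the proof, the rest being formal.
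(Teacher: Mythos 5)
Your overall architecture coincides with the paper's: an almost\--orthonormal basis split into ``bulk'' sections (restrictions of sections vanishing on $D$) and ``divisor'' sections concentrated on the collars, a three\--scale analysis (bulk, waist, neck), and a final Gram--Schmidt whose effect on the limit is negligible because the Gram matrix tends to the identity. Your neck computation --- two dominant sections, one of size $\eta$ times a representative of $\Phi_{k,\alpha,1}(w)$ and one of size $\big|\log|t|\big|^{-(k-1/2)}$ times a representative of $\Psi_{\alpha,k}(w)$, with $\lambda=\eta\big|\log|t|\big|^{k-1/2}$ sweeping the line --- is exactly the mechanism in the paper, and your growth rate $\big|\log|t|\big|^{2k-1}$ for the collar mass matches the paper's $\big|\log|t|\big|^{2k+1}$ for $(k+1)L$. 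Where you genuinely diverge is the construction of the divisor sections: you take sections of $kL|_{X_0}$ not vanishing on $D_\alpha$, lift them through a local frame of $\pi_*\ocal(kL)$ and let the $L^2(\omega_t)$ mass concentrate at the waist automatically, whereas the paper extends sections from $D_\alpha$ itself to $\xcal$ via Finski's Ohsawa--Takegoshi-type extension theorem (with the norm bound $Ck^{(m-n)/2}$), restricts to $X_t$, cuts off to a neighbourhood of $D_\alpha$ with a function of $\tau$, and corrects by a H\"ormander $\dbar$-solution. Your route buys a cleaner algebraic setup (no cutoff, no $\dbar$-equation, sections holomorphic by construction) at the price of having to control the off-diagonal Gram entries and the bulk/other-collar pointwise contributions of sections that are \emph{not} localized; the paper's route buys localization (Proposition \ref{prop-sqrty} and the $O(e^{-\sqrt y})$ tail bounds) at the price of the extension and $\dbar$ machinery. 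Both are workable; your Cauchy--Schwarz bookkeeping does close, since the normalized cross terms are $o(1)$ once one knows the $L^2$ mass of a bulk section on a $\delta$-collar is $\epsilon(\delta)$.

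Two caveats. First, the collar control you defer to ``estimates implicit in \cite{Tian1993}'' is not available there: Theorem \ref{thm-tian} only governs the non-collapsing part, as you yourself note. What is actually needed --- and what the paper uses --- is Zhang's uniform two-sided bound $C_2^{-1}\tilde\omega_t\le\omega_t\le C_2\tilde\omega_t$ against the explicit reference metric, together with the $C^\infty$ convergence of $\varphi_t\circ P_t$ on the cylinders $G_K\times(U\cap D_\alpha)$ from \cite{2015Collapsing}; without these your $\big|\log|t|\big|^{2k-1}$ rate and the existence of the limiting Gram block $M^\alpha$ are unproved, so you should cite them rather than treat this as folklore. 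Second, the subsequence is not merely for ``uniformity'': Zhang's convergence of the potentials is only subsequential, and the limit measure $d\nu_{\{t_u\},\alpha,k}$ on $D_\alpha$ --- hence $M^\alpha$, hence the basis defining $\Psi_{\alpha,k}$ --- depends on the chosen subsequence. This is precisely why the theorem asserts only the existence of \emph{some} basis of $H^0(D_\alpha,kL)$, and your proof should make that dependence explicit.
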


\begin{remark}
    \begin{itemize}
        \item  We will show that $Y_{\alpha,1}$ and $Y_{\alpha,2}$ are isomorphic to the projective completions of the two normal bundles of ${D_\alpha}$ in $X_{0,i}$ and $X_{0,j}$ respectively. And the production of these bubbles during the process of taking limit will be clear in the process of the proof.
        \item We suspect the necessity of having to take subsequence in the statement of the theorem. But unfortunately we have not been able to make it unnecessary.
    \end{itemize}
   
\end{remark}

During the proof of our main theorem, we need to construct orthonormal bases. So we are close to considering the Bergman kernel on $X_t$. 
Recall that in \cite{ZHOU2024109514}, Zhou showed that under that "non-collapsing" condition and lower bound on Ricci curvature of a sequence of pointed complete polarized \kahler manifolds converges in the Gromov-Hausdorff sense, then a subsequence of the polarization also converges and the Bergman kernels converge to the Bergman kernel of the polarization on the limit space. In our setting, the convergence of the Bergman kernels in the non-collapsed part is clear. What interesting is on the collapsing part. We have the following theorem.
\begin{theorem}\label{thm-2}
    Let $\rho_{t,k}$ denote the Bergman kernel of $\hcal_{t,k}$. And let $$\lambda_u(t,k)=\max_{x\in X_t}\rho_{t,k}(x),$$ and
    $$\lambda_l(t,k)=\min_{x\in X_t}\rho_{t,k}(x).$$
    Then for $k$ large enough, there are constants $c_k>0, c_k'>0$ and $c_k''>0$ such that $$c_k<\frac{\lambda_u(t,k)}{\big|\log |t|  \big|}<c_k',$$
    and  $$\lambda_l(t,k)<c_k''\big|\log |t|  \big|^{-2k+1}a_t^{2k},$$
    where $a_t=\big|\log |t|  \big|$.
\end{theorem}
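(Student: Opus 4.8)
The plan is to understand the Bergman kernel $\rho_{t,k}$ by comparing it, fiber by fiber, with a family of model geometries: away from the singular locus one uses Tian's $C^2$-convergence (Theorem \ref{thm-tian}) to reduce to the Cheng--Yau kernel on $X_0'$, while near each divisor $D_\alpha$ one uses the cusp/neck model where the metric $\omega_t$ on a collar around $D_\alpha$ degenerates. The point is that on such a neck the complete Cheng--Yau metric in the fiber direction looks, in suitable holomorphic coordinates $(w,\zeta)$ with $\zeta$ a normal coordinate to $D_\alpha$, like a punctured-disk cusp of total ``length'' comparable to $a_t = \big|\log|t|\big|$; the relevant one-variable model is the hyperbolic cusp metric on $\{0<|\zeta|<1\}$ truncated at radius $\sim |t|$, whose associated weight is $(\log(1/|\zeta|))^{-2}$ near the puncture. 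So the first step is to set up this collar model precisely and record the asymptotics of its Bergman kernel for $kL$: the weighted $L^2$ norm of the extremal section concentrated near the smallest circle $|\zeta|\sim|t|$.

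For the upper bound $\lambda_u(t,k) < c_k' a_t$, I would produce, for each $t$, a test section peaked deep in one of the necks and compute that its pointwise norm-squared over its $L^2$-norm-squared is $\gtrsim a_t$: the peak section behaves like $\zeta^{?}(\log 1/|\zeta|)^{-k}$ in the neck, and integrating $\|\cdot\|_{h_t}^2\,\omega_t^n$ produces a factor $\int^{1}_{|t|} (\log 1/r)^{-2k}\,\frac{dr}{r(\log 1/r)^{2}} \sim \int_{0}^{a_t} u^{-2k-2}\,du$ which, when one normalizes the section to be $O(1)$ at the innermost circle, yields the claimed linear growth in $a_t$ after accounting for the $2k$ powers in $a_t^{2k}$ coming from the metric weight there. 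The matching lower bound $\lambda_u(t,k) > c_k a_t$ is the genuinely extremal statement: one shows no section can do better than linear growth, by decomposing an arbitrary $s\in\hcal_{t,k}$ into a Fourier series in the angular variable on each neck, using the sub-mean-value / plurisubharmonicity estimate to bound $\rho_{t,k}(x)$ by a local weighted sup, and then arguing that to be square-integrable a section forced to be large at a point in the neck must pay an $L^2$ cost that is at least a constant times (peak value)${}^2/a_t$. This is exactly the place where Tian's decomposition of $X_t$ into $\cup\phi_{\beta,t}(F_\beta)$ plus codimension-one necks gets used quantitatively: on the compact-core part $\rho_{t,k}$ is bounded by the convergence in Theorem \ref{thm-tian}, so the maximum must be attained (up to the constant $c_k$) in a neck, where the model applies.

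For the lower bound $\lambda_l(t,k) < c_k'' a_t^{-2k+1} a_t^{2k}$ — i.e.\ a point where $\rho_{t,k}$ is forced to be small — the idea is to evaluate $\rho_{t,k}$ at the ``center'' of a neck, the circle $|\zeta| = \sqrt{|t|}$ (or wherever the collar pinches), and to show that \emph{every} orthonormal section has small norm there. Since $\rho_{t,k}(x) = \sup\{\|s(x)\|^2 : \|s\|_{L^2}\le 1\}$, I bound $\|s(x)\|^2_{h_t}$ at such a point by the weighted sub-mean-value inequality over a ball of definite $\omega_t$-radius centered there (which sits inside the neck), getting $\|s(x)\|^2 \lesssim a_t^{-?}\|s\|^2_{L^2(\text{ball})}\le a_t^{-?}$; the weight $h_t = \omega_t^n$ contributes the crucial factor: at $|\zeta|=\sqrt{|t|}$ the metric weight is $\sim (\log 1/|\zeta|)^{-2} \sim a_t^{-2}$, so $kL$ carries weight $\sim a_t^{-2k}$, while the volume of the sub-mean-value ball in $\omega_t$ is $\sim a_t^{-1}$ (necks are thin), and balancing these gives the exponent $a_t^{2k}\cdot a_t^{-2k+1} = a_t$... so I must be careful with the bookkeeping of which powers of $a_t$ come from the metric on $kL$ versus from the volume form; the stated exponent $-2k+1$ on $a_t$ together with the compensating $a_t^{2k}$ is the signature of exactly this balance (net growth $a_t$, matching that $\rho_{t,k}$ cannot be uniformly small). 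The main obstacle, and where I expect the real work to be, is making the neck model quantitatively sharp enough — both the construction of the peak section with controlled $L^2$ norm and the matching a priori upper bound on arbitrary sections — uniformly as $t\to 0$, since Theorem \ref{thm-tian} only gives $C^2$-convergence on a \emph{fixed} compact exhaustion and says nothing uniform on the shrinking necks; one must instead invoke the explicit local form of the Cheng--Yau metric near a normal-crossing divisor (its asymptotic equivalence to the model Poincaré-type metric, with controlled error) to get estimates that hold on the whole collar with constants independent of $t$.
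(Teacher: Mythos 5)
Your outline (thick part via Theorem \ref{thm-tian}, an explicit model on each neck, extremal characterization of $\rho_{t,k}$ plus sub-mean-value bounds) is reasonable, but two of its quantitative pivots are wrong and the decisive input is missing. The sign of the metric weight in the neck is reversed in your bookkeeping. Writing $\sigma=-\log|z_1|^2$ and $y=-\frac12\log|t|^2$, Corollary \ref{cor-wt-bound} gives $e^{\phi_t}\asymp\sigma^{2}$ on the half-neck $M<\sigma<y$, so the pointwise weight on $kK_{X_t}$ is $e^{k\phi_t}\asymp\sigma^{2k}$: it \emph{grows} toward the middle circle $|z_0|=|z_1|$, and the fiberwise $L^2$ mass $\int_M^{y}\sigma^{2k}d\sigma\asymp y^{2k+1}$ concentrates there. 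Your model integral $\int_0^{a_t}u^{-2k-2}du$ has the opposite exponent and concentrates at the outer edge of the collar; as a result you evaluate the minimum at $|z_1|=\sqrt{|t|}$, which is exactly where the \emph{maximum} is attained (the ratio of the pointwise weight $y^{2k+2}$ to the $L^2$ mass $y^{2k+1}$ is what produces the factor $\big|\log|t|\big|$). The minimum sits in the transition zone between the Cheng--Yau regime and the collapsing neck, at $|z_1|^2\approx\big|\log|t|\big|^{-2k-1}$, where the outer sections have already decayed like a Cheng--Yau cusp while the inner sections have not yet built up; note also that the $a_t$ entering the sharp bound there is the iterated logarithm $\log\big|\log|t|\big|$ (as used in Section \ref{sec-4}), not $\big|\log|t|\big|$, since with $a_t=\big|\log|t|\big|$ the stated $\lambda_l$ bound degenerates to the trivial consequence $\lambda_l\leq\lambda_u$.

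The second gap is that the two-sided bound $c_k<\lambda_u/\big|\log|t|\big|<c_k'$ cannot be closed from Tian's theorem plus the Poincar\'e-type asymptotics of the Cheng--Yau metric on $X_0'$ alone: the value of $\rho_{t,k}$ at the middle of the neck is governed by the \emph{collapsed} limit. By Proposition \ref{prop-inner} it converges, after dividing by $\big|\log|t|\big|$, to the Bergman kernel of the compact space $(D_\alpha,\vcal,d\nu_{\{t_u\},\alpha,k})$, whose boundedness above and below yields the two constants. The volume form $d\nu$ involves the limit potential $\varphi_0$ of Theorem \ref{the-zhang-3.3}, which exists only along subsequences (``good sequences''), so uniformity in $t$ is obtained by contradiction, extracting a good subsequence from any putative counterexample. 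Your plan neither invokes this collapsing analysis nor addresses the subsequence issue, and the single-puncture cusp is the wrong model for the neck of $X_t$, which is a doubly-pinched strip $x\in(0,1)$ collapsing onto $D_\alpha$. (A minor slip: in your discussion of $\lambda_u$ the roles of the two bounds are interchanged --- the peak section gives the lower bound $\lambda_u>c_k\big|\log|t|\big|$, while the a priori estimate on arbitrary sections gives the upper bound.)
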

The maximum is attained among the points "closest" to $D$. In fact, during the proof of theorem \ref{thm-main}, we will show that fix any smooth 
\kahler metric $g$ on $\xcal$, $\exists C_1>0, C_2>0$ such that when $|t|$ is small enough, we have $$\rho_{t,k}(p)>C_1\big|\log |t|  \big|,$$ if $d(p,D)<C_2\sqrt{|t|}$, where $d$ is the distance function defined by $g$.

\

We talk a little about the proofs. In order to prove the main theorem, we need more than theorem \ref{thm-tian} about the convergence of the \kae metrics, since it is only about the "non-collapsing part". We will also need some results in \cite{2015Collapsing} by Zhang about the "collapsing part". One may worry about the "neck" area between them. It turns out that the "neck" area does not cause much trouble in our current setting. Also, one technical strategy in the proof of our main theorem is to construct almost-orthonormal bases, whose induced Kodaira embedding is easier to describe.
Theorem \ref{thm-2} is then a byproduct of the proof of the main theorem.

\

The structure of this article is as follows. In section \ref{sec-2}, we will first recall the general estimates of the Cheng-Yau metric and of the \kae metrics on general fibers. Then, with the knowledge of these general estimates, we perform some preparatory calculations. Then in section \ref{sec-3}, we construct global sections on general fibers and show that these sections are almost orthonormal. The constructions are divided into two parts: the inner sections and the outer sections whose meaning will be clear after the constructions. Then in section \ref{sec-4}, we show the convergence of the Kodaira embeddings induced by these bases and then prove theorem \ref{thm-main} and theorem \ref{thm-2}.

\textbf{Acknowledgements.} The author would like to thank Professor Song Sun for many helpful discussions.  The author would also like to thank Professor Siarhei Finski for his interests in this work.
\section{Setting-up}\label{sec-2}
\subsection{Cheng-Yau metric.}
    In this subsection, we follow the noations in \cite{sun2024ChengYau}. The notation $D$ has different meaning only in this subsection.

    Let $X$ be a smooth projective manifold of dimension $n$. Given a simple normal crossing divisor $D\subset X$ such that $L_1=K_X+[D]$ is ample, then the quasi-projective manifold $\xmd$ admits a unique complete \kae metric $\omega_{KE}$ with finite volume and $\Ric(\omega_{KE})=-\omega_{KE}$. It is known that $\omega_{KE}$ is of Poincar\'{e} type, namely locally it is quasi-isometric to $$\frac{\sqrt{-1}dz_1\wedge d\bar{z}_1}{|z_1|^2(\log |z_1|^2)^2}+\sum_{i=2}^{n}\sqrt{-1}dz_i\wedge d\bar{z}_i,$$
    where $(z_1,\cdots,z_n)$ are local coordinates around $D$ with $z_1$ a local defining function of $D$. Also, $\omega_{KE}$ extends to be a \kahler current on $X$ and $\omega_{KE}^n$ then defines a singular Hermitian metric on $L_1$.

    Let $\hcal_k$ denote the Bergman space of $L_2$-integrable holomorphic sections of $kL_1$ on $\xmd$. Let $\hcal_{k,2}$ denote the subspace consisting of sections whose vanishing order along $D$ is larger than $1$ and let $\gcal_{k,1}$ denote its orthogonal complement.
    Then we let $\rho_k$ denote the Bergman kernel, $\rho_{k,2}$ the Bergman kernel of $\hcal_{k,2}$ and $\varrho_{k,1}$ the Bergman kernel of $\gcal_{k,1}$. 

    In \cite{sun2024ChengYau}, the author showed the asymptotic formula, around $D$, for $\rho_k$ with the Cheng-Yau metric. In this article, we will not state the main results of that article as we will not explicitly use them. We state one result therein whose idea of proof is used several times in this current article.
	Fix a smooth \kahler metric on $X$, let $d_D(p)$ be the distance from $p$ to $D$ under this smooth metric. Denote by $\tau(p)=-\log d^2_D(p) $. And let $\epsilon(k)$ denote a term that is asymptotically smaller than $k^{-N}$ for any $N$. Then we have the following theorem in the case when $D$ is smooth.
	
    \begin{theorem}[part of theorem 1.3 in \cite{sun2024ChengYau}]
        We have 
        $$\rho_{k,2}=\epsilon(k)\rho_{k},$$
        for the points where $\tau>2k$.
    \end{theorem}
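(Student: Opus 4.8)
The plan is to localize the statement near $D$ and reduce it to an explicit one–variable model, in which the extra vanishing of sections in $\hcal_{k,2}$ produces a factor $|z_1(p)|^2=e^{-\tau(p)}$ relative to the generic order–one sections, and the hypothesis $\tau(p)>2k$ then turns this into exponential decay in $k$. Since $\tau(p)>2k$ forces $d_D(p)<e^{-k}$, for $k$ large every such $p$ lies in a fixed tubular neighbourhood $\ucal$ of $D$ where $\omega_{KE}$ is quasi-isometric to $\frac{\sqrt{-1}\,dz_1\wedge d\bar z_1}{|z_1|^2(\log|z_1|^2)^2}+\sum_{j=2}^n\sqrt{-1}\,dz_j\wedge d\bar z_j$ and, in the logarithmic frame $E=(\tfrac{dz_1}{z_1}\wedge dz_2\wedge\cdots\wedge dz_n)^{\otimes k}$ of $kL_1$, the metric $h$ coming from $\omega_{KE}^n$ satisfies $\|E\|_h^2\asymp\tau^k$. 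Hence a holomorphic section near $D$ is $g\,E$ with $g$ holomorphic, its vanishing order along $D$ is $\operatorname{ord}_{z_1}g$, and up to a bounded positive factor $\|g\,E\|_h^2\,\omega_{KE}^n\asymp|g|^2\,\tau^{k-2}|z_1|^{-2}\,dV$; in particular the $z_1^0$-mode is never square–integrable, so every element of $\hcal_k$ (a fortiori of $\hcal_{k,2}$) already vanishes along $D$, which is the ambient reason the statement is nontrivial.

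For the upper bound, if $s\in\hcal_{k,2}$ has $L^2$-norm $1$ then $\|s|_\ucal\|_{L^2(\ucal)}\le 1$, so $\|s(p)\|_h^2\le\rho_{k,2}^{\ucal}(p)$, the Bergman kernel of the $L^2$-holomorphic sections of $kL_1$ on $\ucal$ that vanish to order $\ge2$ along $D\cap\ucal$. Writing such a section as $g\,E$ with $g=\sum_{m\ge2}g_m(z')z_1^m$ and using that the transverse metric is flat and fixed, a polar–coordinate integration in $z_1$ gives $\|g\,E\|_{L^2(\ucal)}^2\asymp\Gamma(k-1)\sum_{m\ge2}\|g_m\|_{L^2}^2\,m^{-(k-1)}$, while $\|g\,E(p)\|_h^2=\tau(p)^k\,|\sum_m g_m(z'(p))z_1(p)^m|^2$; applying Cauchy–Schwarz with these weights, and bounding $|g_m(z'(p))|^2$ by $\|g_m\|^2_{L^2}$ times the (fixed, $k$–independent) Bergman kernel of the transverse polydisc, yields $\rho_{k,2}^{\ucal}(p)\le C\,\tau(p)^k\Gamma(k-1)^{-1}\sum_{m\ge2}m^{k-1}e^{-m\tau(p)}$.

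For the lower bound one produces a genuine global test section of order exactly one: take $s=\mathbf 1_D\cdot t$, where $\mathbf 1_D$ is the canonical section of $\ocal_X(D)$ and $t\in H^0(X,kK_X+(k-1)D)$ is chosen, for $k$ large, so that $t|_D$ is a peak section concentrated near $z'(p)$ on the smooth compact divisor $D$. The same computation gives $\|s(p)\|^2_h/\|s\|^2_{L^2}\ge c\,\tau(p)^k\Gamma(k-1)^{-1}e^{-\tau(p)}$ uniformly for $\tau(p)>2k$, hence $\rho_k(p)\ge c\,\tau(p)^k\Gamma(k-1)^{-1}e^{-\tau(p)}$. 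Dividing the two estimates and using $\log m\le m-1$ together with $\tau(p)>2k$,
\[
\frac{\rho_{k,2}(p)}{\rho_k(p)}\le\frac{C}{c}\sum_{m\ge2}m^{k-1}e^{-(m-1)\tau(p)}\le\frac{C}{c}\sum_{m\ge2}e^{-(m-1)(k+1)}\le\frac{2C}{c}\,e^{-(k+1)},
\]
which decays faster than any negative power of $k$, i.e. is $\epsilon(k)$.

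The main obstacle is the faithful passage from the actual Cheng–Yau data to the one–variable model: the quasi-isometry of $\omega_{KE}$ and of $h$ to the model costs a factor of order $C^{\pm k}$ over $\ucal$, while on the region $\{\tau>2k\}$ the finer Poincaré-type asymptotics established in \cite{sun2024ChengYau} reduce this distortion to $1+o(1)$. One has to verify that these discrepancies, along with the $\Gamma(k-1)$ normalizations, the peak–section losses on $D$ and the transverse Bergman kernels, never swallow the exponential gain $e^{-(k+1)}$ — which is precisely what the comparatively generous hypothesis $\tau>2k$ (rather than the bare $\tau>(\log 2)k$ at which the tail $\sum_{m\ge2}m^{k-1}e^{-(m-1)\tau}$ ceases to be geometrically small) is designed to guarantee.
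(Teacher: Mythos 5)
First, a point of reference: the paper does not prove this statement. It is quoted from \cite{sun2024ChengYau} (part of Theorem 1.3 there), and the present article explicitly declines to restate or reprove the results of that paper, so there is no in-paper proof to compare yours against. Judged on its own, your overall strategy --- localize to the Poincar\'e model near $D$, expand a section in the log frame as $g=\sum_m g_m(z')z_1^m$, evaluate each radial mode integral as a Gamma function, and control the $m\ge 2$ tail against the $m=1$ mode via Cauchy--Schwarz together with a global test section of vanishing order exactly one --- is the natural one, and it is consistent with the computational framework Section \ref{sec-2} of this paper sets up (the integrals $\int e^{-i\sigma+2k\log\sigma}\,d\sigma$ and Lemma \ref{lem-concave}).

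There is, however, a concrete quantitative error that breaks your final chain of inequalities. Since $\omega_{KE}^n\asymp |z_1|^{-2}\sigma^{-2}\,dV$ with $\sigma=-\log|z_1|^2$, the log frame $e=\tfrac{dz_1}{z_1}\wedge dz_2\wedge\cdots\wedge dz_n$ of $L_1$ satisfies $\|e\|_h^2\asymp\sigma^2$, so for $kL_1$ you get $\|E\|_h^2\asymp\tau^{2k}$, not $\tau^{k}$; correspondingly the $m$-th mode carries $L^2$ weight $\asymp\Gamma(2k+O(1))\,m^{-(2k+O(1))}$ (this matches the weight $(\log|z_1|^2)^{2k}$ appearing throughout Section \ref{sec-2}), and the quantity you must bound is $\sum_{m\ge2}m^{2k+O(1)}e^{-(m-1)\tau}$. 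With this corrected exponent your estimate via $\log m\le m-1$ gives only $m^{2k+1}e^{-(m-1)\tau}\le e^{(m-1)(2k+1-\tau)}$, which for $\tau$ just above $2k$ is not small --- the crude bound no longer beats the weight. The argument is reparable: using instead $\log m\le(m-1)\log 2$ for $m\ge2$, one gets $m^{2k+1}e^{-(m-1)\tau}\le e^{(m-1)((2k+1)\log 2-2k)}\le e^{-(m-1)(2(1-\log 2)k-1)}$, so the tail is $O(e^{-ck})$ and hence $\epsilon(k)$; but this shows the threshold $\tau>2k$ is much tighter than your closing remark suggests (it is governed by $\log 2<1$, not by a large margin over $(\log 2)k$). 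Separately, your appeal to the ``finer Poincar\'e-type asymptotics'' of \cite{sun2024ChengYau} to tame the $C^{\pm k}$ quasi-isometry distortion is circular in a blind proof; the honest observation is that the fiberwise weight $\|E(p)\|_h^2$ cancels exactly in the ratio $\rho_{k,2}(p)/\rho_k(p)$ at a fixed point, and the remaining distortions (transverse Bergman kernels, the outer region in the lower bound) contribute at most polynomially in $k$, which an exponentially small tail absorbs.
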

    This theorem implies that when it comes very close to $D$, $\rho_{k+1}$ is dominated by $\varrho_{k,1}$. This idea is used in our construction of orthonormal bases for $\hcal_{t,k}$.

\subsection{General estimates on generic fibers}
We follow the constructions in \cite{2015Collapsing}.

The assumption in the main theorem implies that $K_\xcal$ is ample on $\xcal$. So we can choose and fix a smooth Hermitian metric $h$ on $K_\xcal$ with positive curvature. And we denote by $\omega=\frac{\sqrt{-1}}{2\pi}\Theta_h$ the \kahler form. We also have a smooth volume form $V$ on $\xcal$. Then for $t\neq 0$, we have volume form $$V_t=V\otimes (\sqrt{-1}dt\wedge d\bar{t})^{-1}.$$


For each $i$, we fix a section $S_i\in H^0(\xcal,[X_{0,i}])$ such that $S_{i}$ vanishes on $X_{0,i}$. We also requires that the product $\prod_{i=1}^{m}S_i=t$. 
 Let $\parallel \cdot \parallel_i$ be a smooth Hermitian metric on $[X_{0,i}]\to \xcal$. By multiplying certain constants if necessary, we can assume that $$\parallel S_i \parallel_i\leq \epsilon \ll 1. $$

Let
$$\alpha_i=\log \parallel S_i\parallel_i^2,\quad \quad \chi_t=(\log |t|^2)^2\prod_{i=1}^{m}\alpha_i^{-2}, $$
and $$\tilde{\omega}_t=\sqrt{-1}\ddbar \log \chi_t V_t.$$
Then Zhang proved the following proposition.
\begin{proposition}[propostion 3.1 in \cite{2015Collapsing}]\label{prop-ke-t}
    Let $\varphi_t$ be the unique solution of \moa equation:
    \begin{equation}\label{eqn-ma-t}
        (\tilde{\omega}_t+\sqrt{-1}\ddbar \varphi_t)^n=e^{\varphi_t}\chi_t V_t,
    \end{equation}
    and $\omega_t=\tilde{\omega}_t+\sqrt{-1}\ddbar \varphi_t$ be the \kae metric on $X_t$. Then $$|\varphi_t|\leq C_1, \quad \text{and  } C^{-1}_2\tilde{\omega}_t\leq \omega_t\leq C_2 \tilde{\omega}_t, $$
    for constants $C_1>0$ and $C_2>0$ independent of $t$.
\end{proposition}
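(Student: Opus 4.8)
The plan is to treat \eqref{eqn-ma-t} as a complex \moa equation of Aubin--Yau type on the \emph{compact} \kahler manifold $X_t$ ($t\neq0$) and to keep careful track of the dependence of every constant on $t$. First I would check that $\tilde\omega_t=\sqrt{-1}\ddbar\log(\chi_tV_t)$ is a genuine smooth positive \kahler form on $X_t$. Expanding $\log(\chi_tV_t)=2\log\big|\log|t|^2\big|-2\sum_i\log|\alpha_i|+\log V_t$ with $\alpha_i=\log\|S_i\|_i^2$, the terms $-2\sqrt{-1}\ddbar\log(-\alpha_i)$ contribute, near the locus where $\|S_i\|_i$ is small, the Poincar\'e-cusp piece $\asymp \sqrt{-1}\,\partial\|S_i\|_i^2\wedge\bar\partial\|S_i\|_i^2/(\|S_i\|_i^4\alpha_i^2)$ in the $S_i$-direction, while the remaining terms form a uniformly bounded smooth form; so for $\epsilon\ll1$ one gets $\tilde\omega_t>0$ on $X_t$, and along the vanishing necks of $X_t$ near each $D_{i,j}=X_{0,i}\cap X_{0,j}$ (where in suitable local coordinates $z_1z_2=t$) $\tilde\omega_t$ is, after the obvious rescaling, uniformly quasi-isometric to a fixed model cylinder metric. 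Hence $(X_t,\tilde\omega_t)$ has geometry bounded uniformly in $t$ — at unit scale away from the necks, at scale $\sqrt{|t|}$ inside them. Differentiating \eqref{eqn-ma-t} gives $\Ric(\omega_t)=-\sqrt{-1}\ddbar\varphi_t-\tilde\omega_t=-\omega_t$, so any solution is \kae, and existence and uniqueness of a smooth $\varphi_t$ is the Aubin--Yau theorem on $(X_t,\tilde\omega_t)$ (the sign of the right side is the favorable one).

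\textbf{The $C^0$ estimate.} Since $X_t$ is compact, $\varphi_t$ attains its maximum at a point $p$; there $\sqrt{-1}\ddbar\varphi_t(p)\le0$, so $\omega_t^n(p)\le\tilde\omega_t^n(p)$, and \eqref{eqn-ma-t} gives $e^{\varphi_t(p)}\le\big(\tilde\omega_t^n/(\chi_tV_t)\big)(p)$; thus $\sup_{X_t}\varphi_t\le\sup_{X_t}F_t$ with $F_t:=\log\big(\tilde\omega_t^n/(\chi_tV_t)\big)$, and symmetrically $\inf_{X_t}\varphi_t\ge\inf_{X_t}F_t$. Everything then reduces to proving $\|F_t\|_{C^0(X_t)}\le C$ uniformly in $t$. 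This is the crux: $\tilde\omega_t$ is built precisely to be (a good approximation of) the Ricci form of $\chi_tV_t$, and one checks in the local normal-crossing coordinates — using $\prod_iS_i=t$ to trade the $\log\|S_i\|_i$'s against $\log|t|$ — that the competing logarithmic factors cancel so that $\tilde\omega_t^n/(\chi_tV_t)$ is pinched between two positive constants independent of $t$, uniformly over $X_t$ including the necks (this is Zhang's computation; see \cite{2015Collapsing}).

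\textbf{The second-order estimate.} With the $C^0$ bound in hand I would run the Aubin--Yau--Siu Laplacian estimate: put $u=\log\operatorname{tr}_{\tilde\omega_t}\omega_t$ and compute $\Delta_{\omega_t}\!\big(u-A\varphi_t\big)$ for a large constant $A$, using $\Delta_{\omega_t}\varphi_t=n-\operatorname{tr}_{\omega_t}\tilde\omega_t$. Combining the standard inequality with $\Ric(\omega_t)=-\omega_t$, the $C^0$ bound, and a \emph{uniform} lower bound on the bisectional curvature of $\tilde\omega_t$ (from the bounded geometry above), one gets $\Delta_{\omega_t}\!\big(u-A\varphi_t\big)\ge\operatorname{tr}_{\omega_t}\tilde\omega_t-C$; evaluating at the maximum of $u-A\varphi_t$ — legitimate since $X_t$ is compact, so no cutoff is needed — yields $\operatorname{tr}_{\omega_t}\tilde\omega_t\le C$ there. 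Feeding in $\omega_t^n/\tilde\omega_t^n=e^{\varphi_t}\chi_tV_t/\tilde\omega_t^n$, which is pinched between positive constants by the $C^0$ step, the arithmetic--geometric mean bounds all eigenvalues of $\omega_t$ relative to $\tilde\omega_t$ from above and below, i.e.\ $C_2^{-1}\tilde\omega_t\le\omega_t\le C_2\tilde\omega_t$ with $C_2$ independent of $t$.

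\textbf{Expected main obstacle.} The only genuine difficulty is \emph{uniformity in $t$}: both the pinching of $F_t$ and the lower bound on the bisectional curvature of $\tilde\omega_t$ must be shown to persist as $t\to0$, precisely along the collapsing necks near each $D_{i,j}$, where $\tilde\omega_t$ interpolates between the fixed Poincar\'e-cusp model and its smoothing at scale $\sqrt{|t|}$. This is handled by the explicit local coordinates in which $t=z_1z_2$, where the model computation is manifestly scale-invariant; away from the necks $\tilde\omega_t$ is a bounded perturbation of the fixed complete geometry on $X_0\setminus D$, so those regions are routine.
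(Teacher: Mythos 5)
This proposition is imported verbatim from Zhang \cite{2015Collapsing} (his Proposition 3.1), and the present paper gives no proof of its own; your sketch --- Aubin--Yau existence on the compact fiber, the maximum-principle $C^0$ bound reduced to the uniform pinching of $\tilde\omega_t^n/(\chi_tV_t)$ via the cancellation $\prod_iS_i=t$, and Yau's Laplacian estimate run with a $t$-uniform curvature bound on the collapsing reference metric $\tilde\omega_t$ --- is precisely the route of the cited proof. The outline is correct, with the understood caveat that the two uniformity checks you flag as the main obstacle (pinching of $F_t$ and bounded geometry of $\tilde\omega_t$ across the necks) are the actual content of Zhang's local computation rather than steps you have carried out.
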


For a fixed component $D_\alpha$ of $\text{Sing}(X_0)$, $D_\alpha=X_{0,a}\cap X_{0,b}$ for some $a, b$. For simplicity, we can assume that $a=1, b=2$.
Around any point $p\in D_\alpha$, we can choose a neighborhood $U$ with local coordinates $(z_0,z_1,\cdots,z_n)$ so that $t=z_0z_1$ and $X_{0,1}\cap U=\{z_0=0\}$ and $X_{0,2}\cap U=\{z_1=0\}$. So $(z_2,\cdots,z_n)$ are local coordinates for $D_\alpha$ and $(z_1,\cdots,z_n)$ are local coordinates for $X_{0,1}$. 
Of course the choice of such coordinates is not unique. We can require that $\parallel dz_0(p)\parallel=\parallel dz_1(p)\parallel$ under the norm defined by $\omega$. We will later refer to such a neighborhood  together such coordinates an appropriate coordinates patch.
 So, since $D_\alpha$ is compact, we can cover $D_\alpha$ with finite such coordinates patchs $\{U_j\}$ and there exists a constant $C$ such that when $U_i\cap U_j\neq \emptyset$ with coordinates $(z_0,z_1,\cdots,z_n)$ and $(z'_0,z'_1,\cdots,z'_n)$ respectively, we have $$C^{-1}|z'_0|<|z_0|<C|z'_0|,$$ hence also $C^{-1}|z'_1|<|z_1|<C|z'_1|$. We can also modify the choice of $(z_2,\cdots,z_n)$ so that there exists a constant $C'$ such that $$\frac{1}{C'}<\frac{V}{(\sqrt{-1})^{n+1} dz_0\wedge d\bar{z}_0\wedge\cdots dz_n\wedge d\bar{z}_n }<C',$$
on every $U_j$. We denote by $\mu_\alpha=\omega^{n-1}$ the volume form on $D_\alpha$. Then there exists a constant $C''$ such that $$\frac{1}{C''}<\frac{\mu_\alpha}{(\sqrt{-1})^{n-1} dz_2\wedge d\bar{z}_2\wedge\cdots dz_n\wedge d\bar{z}_n }<C'',$$
on every $U_j\cap D_\alpha$. We can furthermore require that, under the Hermitian inner product defined by $\omega$, $\langle dz_i(p),dz_j(p) \rangle =\delta_j^i$ for $i\geq 2,j\geq 2$. On $X_{0,1}$ (or similarly on $X_{0,2}$), we can choose a defining function $w_1$ of $D_\alpha$ so that $(w_1,w_2=z_2,\cdots,w_n=z_n)$ are local coordinates and $\omega(p)=\sum \delta_j^i dw_i\wedge d\bar{w}_j$. By a compactness argument, one easily sees that there is a positive lower bound for the angle between $\frac{\partial}{\partial z_1}$ and $\frac{\partial}{\partial w_1}$ on $U_j\cap X_{0,1}$ valid for every $U_j$. The same holds for $X_{0,2}$.

For simplicity, we can also make $U$ a polydisc by requiring $|z_i|<R_i, i=0,\cdots, n$. 

Since $dt=z_1dz_0+z_0dz_1$, we have $dt\wedge dz_1=z_1dz_0\wedge dz_1$ and $dt\wedge dz_0=-z_0dz_0\wedge dz_1$. So for $t\neq 0$, 
$$\frac{(\sqrt{-1})^{n+1} dz_0\wedge d\bar{z}_0\wedge\cdots dz_n\wedge d\bar{z}_n}{\sqrt{-1}dt\wedge d\bar{t}}=\frac{(\sqrt{-1})^{n}}{|z_1|^2} dz_1\wedge d\bar{z}_1\wedge\cdots dz_n\wedge d\bar{z}_n,$$
and also$$\frac{ dz_0\wedge d\bar{z}_0\wedge\cdots dz_n\wedge d\bar{z}_n}{dt\wedge d\bar{t}}=\frac{dz_0\wedge d\bar{z}_0}{|z_0|^2} \wedge dz_2\wedge d\bar{z}_2\wedge\cdots dz_n\wedge d\bar{z}_n.$$
Therefore there exists another constant $C$ such that
$$\frac{1}{C}<\frac{|z_1|^2V_t}{(\sqrt{-1})^{n} dz_1\wedge d\bar{z}_1\wedge\cdots dz_n\wedge d\bar{z}_n }<C,$$
on each $U_i$. We denote by 
$$\tilde{\chi}_t=\frac{(\log |t|^2)^2}{\alpha_1^2\alpha_2^2}. $$
Then there is a constant $C'$ such that 
$$C^{'-1}<\frac{\chi_t}{\tilde{\chi}_t}<C',$$on each $U_i$.

We denote by $\sigma=-\log |z_1|^2$, then $\alpha_2=\psi_2-\sigma$ and $\alpha_1=\log |t|^2+\sigma+\psi_1$ for some bounded functions $\psi_1,\psi_2$. Then $\tilde{\chi}_t^{-1}=(\frac{\alpha_1}{\log |t|^2})^2(\psi_2-\sigma)^2$. So we have the following proposition.
\begin{proposition}
    There exist positive constants $t_0$, $M$ and $C_3$ such that for $0<|t|<t_0$, $M<\sigma<\frac{-1}{2}\log |t|^2$, we have 
    $$C^{-1}_3<\frac{|z_1|^2\sigma^2\chi_t V_t}{(\sqrt{-1})^{n} dz_1\wedge d\bar{z}_1\wedge\cdots dz_n\wedge d\bar{z}_n}<C_3,$$on each $U_i$.
\end{proposition}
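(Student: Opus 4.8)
The plan is to unwind the definitions and track the elementary factors. Recall that on an appropriate coordinate patch $U_i$ we have established the comparison
\[
\frac{1}{C}<\frac{|z_1|^2 V_t}{(\sqrt{-1})^{n} dz_1\wedge d\bar{z}_1\wedge\cdots\wedge dz_n\wedge d\bar{z}_n}<C,
\]
so it suffices to control $\sigma^2 \chi_t$, or equivalently (using the already-proved two-sided bound $C^{'-1}<\chi_t/\tilde\chi_t<C'$) the quantity $\sigma^2\tilde\chi_t$. Writing $\tilde\chi_t^{-1}=\big(\tfrac{\alpha_1}{\log|t|^2}\big)^2(\psi_2-\sigma)^2$ as in the line preceding the statement, we get
\[
\sigma^2\tilde\chi_t=\frac{\sigma^2}{(\psi_2-\sigma)^2}\cdot\Big(\frac{\log|t|^2}{\alpha_1}\Big)^2,
\]
so the whole problem reduces to showing both factors on the right are bounded above and below by positive constants, uniformly in $t$ (small) and in $\sigma$ in the prescribed range $M<\sigma<-\tfrac12\log|t|^2$.

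First I would handle the factor $\sigma^2/(\psi_2-\sigma)^2$. Since $\psi_2$ is a bounded function, choosing $M$ larger than, say, twice $\sup|\psi_2|$ forces $|\psi_2-\sigma|$ to lie between $\tfrac12\sigma$ and $2\sigma$ for $\sigma>M$, hence $\sigma^2/(\psi_2-\sigma)^2\in[\tfrac14,4]$. Next I would handle $(\log|t|^2/\alpha_1)^2$, i.e. bound $\alpha_1/\log|t|^2$ away from $0$ and $\infty$. We have $\alpha_1=\log|t|^2+\sigma+\psi_1$ with $\psi_1$ bounded. Since $\log|t|^2<0$ and $0<\sigma<-\tfrac12\log|t|^2$, we get $\log|t|^2 + \sigma \in (\tfrac12\log|t|^2,\ \log|t|^2 + \tfrac12(-\log|t|^2)) = (\tfrac12\log|t|^2, 0)$; more carefully $\log|t|^2+\sigma > \log|t|^2$ and $\log|t|^2 + \sigma < \tfrac12\log|t|^2 < 0$. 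Dividing by the negative number $\log|t|^2$ and adding the bounded term $\psi_1/\log|t|^2$ (which tends to $0$ as $t\to 0$), one sees $\alpha_1/\log|t|^2$ lies in a compact subinterval of $(0,\infty)$ once $|t|$ is small enough — say in $[\tfrac14,2]$ for $0<|t|<t_0$ with $t_0$ chosen small relative to $\sup|\psi_1|$. Combining the two factor bounds and folding in the constants $C$, $C'$, $C''$ from the patch estimates produces the desired $C_3$.

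The only mild subtlety — and the place I would be most careful — is the bookkeeping of how $M$ and $t_0$ must be chosen in terms of the (uniform, since $D_\alpha$ is compact and only finitely many patches are used) sup-norm bounds on $\psi_1,\psi_2$: $M$ must dominate $\psi_2$ to make the first factor harmless, while $t_0$ must be small enough that $\psi_1/\log|t|^2$ and the interaction between the $\sigma$-range constraint and $\log|t|^2$ are controlled. Since all the relevant bounded functions come from finitely many patches covering the compact divisor $D_\alpha$, such uniform choices exist, and no analytic input beyond Proposition \ref{prop-ke-t}'s setup and the preceding elementary comparisons is needed; the argument is purely a chain of two-sided estimates.
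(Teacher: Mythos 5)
Your proposal is correct and follows essentially the same route as the paper: the statement is deduced directly from the immediately preceding comparisons ($|z_1|^2V_t$ versus the Euclidean volume form, $\chi_t$ versus $\tilde\chi_t$, and the identity $\tilde\chi_t^{-1}=(\alpha_1/\log|t|^2)^2(\psi_2-\sigma)^2$), with the remaining content being exactly the two-sided bounds on $\sigma/(\psi_2-\sigma)$ and $\log|t|^2/\alpha_1$ that you supply. The only blemish is the momentary miscomputation of the interval for $\log|t|^2+\sigma$, which you immediately correct, so the argument stands.
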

Then together with proposition \ref{prop-ke-t}, we have the following.
\begin{corollary}\label{cor-wt-bound}
    There exist positive constants $t_0$, $M$ and $C_4$ such that for $0<|t|<t_0$, $M<\sigma<\frac{-1}{2}\log |t|^2$, we have 
    $$C^{-1}_4<\frac{|z_1|^2\sigma^2\omega_t^n}{(\sqrt{-1})^{n} dz_1\wedge d\bar{z}_1\wedge\cdots dz_n\wedge d\bar{z}_n}<C_4,$$
    where $\omega_t$ is the \kae metric on $X_t$, on each $U_i$.
\end{corollary}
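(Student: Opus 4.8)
The plan is to derive Corollary~\ref{cor-wt-bound} by chaining the preceding Proposition with Proposition~\ref{prop-ke-t}. Proposition~\ref{prop-ke-t} gives the two-sided comparison $C_2^{-1}\tilde\omega_t\le\omega_t\le C_2\tilde\omega_t$, where $\tilde\omega_t=\sqrt{-1}\ddbar\log\chi_t V_t$. Taking $n$-th exterior powers of this comparison of positive $(1,1)$-forms yields $C_2^{-n}\tilde\omega_t^n\le\omega_t^n\le C_2^n\tilde\omega_t^n$ as an inequality of volume forms (here I use that for positive forms $\beta\le\gamma$ implies $\beta^n\le\gamma^n$, applied to both inequalities). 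So it suffices to replace $\tilde\omega_t^n$ by $\chi_t V_t$.

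Next I would use the \moa equation \eqref{eqn-ma-t}, $\omega_t^n=e^{\varphi_t}\chi_t V_t$, together with the uniform bound $|\varphi_t|\le C_1$ from Proposition~\ref{prop-ke-t}. This immediately gives $e^{-C_1}\chi_t V_t\le\omega_t^n\le e^{C_1}\chi_t V_t$, so in fact I do not even need the exterior-power step — the \moa equation directly controls $\omega_t^n$ in terms of $\chi_t V_t$ with $t$-independent constants. Then the previous Proposition states that on each $U_i$, for $0<|t|<t_0$ and $M<\sigma<-\tfrac12\log|t|^2$,
$$C_3^{-1}<\frac{|z_1|^2\sigma^2\chi_t V_t}{(\sqrt{-1})^n dz_1\wedge d\bar z_1\wedge\cdots\wedge dz_n\wedge d\bar z_n}<C_3.$$
Multiplying through by the ratio $\omega_t^n/(\chi_t V_t)\in[e^{-C_1},e^{C_1}]$ gives exactly the claimed inequality with $C_4=e^{C_1}C_3$.

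Concretely, the steps in order: (1) invoke \eqref{eqn-ma-t} and $|\varphi_t|\le C_1$ to get $e^{-C_1}\le\omega_t^n/(\chi_t V_t)\le e^{C_1}$ pointwise on $X_t$; (2) on the region $M<\sigma<-\tfrac12\log|t|^2$ inside each $U_i$, invoke the previous Proposition for the two-sided bound on $|z_1|^2\sigma^2\chi_t V_t$ against the coordinate volume form; (3) multiply the two estimates and set $C_4=e^{C_1}C_3$, keeping the same $t_0$ and $M$ (shrinking $t_0$ if the two Propositions require slightly different thresholds).

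There is essentially no obstacle here: this is a bookkeeping corollary that merely records the combination of the $C^0$-estimate on the \moa potential with the explicit volume-form asymptotics, and the only mild care needed is to take the intersection of the domains of validity $0<|t|<t_0$ coming from the two inputs and to note the constants are genuinely independent of $t$ because both $C_1$ and $C_3$ are.
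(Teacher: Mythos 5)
Your proof is correct and is essentially the paper's intended argument: the paper gives no written proof beyond ``together with Proposition~\ref{prop-ke-t}'', and the combination of the \moa equation \eqref{eqn-ma-t} with the uniform bound $|\varphi_t|\le C_1$ and the preceding proposition's two-sided estimate on $|z_1|^2\sigma^2\chi_t V_t$ is exactly that. Your first paragraph on exterior powers is an unnecessary detour (as you yourself note), but the core chain of estimates and the constant $C_4=e^{C_1}C_3$ are right.
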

We can write $$\omega_t^n=e^{-\phi_t}\frac{(\sqrt{-1})^{n}}{|z_1|^2}dz_1\wedge d\bar{z}_1\wedge\cdots dz_n\wedge d\bar{z}_n.$$
Let $s=f(dz_0\wedge\cdots\wedge dz_n)^{\otimes k}$ be a local section of $K_\xcal^{k}$. 
Then we have a section $$s_t\triangleq \frac{s}{(dt)^{\otimes k}}=\frac{f}{z_1^{ k}}(dz_1\wedge\cdots\wedge dz_n)^{\otimes k}$$
in $H^0(X_t,kK_{X_t})$.

\textbf{Terminology.} For simplicity, we will call $s_t$ the restriction of $s$ to $X_t$.

So the pointwise norm of $s_t$ defined by the \kae metric on $X_t$ is
$$\parallel s_t\parallel^2_{\text{KE}}=|f|^2e^{k\phi_t}.$$
And so its local $L_2$ norm is given by 
$$\int |f|^2e^{(k-1)\phi_t}\frac{(\sqrt{-1})^{n}}{|z_1|^2}dz_1\wedge d\bar{z}_1\wedge\cdots dz_n\wedge d\bar{z}_n.$$
Therefore, by corollary \ref{cor-wt-bound}, we need to estimate integrals of the form
$$\int |f(z_1)|^2(\log |z_1|^2)^{2k}\frac{\sqrt{-1}}{|z_1|^2}dz_1\wedge d\bar{z}_1.$$
In particular, when the region is $\Omega=\{z_1| M<\sigma<\frac{-1}{2}\log |t|^2\}$,
\begin{equation}\label{e-f-Omega}
    \int_\Omega |f(z_1)|^2(\log |z_1|^2)^{2k}\frac{\sqrt{-1}}{|z_1|^2}dz_1\wedge d\bar{z}_1=\pi\int_M^y |f(z_1)|^2(\sigma)^{2k}d\sigma,
\end{equation}
where $y=\frac{-1}{2}\log |t|^2$. When $f=1$, we have 
$$\int_M^y (\sigma)^{2k}d\sigma=\frac{y^{2k+1}-M^{2k+1}}{2k+1}.$$
So \begin{eqnarray}
    \int_{\sqrt{y}}^y (\sigma)^{2k}d\sigma &=&\frac{y^{2k+1}-y^{(2k+1)/2}}{2k+1}\\\label{e-y-sqrty}
    &=&\frac{1-y^{-(2k+1)}/2}{1-(M/y)^{2k+1}} \int_M^y (\sigma)^{2k}d\sigma.
\end{eqnarray}

Then we have 
$$\lim_{t\to 0}\frac{\int_{\sqrt{y}}^y (\sigma)^{2k}d\sigma}{\int_M^y (\sigma)^{2k}d\sigma}=1.$$
When $f=z_1^i$ for $i\geq 1$, we do not have explicit formulas for the integral

\subsection{Collaping part.}

Denote by $G_J\subset \C$ the strip $J\times \sqrt{-1}\R$ for any subset $J\subset (0,1)$. Let $w$ be the complex coordinate for $G_J$ and let $x=\Re w$. Let
$J_t=(\frac{\log \epsilon}{\log |t|},1-\frac{\log \epsilon}{\log |t|})$, we denote by $G_t=G_{J_t}$.
Define a covering map:
$$P_t:G_t\times \{(z_2,\cdots,z_n)\big| |z_i|<R_i, i\geq 2  \}\to U\cap X_t$$
by $$P_t(w,z_2,\cdots,z_n)=(e^{w\log |t|},z_2,\cdots,z_n).$$

In \cite{2015Collapsing}, Zhang proved the following results. 
\begin{lem}[lemma 3.2 in  \cite{2015Collapsing}]\label{lem-zhang-3.2}
    Let $K\subset (0,1)$ be a compact subset such that $K\subset J_t$ for $|t|\ll 1$. On $G_K\times (U\cap D_\alpha)$, when $t\to 0$,
    $$P_t^*\chi_t V_t\to V_0'=\frac{dw\wedge d\bar{w}}{16(1-x)^2x^2}\wedge V_\alpha, $$
    in the $C^{\infty}$-sense, where $V_\alpha$ is a smooth volume form on $U\cap D_\alpha$.
\end{lem}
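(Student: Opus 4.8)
The goal is to compute, on a compact $K\subset(0,1)$, the $C^\infty$ limit of $P_t^*\chi_t V_t$ on $G_K\times(U\cap D_\alpha)$, where $\chi_t = (\log|t|^2)^2\prod_i\alpha_i^{-2}$ and, in the appropriate patch, only $\alpha_1,\alpha_2$ are relevant (the other $\alpha_i$ are bounded away from $0$ and smooth, so their contribution is absorbed). The plan is to pull back everything under $P_t(w,z_2,\dots,z_n) = (e^{w\log|t|},z_2,\dots,z_n)$ and track the leading behavior in $\log|t|$.

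\medskip\noindent\textbf{Step 1: pull back the local coordinate $z_1$.} Under $P_t$, we have $z_1 = e^{w\log|t|}$, hence $|z_1|^2 = e^{2x\log|t|} = |t|^{2x}$ and $\log|z_1|^2 = 2x\log|t|$, so $\sigma = -\log|z_1|^2 = -2x\log|t|$. Also $dz_1 = \log|t|\, z_1\, dw$, so
\[
\frac{\sqrt{-1}\,dz_1\wedge d\bar z_1}{|z_1|^2} = (\log|t|)^2\, \sqrt{-1}\, dw\wedge d\bar w.
\]
Thus the factor $|z_1|^{-2}\sqrt{-1}dz_1\wedge d\bar z_1$ that appears in $V_t$ (via the computation $|z_1|^2 V_t \asymp (\sqrt{-1})^n dz_1\wedge d\bar z_1\wedge\cdots\wedge dz_n\wedge d\bar z_n$ from the earlier estimates) produces a compensating $(\log|t|)^2$, which will cancel against a $(\log|t|)^{-2}$ coming from $\alpha_1\alpha_2$.

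\medskip\noindent\textbf{Step 2: pull back $\alpha_1$ and $\alpha_2$.} Recall $\alpha_2 = \psi_2 - \sigma$ and $\alpha_1 = \log|t|^2 + \sigma + \psi_1$ with $\psi_1,\psi_2$ bounded (and smooth). Substituting $\sigma = -2x\log|t| = -x\log|t|^2$:
\[
\alpha_2 = -x\log|t|^2 + \psi_2 = \log|t|^2\big(x + o(1)\big),\qquad
\alpha_1 = \log|t|^2(1-x) + \psi_1 = \log|t|^2\big((1-x) + o(1)\big),
\]
where $o(1)\to 0$ in $C^\infty$ on $G_K$ since $x\in K$ is bounded away from $0$ and $1$ and $\psi_j/\log|t|^2\to 0$ with all derivatives. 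Hence $\alpha_1^2\alpha_2^2 = (\log|t|^2)^4\, x^2(1-x)^2(1+o(1))$, and since $(\log|t|^2)^2 = 4(\log|t|)^2$,
\[
\tilde\chi_t = \frac{(\log|t|^2)^2}{\alpha_1^2\alpha_2^2} = \frac{1}{(\log|t|^2)^2\, x^2(1-x)^2}(1+o(1)) = \frac{1}{4(\log|t|)^2\, x^2(1-x)^2}(1+o(1)).
\]
By the earlier comparison $C'^{-1} < \chi_t/\tilde\chi_t < C'$ — and in fact one needs the sharper statement that $\chi_t/\tilde\chi_t\to 1$ in $C^\infty$ on this region, which follows because the remaining $\alpha_i$ ($i\neq 1,2$) are smooth, nonvanishing, and $t$-independent up to the $P_t$-pullback of bounded data — we get the same leading term for $\chi_t$.

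\medskip\noindent\textbf{Step 3: assemble.} Combining, on $G_K\times(U\cap D_\alpha)$,
\[
P_t^*\big(\chi_t V_t\big) = \frac{1+o(1)}{4(\log|t|)^2 x^2(1-x)^2}\cdot (\log|t|)^2\,\sqrt{-1}\,dw\wedge d\bar w\wedge V_\alpha(1+o(1)) \longrightarrow \frac{dw\wedge d\bar w}{16(1-x)^2 x^2}\wedge V_\alpha,
\]
in $C^\infty$, where $V_\alpha$ is the smooth limiting volume form on $U\cap D_\alpha$ obtained from the smooth, $t$-independent part of $V$ in these coordinates (absorbing the factors of $2$ into the constant $16$; note $\sqrt{-1}\,dw\wedge d\bar w = 2\,dx\wedge dy$, and one should be careful whether the normalization in the statement folds that $2$ in). The convergence is $C^\infty$ rather than merely pointwise because every $o(1)$ term is of the form $\psi_j/\log|t|^2$ or a derivative thereof, and differentiating in $w$ brings down factors of $\log|t|$ that are dominated, while differentiating in the $z_j$-directions acts only on the smooth bounded data.

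\medskip\noindent\textbf{Main obstacle.} The genuine work is not the leading-order bookkeeping but upgrading everything to $C^\infty$ convergence uniformly on $G_K$: one must check that all the error terms $\psi_j$, and the smooth positive function relating $V$ to the coordinate volume form, together with their derivatives of all orders, are bounded independently of $t$ on the shrinking-then-stabilizing region parametrized by $P_t$, and that the $\log|t|$ factors produced by $\d_w$-differentiation are always beaten by the $(\log|t|)^{-1}$ gains. This is exactly the kind of estimate Zhang controls via the a priori bounds behind Proposition~\ref{prop-ke-t}; in this write-up I would either cite \cite{2015Collapsing} directly for the $C^\infty$ statement or reprove it by the scaling argument above, making the uniformity of the $\psi_j$-derivatives explicit.
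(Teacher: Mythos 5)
This lemma is not proved in the paper at all: it is imported verbatim as Lemma 3.2 of Zhang \cite{2015Collapsing}, so there is no in-paper argument to compare against. Your computation is, however, essentially the standard derivation of that lemma and is correct in substance: the key identities are $\sigma\circ P_t=-x\log|t|^2$, hence $\alpha_1=\log|t|^2\bigl((1-x)+o(1)\bigr)$ and $\alpha_2^2=(\log|t|^2)^2\bigl(x+o(1)\bigr)^2$, together with $P_t^*\bigl(|z_1|^{-2}\sqrt{-1}\,dz_1\wedge d\bar z_1\bigr)=(\log|t|)^2\sqrt{-1}\,dw\wedge d\bar w$, and these combine exactly as you say. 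Two small corrections. First, $\chi_t/\tilde\chi_t=\prod_{i\ge 3}\alpha_i^{-2}$ does \emph{not} tend to $1$; it tends in $C^\infty$ to the smooth positive function $\prod_{i\ge 3}\alpha_i^{-2}\big|_{U\cap D_\alpha}$ (since $P_t\to$ the projection onto $D_\alpha$ on $G_K$), and this factor is absorbed into the otherwise unspecified $V_\alpha$ — as is the harmless $4$ versus $16$ normalization you flag. Second, the reason $w$-derivatives cause no trouble is not a ``$(\log|t|)^{-1}$ gain'': one has $\partial_w(\psi\circ P_t)=(\log|t|)\,z_1\,(\partial_{z_1}\psi)\circ P_t$ with $|z_1|=|t|^{x}\le |t|^{\delta}$ on $G_K$, so each $w$-derivative of pulled-back smooth data is $O(|t|^{\delta}\log|t|)\to 0$; iterating this gives the $C^\infty$ statement uniformly on $G_K\times(U\cap D_\alpha)$. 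With those two points made precise, your argument is a complete and self-contained proof of the cited lemma.
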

When $U=U_j$ for some $j$, $V_{\alpha,j}$ is a more accurate notation for the volume form $V_{\alpha}$ in the lemma. By our requirements on the coordinates patches $U_j$, we have that on the intersection of $(U_i,(z_0,
\cdots,z_n))$ and $(U_j,(z'_0,
\cdots,z_n'))$, we have $$\big|\log \frac{|z_1'|}{|z_1|} \big|<C,$$ for some constant $C>0$. So the corresponding coordinates on $K$ satisfies $|x'-x|<\frac{C}{|\log |t||}$. So when $t\to 0$, $x'-x\to 0$. And the imaginary part is similar. Therefore, we have $$V_{\alpha,j}=V_{\alpha,i},$$
on overlaps, namely they glue together to produce a global volume form $\vcal_\alpha$ on $D_\alpha$. And we denote by $\vcal$ the induced volume form on $D$. And for simplicity, we denote by $\vcal_j$ when restricted to $U_j\cap D_\alpha$ for some $\alpha$.

\begin{theorem}[lemma 3.3 in  \cite{2015Collapsing}]\label{the-zhang-3.3}
Let $\varphi_t$ be the unique solution of (\ref{eqn-ma-t}), and $\omega_t=\tilde{\omega}_t+\sqrt{-1}\d\dbar \varphi_t$. For any sequence $t_k\to 0$, a subsequence of $\phi_{t_k}\circ P_{t_k}$ converges to $\phi_0$ in the $C^\infty $-sense on $K\times \sqrt{-1}\R\times (U\cap D_\alpha)$ satisfying the complex \moa equation:
\begin{equation}\label{eqn-ma-0}
    (\tilde{\omega}_0+\sqrt{-1}\ddbar \varphi_0)^n=e^{\varphi_0} V_0'
\end{equation}
with $|\varphi_0|\leq C_3$, and $C_4^{-1}\tilde{\omega}_0\leq \tilde{\omega}_0+\sqrt{-1}\ddbar \varphi_0\leq C_4\tilde{\omega}_0$.

Furthermore, $\varphi_0$ is independent of $\Im(w)$, i.e. 
$$\varphi_0=\varphi_0(x,z_{2},\cdots,z_n).$$
\end{theorem}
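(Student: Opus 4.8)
The plan is to run uniform interior a priori estimates for the family of complex \moa equations (\ref{eqn-ma-t}), pulled back by $P_t$, to extract a $C^\infty_{loc}$-convergent subsequence by Arzel\`a--Ascoli, and then to read off the independence of $\Im(w)$ from the fact that the covering $P_t$ has a deck translation whose period shrinks to zero.

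First I would pull back (\ref{eqn-ma-t}) by the holomorphic map $P_t$, obtaining on $G_K\times(U'\cap D_\alpha)$ --- for $K\Subset(0,1)$ and $U'\Subset U$ --- the equation
$$\bigl(P_t^*\tilde\omega_t+\sqrt{-1}\ddbar(P_t^*\varphi_t)\bigr)^n=e^{P_t^*\varphi_t}\,P_t^*(\chi_t V_t).$$
By Lemma \ref{lem-zhang-3.2}, $P_t^*(\chi_t V_t)\to V_0'$ in $C^\infty_{loc}$, and since $\tilde\omega_t=\sqrt{-1}\ddbar\log(\chi_t V_t)$ and $P_t$ is holomorphic, $P_t^*\tilde\omega_t=\sqrt{-1}\ddbar\log P_t^*(\chi_t V_t)\to \sqrt{-1}\ddbar\log V_0'=:\tilde\omega_0$ in $C^\infty_{loc}$; in particular the reference data of the equation are uniformly controlled in $C^\infty$ on compact subsets, independently of $t$. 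Proposition \ref{prop-ke-t} supplies the two crucial uniform estimates: the $C^0$ bound $|P_t^*\varphi_t|\le C_1$, and the two-sided comparison $C_2^{-1}P_t^*\tilde\omega_t\le P_t^*\omega_t\le C_2 P_t^*\tilde\omega_t$, so that $P_t^*\omega_t$ is uniformly comparable, on compact subsets, to the fixed smooth metric $\tilde\omega_0$. Hence the \moa equation for $P_t^*\varphi_t$ is uniformly elliptic with coefficients uniformly bounded in $C^\infty$, the interior complex Evans--Krylov estimate gives a uniform $C^{2,\alpha}_{loc}$ bound, and differentiating the equation and bootstrapping with interior Schauder estimates gives uniform $C^\ell_{loc}$ bounds on $P_t^*\varphi_t$ for every $\ell$, with constants independent of $t$ for $|t|$ small.

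Next, given any sequence $t_k\to0$, I would apply Arzel\`a--Ascoli with a diagonal argument over an exhaustion of $(0,1)\times\sqrt{-1}\R\times(U\cap D_\alpha)$ by compact sets to extract a subsequence along which $P_{t_k}^*\varphi_{t_k}\to\varphi_0$ in $C^\infty_{loc}$. Passing to the limit in the pulled-back equation, using the $C^\infty_{loc}$ convergences of $P_t^*\tilde\omega_t$ and $P_t^*(\chi_t V_t)$ above, shows that $\varphi_0$ solves (\ref{eqn-ma-0}); the bounds $|\varphi_0|\le C_3$ and $C_4^{-1}\tilde\omega_0\le\tilde\omega_0+\sqrt{-1}\ddbar\varphi_0\le C_4\tilde\omega_0$ are the limits of those in Proposition \ref{prop-ke-t}. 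For the independence of $\Im(w)$, write $w=x+\sqrt{-1}y$; since $P_t(w,z_2,\dots,z_n)=(e^{w\log|t|},z_2,\dots,z_n)$ and shifting $y$ by $\delta_t:=\tfrac{2\pi}{|\log|t||}$ changes $\arg(z_1)=y\log|t|$ by $-2\pi$ and hence leaves $P_t$ unchanged, the function $P_t^*\varphi_t$ is periodic in $y$ with period $\delta_t\to0$. By periodicity $\partial_y(P_t^*\varphi_t)$ has mean zero over a period, hence vanishes at some point of every $y$-interval of length $\delta_t$, so the uniform $C^2_{loc}$ bound forces $|\partial_y(P_t^*\varphi_t)|\le C\delta_t\to0$ locally uniformly; letting $t=t_k\to0$ along the subsequence yields $\partial_y\varphi_0\equiv0$, i.e. $\varphi_0=\varphi_0(x,z_2,\dots,z_n)$.

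The hard part is the uniform higher-order estimate: one must verify that the standard $C^0\to C^2\to C^{2,\alpha}\to C^\infty$ chain for complex \moa equations runs with all constants independent of $t$. This is precisely what the combination of the uniform $C^0$ bound and the uniform two-sided metric comparison from Proposition \ref{prop-ke-t}, together with the $C^\infty$ convergence of the reference data from Lemma \ref{lem-zhang-3.2} (valid on compacta that stay away from $x=0,1$ and from $\partial U$), is there to provide. Once these inputs are in place, the extraction of the limit, the passage to the limit in the equation, and the periodicity argument are routine.
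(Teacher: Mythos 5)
This theorem is imported verbatim from Zhang's paper (cited as lemma 3.3 of \cite{2015Collapsing}); the present paper gives no proof of it, so there is nothing internal to compare against. Your outline is a correct reconstruction of the standard argument used there: the uniform $C^0$ bound and two-sided metric comparison of Proposition \ref{prop-ke-t}, together with the $C^\infty_{loc}$ convergence of the reference data from Lemma \ref{lem-zhang-3.2}, feed Evans--Krylov and Schauder bootstrapping to give $t$-independent $C^\ell_{loc}$ bounds, Arzel\`a--Ascoli with a diagonal extraction produces the limit solving (\ref{eqn-ma-0}), and the independence of $\Im(w)$ follows exactly as you say from the periodicity of $P_t^*\varphi_t$ in $y$ with period $2\pi/|\log|t||\to 0$ combined with the uniform second-derivative bound.
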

By lemma \ref{lem-zhang-3.2} and theorem \ref{the-zhang-3.3},
$e^{\varphi_0} V_0'=e^{\varphi_0}\frac{dw\wedge d\bar{w}}{16(1-x)^2x^2}\wedge\vcal_i$ on $U_i\cap D_\alpha$. 
\begin{definition}
    We call a sequence $\{t_l\}$ that converges to 0 a good sequence if $P_t^*\omega_t^n$ converges on $G_K\times (U_i\cap D_\alpha)$ for every $\alpha,i$ and for every $K$.
\end{definition}
Let $t_u\to 0$ be a good sequence. We consider the form $e^{\varphi_0}\frac{\vcal_i}{16(1-x)^2x^2}$ on 
$G_K\times (U_i\cap D_\alpha)$. If we change the coordinates $(z_2,\cdots,z_n)$ on $U_i\cap D_\alpha$ to $(z_2',\cdots,z_n')$, then the representation function of $\omega_t^n$ is changed by $|\det J|^2$, where $J$ is the matrix $ \{\frac{\partial z_a}{\partial z_b'} \}_{a,b\geq 2}$. Then since the image of $G_K\times (U_i\cap D_\alpha)$ under $P_t$ converges to $U_i\cap D_\alpha$ as $t\to 0$, the limit of the representation function of $P_t^*\omega_t^n$ is changed by $|\det J|_{U_i\cap D_\alpha}|^2$. Therefore, $e^{\varphi_0}\frac{\vcal_i}{16(1-x)^2x^2}$ glues together to be a smooth section of $\Pi^{-1}\Omega_{D_\alpha}^{n-1,n-1}$ on $G_K\times  D_\alpha$, where $\Pi:G_K\times  D_\alpha\to D_\alpha$ is the natural projection. And by letting $K=[\delta,1-\delta]$ and $\delta\to 0$, we get a smooth section of $\Pi^{-1}\Omega_{D_\alpha}^{n-1,n-1}$ on $G_{(0,1)}\times  D_\alpha$.

Recall that 
$$\omega_t^n=e^{-\phi_t}\frac{(\sqrt{-1})^{n}}{|z_1|^2}dz_1\wedge d\bar{z}_1\wedge\cdots dz_n\wedge d\bar{z}_n,$$ then since $P_t^*dz_1\wedge d\bar{z}_1=(\log |t|)^2|z_1|^2 dw\wedge d\bar{w}$, we have $$P_t^*\omega_t^n=(\log |t|)^2e^{-\phi_t\circ P_t}(\sqrt{-1})^{n}dw\wedge d\bar{w}\wedge dz_2\wedge d\bar{z}_2\wedge\cdots dz_n\wedge d\bar{z}_n.$$For simplicity, we denote by $d\mu=(\sqrt{-1})^{n}dw\wedge d\bar{w}\wedge dz_2\wedge d\bar{z}_2\wedge\cdots dz_n\wedge d\bar{z}_n$. Then if we denote by $\vcal_i=e^{\psi_i}(\sqrt{-1})^{n-1} dz_2\wedge d\bar{z}_2\wedge\cdots dz_n\wedge d\bar{z}_n$, then 
$$\lim_{l\to\infty }(\log |t_l|)^2e^{-\phi_{t_l}\circ P_{t_l}}=\frac{1}{16(1-x)^2x^2} e^{\psi_i+\varphi_0}.$$
Let $s=f(dz_0\wedge\cdots\wedge dz_n)^{\otimes (k+1)}$ be a local section of $K_\xcal^{k+1}$. Then the $L_2$ norm over $P_t(G_K\times (U_i\cap D_\alpha))$ of $s_t$ is $$\int_{K\times\sqrt{-1}[0,\frac{2\pi}{-\log |t|}]\times (U_i\cap D_\alpha)} |f|^2e^{k\phi_t}(\log |t|)^2d\mu.$$
In particular, when $f$ depends only on $(z_2,\cdots,z_n)$, we are looking at 
$$\int_{K\times \sqrt{-1}[0,\frac{2\pi}{-\log |t|}]\times (U_i\cap D_\alpha)}|f|^2 \frac{e^{k\phi_t}}{(\log |t|)^{2k}}(\log |t|)^{2k+2}d\mu.$$

So its quotient by $-(\log |t_l|)^{2k+1}$, as $l\to \infty$, converges to
\begin{equation}\label{e-0-D}
    \pi\int_{K\times (U_i\cap D_\alpha)}|f|^2 [4(1-x)x]^{2k}e^{-k\psi_i-k\varphi_0} d\mu=\int_{U_i\cap D_\alpha}|f|^2 e^{-(k+1)\psi_i} d\nu_{\{t_l\},\alpha,K,k}
\end{equation}
where \begin{equation}
    d\nu_{\{t_l\},\alpha,K,k}=\pi\vcal\int_{x\in K}^{}e^{-k\varphi_0}[4(1-x)x]^{2k}dx,
\end{equation}
is a volume form on $D_\alpha$. We also denote by 
$$d\nu_{\{t_u\},\alpha,k}=\pi\vcal\int_{0}^{1}e^{-k\varphi_0}[4(1-x)x]^{2k}dx,$$ 
the induced volume form on $D_\alpha$.

Note that $e^{-\psi_i}=\parallel dz_2\wedge\cdots\wedge dz_n\parallel^2_{\vcal_i}$. So
the term $|f|^2e^{-(k+1)\psi_i}$ can be understood as the point-wise norm of a pluri-canonical form. To connect $K_{\xcal}$ to $K_{D_\alpha}$, we notice that $dz_0\wedge dz_1$ on $D_\alpha$ does not depend on our choice of $(z_0,z_1)$. Indeed, since we require $z_0z_1=t$, a different choice $(z_0',z_1')$ must satisfy $z_0'=z_0F(z), z_1'=\frac{z_1}{F(z)}$. Let $q\in U\cap D_\alpha$, $c=F(q)$. Then, at $q$, we have $dz_0'=cdz_0$ and $dz_1'=c^{-1}dz_1$. So $dz_0\wedge dz_1=dz_0'\wedge dz_1'$. So we have an isomorphism $K_{\xcal}|_{D_\alpha}\to K_{D_\alpha}$ given by $dz_0\wedge\cdots\wedge dz_n\to dz_2\wedge\cdots\wedge dz_n$. In the following, we will tacitly use this isomorphism. 

For later use, we denote by \begin{eqnarray*}
    b_k&=&\int_0^1 [4(1-x)x]^{2k}dx\\
    &=&16^k \frac{\Gamma(2k+1)^2}{\Gamma(4k+1)}.
\end{eqnarray*}

\section{Construction of almost orthonormal bases}\label{sec-3}
\subsection{Inner sections}
To apply these estimates to global sections, we use an extension theorem by Finski. Let $X$ be a complex manifold with a positive line bundle $(E,h^E)$ over it. Let $(F,h^F)$ be an arbitrary Hermitian vector bundle over $X$. Let $Y$ be a complex manifold and let $\iota: Y\to X$ be an embedding. Let $n=\dim X$ $m=\dim Y$. Fix volume forms $dv_X$ and $dv_Y$ on $X$ and $Y$ respectively. Then both $H^0(X,E^k\otimes F)$ and $H^0(Y,\iota^*(E^k\otimes F) )$ is endowed with a Hermitian inner product. Denote by $H^{0,\bot }(X,E^k\otimes F)$ the orthogonal complement of the space of sections that vanish on $Y$. Then we would like to estimate the norm of the restriction operator
$$\text{Res}_k:H^{0,\bot }(X,E^k\otimes F)\to H^0(Y,\iota^*(E^k\otimes F) ).$$

Under the assumption of bounded geometry, Finski proved the following theorem.

\begin{theorem}[theorem 4.1 in \cite{Finski}]\label{thm-fin-total}
    There exist $c,C>0$ and integer $k_1>0$ such that for any $k\geq k_1$, we have
   $$ ck^{\frac{n-m}{2}}\leq \parallel \text{Res}_k\parallel\leq Ck^{\frac{n-m}{2}}.  $$
\end{theorem}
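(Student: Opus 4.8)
The plan is to treat the two inequalities separately; in both the exponent $k^{(n-m)/2}$ appears for a purely dimensional reason: after rescaling local coordinates at a point of $Y$ by $\sqrt{k}$, a holomorphic section of $E^k\otimes F$ normalized in $L^2(X)$ looks like a Gaussian on $\C^n$ whose restriction to the $m$-dimensional slice $\C^m$ carries a fraction $\sim k^{(m-n)/2}$ of its $L^2$-mass. So everything reduces to two uniform local facts that the bounded-geometry hypothesis is precisely designed to supply: a pointwise sub-mean-value inequality for holomorphic sections, and the existence of suitably concentrated peak sections.

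For the \emph{upper bound} I would first prove, uniformly in $k\ge k_1$ and in the base point $x\in X$, an estimate of the form $|s(x)|^2_{h^k}\le C k^{n}\int_{B_X(x,k^{-1/2})}|s|^2\,dv_X$ for $s\in H^0(X,E^k\otimes F)$; this is the standard consequence of subharmonicity of $|s|^2$ weighed against the curvature of $E^k$, made uniform by dilating geodesic coordinates at $x$ by $\sqrt{k}$ so that the rescaled metric and connection converge, uniformly by bounded geometry, to the Bargmann--Fock model on $\C^n$. Integrating over $Y$ and applying Fubini turns $\int_Y|s|^2\,dv_Y$ into $C k^{n}\int_X|s(x)|^2\,\mu(x)\,dv_X(x)$ with $\mu(x)=\operatorname{vol}_Y\!\big(Y\cap B_X(x,k^{-1/2})\big)$; the bounded geometry of the embedding $\iota$ gives $\mu(x)\le C k^{-m}$ uniformly, and hence $\|\mathrm{Res}_k\|\le C k^{(n-m)/2}$ — in fact on all of $H^0(X,E^k\otimes F)$, not merely on $H^{0,\bot}$.

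For the \emph{lower bound} I would, for a fixed $p\in Y$, use bounded geometry to produce a peak section $S_p\in H^0(X,E^k\otimes F)$ with $\|S_p\|_{L^2(X)}=1$, with pointwise norm comparable to $k^{n}e^{-ck\,d(x,p)^2}$ near $p$ and exponentially small $L^2$-mass away from $p$ — constructed by Hörmander's $\dbar$-technique with a weight peaked at $p$ together with interior Cauchy estimates, or by invoking a uniform near-diagonal Bergman kernel expansion. The model integral $\int_{\C^m}e^{-ck|y|^2}\,dv=O(k^{-m})$ then yields $\|S_p|_Y\|^2_{L^2(Y)}\ge c\,k^{n-m}$ for $k$ large. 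Finally I would observe that if $S_p=s_0+s_1$ with $s_0$ vanishing on $Y$ and $s_1\in H^{0,\bot}$, then $s_1|_Y=S_p|_Y$ and $\|s_1\|_{L^2(X)}\le\|S_p\|_{L^2(X)}=1$, so $\|\mathrm{Res}_k\|\ge\|s_1|_Y\|_{L^2(Y)}/\|s_1\|_{L^2(X)}\ge c\,k^{(n-m)/2}$. (Equivalently, the minimal $L^2$-extension is a right inverse of $\mathrm{Res}_k$ with image in $H^{0,\bot}$, and a uniform Ohsawa--Takegoshi estimate bounds its operator norm by $C k^{(m-n)/2}$, giving the same conclusion.)

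The hard part will be the uniformity, which is where the bounded-geometry hypothesis does all the work: the sub-mean-value constant and the peak-section construction must be independent of $k$ and of the chosen base point, obtained through the $\sqrt{k}$-rescaling to the Bargmann--Fock model, and one needs the Fubini volume bound $\operatorname{vol}_Y\!\big(Y\cap B_X(x,k^{-1/2})\big)\le C k^{-m}$, which encodes that $Y$ is embedded in $X$ with uniformly bounded second fundamental form and a uniform tubular neighbourhood. The degenerate cases $m=n$ (where $Y$ is open in $X$ and $\mathrm{Res}_k$ is an isometry onto its image) and $Y=\varnothing$ are trivial.
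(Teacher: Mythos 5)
This statement is not proved in the paper at all: it is quoted verbatim as Theorem 4.1 of \cite{Finski} and used as a black box (together with its concrete companion, the extension estimate of Theorem 4.4 there), so there is no in-paper argument to compare yours against. On its own terms your sketch is the standard semiclassical route and is sound: the upper bound via the uniform sub-mean-value inequality $|s(x)|^2\leq Ck^n\int_{B(x,k^{-1/2})}|s|^2\,dv_X$ plus Fubini and the volume bound $\mathrm{vol}_Y\bigl(Y\cap B_X(x,k^{-1/2})\bigr)\leq Ck^{-m}$, and the lower bound either via peak sections projected onto $H^{0,\bot}$ (your orthogonal decomposition $S_p=s_0+s_1$ is handled correctly, since $\|s_1\|\leq\|S_p\|$ and $s_1|_Y=S_p|_Y$) or, equivalently, via the uniform Ohsawa--Takegoshi extension with norm $Ck^{(m-n)/2}$ --- which is exactly the form the present paper imports as Finski's Theorem 4.4. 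The genuine content, as you acknowledge, is the uniformity of the sub-mean-value constant, of the peak-section localization, and of the tubular-neighbourhood/second-fundamental-form control on $\iota(Y)$ under the bounded-geometry hypothesis; that is where the actual work in \cite{Finski} is done (through Bergman-kernel off-diagonal estimates and the semiclassical extension operator), and your proposal correctly identifies it but does not carry it out.
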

The readers are refered to \cite{Finski} (Definitions 2.3 and 2.4) for the definition of bouded geometry. 
The lower bound in the theorem has a less abstract form:
\begin{theorem}[theorem 4.4 in \cite{Finski}]
    There exists $C>0$, $k_1>0$ such that for any $k\geq k_1$ and $s\in H^0(Y,\iota^*(E^k\otimes F) )$, there is $f\in H^0(X,E^k\otimes F)$ such that $f|_Y=s$ and 
    $$\parallel f\parallel_{L^2(X)}\leq Ck^{(m-n)/2}\parallel s\parallel_{L^2(Y)}$$
\end{theorem}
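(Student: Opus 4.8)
The plan is to prove this as a semiclassical Ohsawa--Takegoshi extension statement: first produce a \emph{smooth} extension of $s$ whose $L^2(X)$-norm already exhibits the factor $k^{(m-n)/2}$, and then correct it to a holomorphic extension by solving a $\dbar$-equation without losing the order in $k$. The factor $k^{(m-n)/2}$ itself is not mysterious: in the flat model $X=\C^n$, $Y=\{z''=0\}=\C^m$, with weight $e^{-k|z|^2}$, the constant-in-$z''$ extension $F(z',z'')=s(z')$ satisfies $\int_{\C^n}|F|^2e^{-k|z|^2}=(\pi/k)^{n-m}\int_{\C^m}|s|^2e^{-k|z'|^2}$, so the whole proof is about globalizing this computation.

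\textbf{Local model and the smooth extension.} By bounded geometry, cover $Y$ by charts $U_\beta$ of uniform size with holomorphic coordinates $(z',z'')\in\C^m\times\C^{n-m}$, $Y\cap U_\beta=\{z''=0\}$, together with frames of $E$ and $F$ in which the metric of $E^k\otimes F$ has weight $e^{-k\varphi_\beta}$ with $\varphi_\beta$ uniformly strictly plurisubharmonic (the bounded factor coming from $F$ absorbed into the constants). Let $\tilde s_\beta$ be the local holomorphic section that in these frames equals $s|_{Y\cap U_\beta}$ and is independent of $z''$. The key point is the normal Gaussian integral attached to the weight: on the relevant scale $\int e^{-k\varphi_\beta(z',z'')}\,dv_X(\cdot,z'')\approx(\pi/k)^{n-m}e^{-k\varphi_\beta(z',0)}$, whence
$$\int_{U_\beta}\|\tilde s_\beta\|^2\,dv_X\;\le\;C\,k^{m-n}\int_{Y\cap U_\beta}\|s\|^2\,dv_Y$$
uniformly in $\beta,k$. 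Gluing with a partition of unity $\{\chi_\beta\}$ subordinate to $\{U_\beta\}$, the section $\tilde s=\sum_\beta\chi_\beta\tilde s_\beta$ is smooth, supported in a fixed tubular neighbourhood of $Y$, restricts to $s$ on $Y$, and satisfies $\|\tilde s\|_{L^2(X)}\le C k^{(m-n)/2}\|s\|_{L^2(Y)}$.

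\textbf{The $\dbar$-correction.} Since $\sum_\beta\chi_\beta\equiv1$ we may write $\dbar\tilde s=\sum_\beta(\dbar\chi_\beta)(\tilde s_\beta-\tilde s_{\beta_0})$ on the support of each $\dbar\chi_\beta$, and each difference $\tilde s_\beta-\tilde s_{\beta_0}$ vanishes on $Y$ (and, after a careful choice of the local extensions, to higher order along $Y$). Solve $\dbar u=\dbar\tilde s$ with $u|_Y=0$ by H\"ormander's $L^2$-estimate for $E^k\otimes F$ equipped with an auxiliary singular weight having a pole of order $2(n-m)$ along $Y$: the pole forces the minimal-norm solution to vanish on $Y$, and the weight is admissible for $k$ large because the curvature of $E^k$ is of size $\gtrsim k\,\omega$. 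The curvature lower bound $\ge ck\,\omega$ yields $\|u\|_{L^2,\mathrm{wt}}^2\le(ck)^{-1}\|\dbar\tilde s\|_{L^2,\mathrm{wt}}^2$; combined with the vanishing of $\dbar\tilde s$ along $Y$ and the model computation, this gives $\|u\|_{L^2(X)}\le Ck^{(m-n)/2}\|s\|_{L^2(Y)}$. Then $f=\tilde s-u$ is holomorphic, $f|_Y=s$, and satisfies the stated bound. (If one wants the statement phrased for $\text{Res}_k$, replacing $f$ by its orthogonal projection onto $H^{0,\bot}(X,E^k\otimes F)$ does not change $f|_Y$ and only decreases the norm.)

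\textbf{Main obstacle.} The delicate point is the $\dbar$-step: ensuring the correction $u$ carries the sharp power $k^{(m-n)/2}$ rather than something polynomially worse. A crude sup-norm bound on $\tilde s_\beta-\tilde s_{\beta_0}$ (which lives on the fixed-size chart overlaps) loses several powers of $k$; one must instead keep the $L^2$ integrals in the normal variables and balance the order of vanishing of $\dbar\tilde s$ along $Y$ against both the order of the pole in the H\"ormander weight and the $(ck)^{-1}$ gain from positivity. This is the heart of the argument in \cite{Finski}. An equivalent route that packages the same difficulty is to apply a sharp (optimal-constant) Ohsawa--Takegoshi extension theorem directly to $E^k\otimes F\otimes K_X^{-1}$, whose optimal constant reproduces the $(\pi/k)^{n-m}$ normal Gaussian factor, at the cost of tracking the adjunction isomorphism $K_X|_Y\cong K_Y\otimes(\Lambda^{n-m}N_Y)^{-1}$ with metric comparison constants independent of $k$.
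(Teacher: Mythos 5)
The paper does not prove this statement at all: it is quoted verbatim as Theorem 4.4 of \cite{Finski} and used as a black box (to bound the minimal extension $\tilde s$ of a section on $D_\alpha$ to $\xcal$ by $Ck^{-1}\parallel s\parallel$). So there is no in-paper proof to compare against, and your proposal has to be judged on its own.

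As a sketch of the standard semiclassical Ohsawa--Takegoshi strategy it is reasonable, but there is one concrete gap in the first step. On a chart $U_\beta$ of \emph{uniform} ($k$-independent) size, the claim $\int e^{-k\varphi_\beta(z',z'')}\,dv(z'')\approx(\pi/k)^{n-m}e^{-k\varphi_\beta(z',0)}$ is false for a general uniformly strictly plurisubharmonic weight: if $\varphi_\beta$ has a nonzero first-order term in the normal variables, say $\varphi_\beta(z',z'')=\varphi_\beta(z',0)+2\Re\sum_j a_j(z')z''_j+O(|z''|^2)$ with some $a_j\neq 0$, then completing the square produces a factor $e^{k|a|^2}$ and the constant-in-$z''$ extension has exponentially large norm. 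This linear term cannot in general be gauged away by a holomorphic change of frame of $E$ along all of $Y\cap U_\beta$, because the coefficients $a_j(z')$ need not be holomorphic in $z'$; one can only normalize $\varphi=|z|^2+O(|z|^3)$ at a point. So either you must shrink the charts to size $O(k^{-1/2+\epsilon})$ (after which $|\dbar\chi_\beta|\sim k^{1/2-\epsilon}$ and the bookkeeping in the $\dbar$-step changes substantially), or you must replace the frame-dependent constant extension by a genuine local holomorphic extension and control its growth off $Y$ via the sub-mean-value inequality for $\log\parallel\tilde s\parallel^2_{h^k}$ (whose $\ddbar$ is bounded below by $-Ck\omega$, so averages over $k^{-1/2}$-balls are uniformly comparable). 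Your final remark, that one can instead invoke a sharp Ohsawa--Takegoshi theorem for $E^k\otimes F\otimes K_X^{-1}$, is the cleanest way to sidestep this issue, but then the entire content of the statement is in tracking how the optimal constant scales with $k$, which is precisely what is being asserted. In short: the architecture is right, but the step that actually produces the factor $k^{(m-n)/2}$ is not yet justified as written.
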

We let $Y=D_\alpha$, $X=\xcal$ with \kahler form $\omega$ and $E=K_\xcal$ with the Hermitian metric $h$. 
Then by slightly shrinking the disk $B$, our setting with $F$ the trivial bundle satisfies the assumption of bounded geometry in \cite{Finski}. We denote by $\parallel \cdot \parallel_{h}$ the $L_2$-norm defined by $h$ and $\omega$ for sections in $H^0(D_\alpha,kL)$. Then let $s\in H^0(D_\alpha,kL)$ be of unit norm, we have an extension $\tilde{s}\in H^0(\xcal,kL)$
For any submanifold $Y'\subset \xcal$, We denote by $\parallel \cdot \parallel_{h,Y'}$ the $L_2$-norm defined by $h$ and $\omega$ for sections in $H^0(Y',kL)$. Then there exists $C>0, k_1>0$ such that for a section $s\in H^0(D_\alpha,kL)$ of unit norm with $k\geq k_1$, we have an extension $\tilde{s}\in H^0(\xcal,kL)$ such that $$\parallel \tilde{s}\parallel^2_{h,\xcal}\leq \frac{C_1}{k^2}.$$
To fix our choice, we let $\tilde{s}$ to be of minimal norm.

Denote by $\tilde{s}_{0,i}$ the restriction of $\tilde{s}$ to $X_{0,i}$. Then for $i=1,2$, $\tilde{s}$ is an extension of $\tilde{s}_{0,i}$ from $X_{0,i}$. Then by the upper bound in theorem \ref{thm-fin-total}, we have 
\begin{eqnarray*}
    \parallel \tilde{s}_{0,i}\parallel^2_{h,X_{0,i}}&\leq& C_2k\parallel \tilde{s}\parallel^2_{h,\xcal}\\
    &\leq& \frac{C_3}{k},
\end{eqnarray*}
for $i=1,2$.

If we fix a \kahler metric on $\xcal$, then any vector field $v$ on $B$ can be horizontally lifted to be a vector field $\tilde{v}$ on $\xcal\backslash \text{Sing}(X_0)$. Then for any $t\neq 0$, we can use a straight line $\gamma(u)$ on $B$ to connect $0$ and $t$, and then the integral flow $\tilde{v}$ with $v=\frac{\partial}{\partial u}$ gives the maps $\phi_{\beta,t}$ in Tian's theorem, at least for $t$ small. 

Denote by $\tilde{s}_t$ the restriction of $\tilde{s}$ to $X_t$. 
For every $\delta_1>0$, we let $U_i^{\delta_1}$ denote the subset of $U_i$ consists of points satisfying $|z_0|>\delta_1$ or $|z_1|>\delta_1$. Then $\exists M_1>0$ such that $\int_{U_j^{\delta_1}\cap X_{0,i}}\parallel \tilde{s}_{0,i}\parallel^2_{CY}<\epsilon_1$. Then
by theorem \ref{thm-tian}, we have $\int_{U_j^{\delta_1}\cap X_t }\parallel \tilde{s}_{t}\parallel^2_{\text{KE}}<2M_1,$ for $|t|$ small enough. Let $M$ be the constant as in the statement of corollary \ref{cor-wt-bound}. We can choose $M>2k$. And we denote by $y_1=-\log |t|^2-M$. We choose $\delta_1$ so that $-\log |\delta_1|^2=4M$.

On $U_j$, $\tilde{s}=f(dz_0\wedge\cdots\wedge dz_n)^{\otimes (k+1)}$. Then on 
$U_j\cap X_t$, using $(z_1,\cdots,z_n)$ as coordinates, $f=\sum_{i\in \Z }a_i z_1^i$, where $a_i=a_i(z_2,\cdots,z_n)$. We denote by $f_+=\sum_{i>0 }a_i z_1^i$ and $f_-=\sum_{i<0 }a_i z_1^i$. So now we need to consider
$$\int |z_1|^{2i}(\sigma)^{2k}d\sigma=\int e^{-i\sigma+2k\log \sigma}d\sigma.$$
We denote by $\tilde{s}_+=f_+(dz_0\wedge\cdots\wedge dz_n)^{\otimes (k+1)}$ and $\tilde{s}_-=f_-(dz_0\wedge\cdots\wedge dz_n)^{\otimes (k+1)}$.

To analyze the integrals, we recall the following basic lemma from \cite{sun2024ChengYau}.
\begin{lem}\label{lem-concave}
    Let $f(x)$ be a concave function. Suppose $f'(x_0)<0$, then we have
    $$\int_{x_0}^\infty e^{f(x)}dx\leq\frac{e^{f(x_0)}}{-f'(x_0)}$$
\end{lem}

Since the exponent $-i\sigma+2k\log \sigma$ is a concave function of $\sigma$, we have 
$$\int_{4M}^{y_1} e^{-i\sigma+2k\log \sigma}d\sigma<(i-\frac{2k}{4M})^{-1}e^{-i4M+2k\log (2M) }.$$
Also clearly we have 
$$\int_M^{4M}e^{-i\sigma+2k\log \sigma}d\sigma>e^{-i(M+1)+2k\log(M+1) }.$$
Therefore $$\int_{M}^{y_1} e^{-i\sigma+2k\log \sigma}d\sigma<2\int_M^{4M}e^{-i\sigma+2k\log \sigma}d\sigma.$$

So for general holomorphic function $g=\sum_{i=1}^{\infty}a_iz_1^i$, we have 
\begin{eqnarray*}
   \int_{M\leq \sigma \leq y_1} |g|^{2}(\sigma)^{2k}\frac{\sqrt{-1}}{|z_1|^2}dz_1\wedge d\bar{z}_1 &=&\pi\sum_{i=1}^{\infty}|a_i|^2\int_M^{y_1} |z_1|^{2i}(\sigma)^{2k}d\sigma\\
   &\leq &2\int_{M\leq \sigma \leq 4M} |g|^{2}(\sigma)^{2k}\frac{\sqrt{-1}}{|z_1|^2}dz_1\wedge d\bar{z}_1
\end{eqnarray*}

Notice that for $\sigma>y$, we have $(\log |z_0|^2)^2=(2y-\sigma)^2<\sigma^2$. So we can apply the preceeding calculations to $f_+$ and to $f_-$ with $z_0$ replacing $z_1$ to get that \begin{eqnarray}\label{e-spm}
    \int_{U_j\cap X_t }\parallel (\tilde{s}_{\pm })_{t}\parallel^2_{\text{KE}}\omega_t^n&\leq &C^k\int_{U_j^{\delta_1}\cap X_t }\parallel (\tilde{s}_{\pm })_{t}\parallel^2_{\text{KE}}\omega_t^n\\ \label{ine-st-i}
    &\leq &2C^kM_1
\end{eqnarray}
for some contant $C$ independent of $t$ and $j$.
By the calculations preceeding equation \ref{e-0-D}, and since $\varphi_0$ is uniformly bounded, we get that $\exists c>0$ such that $$\int_{U_j\cap X_t }\parallel (\tilde{s}-\tilde{s}_{+ }-\tilde{s}_{- })_t\parallel^2_{\text{KE}}\omega_t^n>c^k b_k|\log |t||^{2k+1}\int_{U_j\cap D_\alpha}|a_0|^2e^{-(k+1)\psi_j}\vcal_j,$$ for every $j$. Therefore, \begin{equation}\label{e-tils-pm}
    \int_{U_j\cap X_t }\parallel \tilde{s}_t\parallel^2_{\text{KE}}\omega_t^n>c_k'\big|\log |t|\big|^{2k+1}\int_{U_j\cap D_\alpha}|a_0|^2e^{-(k+1)\psi_j}\vcal_j,
\end{equation}
 for $t$ large enough, for a constant $c_k'$. 

\

Let $d(q)$ denote the distance of a point $q$ to $D_\alpha$ defined by $\omega$, and let $\tau(q)=-\log d^2(p)$. In an appropriate coordinates patch $U_j$, we have $d^2(q)\approx (z_0,z_1)Q(\bar{z}_0,\bar{z}_1)^t$ with $Q$ a positive definite Hermitian matrix depending on $(z_2,\cdots,z_n)$. So $\exists C$ independent of $j$ such that $$\frac{1}{C}(|z_0|^2+|z_1|^2)<d^2(q)<C(|z_0|^2+|z_1|^2).$$
So $\exists C_1>0$ such that $$-C_1<\tau-\log (|z_0|^2+|z_1|^2)<C_1.$$
So when $|z_1|\geq |z_0|$, we have $$-C'_1<\tau-\sigma<C'_1.$$

So the $L_2$-norm of $\tilde{s}_t$ over the region $\ccal$ where $\tau>\frac{3}{2}M$ satisfies $$\parallel \tilde{s}_t\parallel^2_{\text{KE},\ccal}>c_k \big|\log |t|\big|^{2k+1},$$ for some constant $c_k$ depending on $k$. We denote by $\tilde{s}_{tn}=\frac{1}{\parallel \tilde{s}_t\parallel_{\text{KE},\ccal}}\tilde{s}_t$. Then by inequality \ref{ine-st-i} and equation \ref{e-y-sqrty}, we get that the total $L_2$-norm defined by $\omega_t$ over the region $\mcal_j=\{\sigma\big|M<\sigma<3M \text{ or } M<2y-\sigma<3M \}\subset U_j$ is $<\big|\log |t|\big|^{\frac{k}{2}} $.  Let $\ncal_\alpha$ denote the region $q\in\xcal$ satisfying $\frac{3}{2}M<\tau(q)<\frac{5}{2}M$. Then we have $\ncal_\alpha\cap U_j\subset \mcal_j$ for every $j$.
So we have 
\begin{proposition}
    There exists $c>0$ such that for $|t|$ small enough, we have
    $$\int_{\ncal_\alpha}\parallel \tilde{s}_{tn}\parallel^2_{\text{\text{KE}}}\omega_t^n<c\big|\log |t|\big|^{-k} $$
\end{proposition}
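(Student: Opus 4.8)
The plan is to read off the estimate by combining the three facts that have just been put in place, since the proposition is essentially a quantitative repackaging of them. Those facts are: (a) the lower bound $\parallel\tilde s_t\parallel^2_{\text{KE},\ccal}>c_k\big|\log|t|\big|^{2k+1}$ on the $L^2$-mass of $\tilde s_t$ over $\ccal=\{\tau>\frac{3}{2}M\}$ — this is exactly the factor we normalize by to form $\tilde s_{tn}$; (b) the bounded-shell bound that on each appropriate patch $U_j$ the $L^2$-mass $\int_{\mcal_j}\parallel\tilde s_t\parallel^2_{\text{KE}}\,\omega_t^n$ of $\tilde s_t$ is $<\big|\log|t|\big|^{k}$ for $|t|$ small, which is what inequality \ref{ine-st-i} together with \ref{e-y-sqrty} yields (at worst after squaring the stated bound $\big|\log|t|\big|^{k/2}$); and (c) the inclusion $\ncal_\alpha\cap U_j\subset\mcal_j$, valid once $M$ is taken large enough to absorb the bounded discrepancy between $\tau$ and $\sigma$ (resp.\ $2y-\sigma$) on $U_j$.

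Concretely, I would first fix a finite cover $\{U_j\}_{j=1}^{P}$ of the compact divisor $D_\alpha$ by appropriate coordinate patches. Since $\ncal_\alpha=\{\frac{3}{2}M<\tau<\frac{5}{2}M\}$ is a thin shell around $D_\alpha$, it is contained in $\bigcup_j U_j$, so $\ncal_\alpha=\bigcup_j(\ncal_\alpha\cap U_j)\subset\bigcup_j\mcal_j$ by (c). As $\parallel\tilde s_t\parallel^2_{\text{KE}}\geq 0$, summing (b) over the $P$ patches gives
$$\int_{\ncal_\alpha}\parallel\tilde s_t\parallel^2_{\text{KE}}\,\omega_t^n\;\leq\;\sum_{j=1}^{P}\int_{\mcal_j}\parallel\tilde s_t\parallel^2_{\text{KE}}\,\omega_t^n\;<\;P\,\big|\log|t|\big|^{k}.$$
Dividing by $\parallel\tilde s_t\parallel^2_{\text{KE},\ccal}$ and using $\tilde s_{tn}=\tilde s_t/\parallel\tilde s_t\parallel_{\text{KE},\ccal}$ with (a),
$$\int_{\ncal_\alpha}\parallel\tilde s_{tn}\parallel^2_{\text{KE}}\,\omega_t^n\;=\;\frac{1}{\parallel\tilde s_t\parallel^2_{\text{KE},\ccal}}\int_{\ncal_\alpha}\parallel\tilde s_t\parallel^2_{\text{KE}}\,\omega_t^n\;<\;\frac{P}{c_k}\,\big|\log|t|\big|^{\,k-2k-1}\;=\;\frac{P}{c_k}\,\big|\log|t|\big|^{-k-1}.$$
For $|t|$ small enough (so that $\big|\log|t|\big|>1$) the right-hand side is $<\frac{P}{c_k}\big|\log|t|\big|^{-k}$, which is the claim with $c=P/c_k$.

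I do not anticipate a genuine obstacle; the only point that needs care is the uniformity of the constants. One must be sure that the bounded-shell estimate (b) holds with a constant independent of $j$, which in turn relies on the coordinate functions distorting $\sigma$ by only a bounded amount on overlaps $U_i\cap U_j$ — precisely the property $\big|\log(|z_1'|/|z_1|)\big|<C$ built into the definition of an appropriate coordinate patch — so that passing to the finite subcover merely costs the harmless factor $P$. Given that, the remainder is the elementary exponent bookkeeping above; note that it in fact produces the stronger decay $\big|\log|t|\big|^{-k-1}$ (and one could push further, since the $L^2$-mass of $\tilde s_t$ over each $\mcal_j$ is actually bounded in $t$), but only $\big|\log|t|\big|^{-k}$ is recorded because that is what is used later.
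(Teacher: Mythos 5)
Your proposal is correct and follows essentially the same route as the paper: the paper derives the proposition by exactly the three facts you cite --- the lower bound $\parallel\tilde s_t\parallel^2_{\text{KE},\ccal}>c_k\big|\log|t|\big|^{2k+1}$, the bound on the $L^2$-mass over $\mcal_j$ coming from inequality (3.3) and equation (2.4), and the inclusion $\ncal_\alpha\cap U_j\subset\mcal_j$ --- and then simply divides. Your write-up merely makes explicit the finite cover, the summation over patches, and the exponent bookkeeping that the paper leaves implicit in its ``So we have'' transition.
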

To not deal with the points not close to $D_\alpha$, we use H\"ormander's $L^2$ estimates. The following lemma is well-known, see for example \cite{Tian1990On}. 

\begin{lem}
    Suppose $(M,g)$ is a complete \kahler manifold of complex dimension $n$, $\mathcal L$ is a line bundle on $M$ with Hermitian metric $h$. If 
    $$\langle-2\pi i \Theta_h+Ric(g),v\wedge \bar{v}\rangle_g\geq C|v|^2_g$$
    for any tangent vector $v$ of type $(1,0)$ at any point of $M$, where $C>0$ is a constant and $\Theta_h$ is the curvature form of $h$. Then for any smooth $\mathcal L$-valued $(0,1)$-form $\alpha$ on $M$ with $\bar{\partial}\alpha=0$ and $\int_M|\alpha|^2dV_g$ finite, there exists a smooth $\mathcal L$-valued function $\beta$ on $M$ such that $\bar{\partial}\beta=\alpha$ and $$\int_M |\beta|^2dV_g\leq \frac{1}{C}\int_M|\alpha|^2dV_g$$
    where $dV_g$ is the volume form of $g$ and the norms are induced by $h$ and $g$.
\end{lem}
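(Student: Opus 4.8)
The statement to be proved is the standard H\"ormander $L^2$-estimate for $\bar\partial$, and the plan is to reduce it to the case where $\mathcal{L}$ carries a metric whose curvature is strictly positive in the appropriate sense, and then to invoke the Bochner--Kodaira--Nakano identity together with the completeness of $(M,g)$. First I would set up the Hilbert space framework: let $L^2_{(0,q)}(M,\mathcal{L})$ denote the space of square-integrable $\mathcal{L}$-valued $(0,q)$-forms, so that $\bar\partial$ acts as a densely defined closed operator with adjoint $\bar\partial^*$. The hypothesis $\langle -2\pi i\Theta_h + \Ric(g), v\wedge\bar v\rangle_g \ge C|v|^2_g$ is precisely the positivity needed to make the curvature term in the Bochner--Kodaira--Nakano formula bounded below by $C$ on $(0,1)$-forms. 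The key analytic input is the basic estimate
$$
\|\bar\partial u\|^2 + \|\bar\partial^* u\|^2 \ge C\|u\|^2
$$
for $u$ in the domain of both operators, valid on $(0,1)$-forms; here completeness of $g$ is used to guarantee that smooth compactly supported forms are dense in the graph norm, so that the formula proved by integration by parts for compactly supported forms extends to the full domain.

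The main steps, in order, would be: (1) recall the Bochner--Kodaira--Nakano identity $\Box_{\bar\partial} = \Box_\partial + [\,i\Theta_h + \Ric(g)\otimes\mathrm{Id}, \Lambda\,]$ acting on $\mathcal{L}$-valued forms, where the commutator with $\Lambda$ produces exactly the curvature pairing appearing in the hypothesis; (2) for a compactly supported smooth $(0,1)$-form $u$, integrate this identity to obtain $\|\bar\partial u\|^2 + \|\bar\partial^* u\|^2 \ge \int_M \langle -2\pi i\Theta_h + \Ric(g), u, u\rangle \ge C\|u\|^2$; (3) use completeness of $(M,g)$ to extend this inequality from compactly supported forms to all $u \in \mathrm{Dom}(\bar\partial)\cap\mathrm{Dom}(\bar\partial^*)$, via a cutoff-function exhaustion argument (Gaffney's trick), so that the estimate holds on the full Hilbert space domain; (4) given $\alpha$ with $\bar\partial\alpha = 0$ and $\|\alpha\|^2 < \infty$, decompose $\alpha$ into its component orthogonal to $\ker\bar\partial^*$; since $\bar\partial\alpha = 0$, one checks that for any $u \in \mathrm{Dom}(\bar\partial^*)$ the pairing $\langle\alpha, u\rangle$ depends only on $\bar\partial^* u$, and by the basic estimate $|\langle\alpha,u\rangle|^2 \le \|\alpha\|^2\|u\|^2 \le \tfrac{1}{C}\|\alpha\|^2\|\bar\partial^* u\|^2$; (5) apply the Riesz representation theorem to the bounded functional $\bar\partial^* u \mapsto \langle u,\alpha\rangle$ on the range of $\bar\partial^*$ to produce $\beta \in L^2(M,\mathcal{L})$ with $\bar\partial\beta = \alpha$ (in the distributional sense) and $\|\beta\|^2 \le \tfrac{1}{C}\|\alpha\|^2$; (6) invoke elliptic regularity for $\bar\partial$ (hypoellipticity, since $\alpha$ is smooth) to conclude $\beta$ is smooth.

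The step I expect to be the main obstacle, or at least the one requiring the most care, is step (3): the extension of the basic estimate from compactly supported forms to the full operator domain. This is where completeness of $(M,g)$ is essential and where one must be careful that the cutoff functions $\chi_\nu$ with $|d\chi_\nu| \to 0$ exist (a consequence of completeness) and that the error terms they introduce, which involve $[\bar\partial, \chi_\nu]$ and its adjoint, genuinely tend to zero in $L^2$ after using that $u$ and $\bar\partial u$, $\bar\partial^* u$ are all square-integrable. Since this lemma is quoted as well-known with a reference to \cite{Tian1990On}, in the paper itself one would simply cite it; but the honest proof runs through exactly this density/completeness argument, originally due to H\"ormander and Andreotti--Vesentini. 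Everything else — the Bochner--Kodaira--Nakano identity and the functional-analytic duality argument — is formal once the basic estimate is in hand.
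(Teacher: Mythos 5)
Your outline is the standard H\"ormander--Andreotti--Vesentini argument (Bochner--Kodaira--Nakano identity, extension of the basic estimate to the full domain via completeness and cutoffs, then duality plus Riesz representation and elliptic regularity), and it is correct; the paper itself gives no proof, quoting the lemma as well-known with a reference to \cite{Tian1990On}, which is exactly the argument you reconstruct. You have also correctly identified the only delicate point, namely the density step that uses completeness of $g$.
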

In our setting with $(M,g)=(X_t,\omega_t)$ and the line bundle is $kL$, so for 
$k$ large, the assumption of the Lemma is satisfied.

We define a cut off function $\varpi(\eta) $ of one variable satsifying the following:
\begin{itemize}
    \item $\varpi(\eta)=1 $ for $\eta\geq \frac{5}{2}M$;
    \item $\varpi(\eta)=0 $ for $\eta\leq \frac{3}{2}M$;
    \item $0\leq \varpi'(\eta)<1/2 $.
\end{itemize}  
Then we want to solve the equation $$\dbar v=\dbar\varpi(\tau) \otimes\tilde{s}_{tn},$$ on $X_t$. By theorem \ref{thm-tian}, one sees that on the region where $M<\sigma<3M $ on $U_j$, $\exists C_1>0$ such that $$|\dbar\varpi(\tau)|<C_1 |\dbar \sigma|. $$
So $|\dbar\varpi(\tau)|^2<C_2M^2$ for some $C_2>0$.
And similarly for the region where $ M<2y-\sigma<3M $.
So we have $$\int_{X_t}\parallel \dbar\varpi(\tau) \otimes\tilde{s}_{tn}\parallel^2_{\text{KE}}\omega_t^n<c_M\big|\log |t|\big|^{-k},  $$
for some constant $c_M$ depending only on $M$. Therefore, we can find a solution $v\in C^{\infty}(X_t,kL)$ satisfying  
$$\int_{X_t}\parallel v\parallel^2_{\text{KE}}\omega_t^n<c_M\big|\log |t|\big|^{-k}. $$
And we get a new holomorphic pluri-canonical form $\triangleq \varpi(\tau)\tilde{s}_{tn}-v$. Then we define
$$\tilde{s}_{t,\text{mod}}\triangleq \parallel \tilde{s}_t\parallel_{\text{KE},\ccal}(\varpi(\tau)\tilde{s}_{tn}-v) .$$
If we replace $\sqrt{y}$ in equation \ref{e-y-sqrty} by $\sqrt{y}+c$, similar estimate holds. So we also have 

\begin{proposition}\label{prop-sqrty}
    $$\int_{X_t,\tau<\sqrt{y}}\parallel \tilde{s}_{t,\text{mod}}\parallel^2_{\text{KE}}\omega_t^n=O(\big|\log |t|\big|^{k+1})  $$
    
\end{proposition}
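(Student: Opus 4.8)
The plan is to control the $L^2$-mass of $\tilde{s}_{t,\mathrm{mod}}$ over the region $\{\tau < \sqrt{y}\}$ by separating the genuinely modified part from the original section. Recall that
$$\tilde{s}_{t,\mathrm{mod}} = \|\tilde{s}_t\|_{\mathrm{KE},\ccal}\,\big(\varpi(\tau)\tilde{s}_{tn} - v\big) = \varpi(\tau)\tilde{s}_t - \|\tilde{s}_t\|_{\mathrm{KE},\ccal}\, v.$$
The contribution of $v$ is harmless: by construction $\int_{X_t}\|v\|^2_{\mathrm{KE}}\omega_t^n < c_M|\log|t||^{-k}$ and $\|\tilde{s}_t\|^2_{\mathrm{KE},\ccal} = O(|\log|t||^{2k+1})$, so $\|\tilde{s}_t\|^2_{\mathrm{KE},\ccal}\int\|v\|^2 = O(|\log|t||^{k+1})$, which is already within the asserted bound. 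Since $0\le\varpi\le 1$, what remains is to show
$$\int_{X_t,\ \tau<\sqrt{y}} \|\tilde{s}_t\|^2_{\mathrm{KE}}\,\omega_t^n = O\big(|\log|t||^{k+1}\big).$$

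First I would reduce the region $\{\tau < \sqrt{y}\}$ to a union over the finitely many coordinate patches $U_j$ covering each $D_\alpha$, together with the complementary region bounded away from $D$. On the bounded-away region the \kae metric is uniformly comparable (via Theorem \ref{thm-tian} and the Cheeger--Gromov convergence) to a fixed smooth metric, and there $\|\tilde{s}_t\|^2_{\mathrm{KE}}$ is integrated against a bounded volume, giving an $O(1)$ contribution. Inside a patch $U_j$, using the terminology and the computation leading to \eqref{e-f-Omega}, writing $\tilde{s}=f(dz_0\wedge\cdots\wedge dz_n)^{\otimes(k+1)}$ with $f=\sum_{i\in\Z}a_iz_1^i$, Corollary \ref{cor-wt-bound} reduces the $L^2$-norm over $\{M<\sigma<\sqrt{y}+c\}$ (the part of $\{\tau<\sqrt{y}\}$ in $U_j$, up to the uniform comparison $|\tau-\sigma|<C_1'$ when $|z_1|\ge|z_0|$, and symmetrically with $z_0$) to estimating $\int_M^{\sqrt{y}+c}|z_1|^{2i}\sigma^{2k}\,d\sigma$.

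Next I would bound these one-variable integrals. For $i=0$ the integral is $\int_M^{\sqrt{y}+c}\sigma^{2k}\,d\sigma = \frac{(\sqrt{y}+c)^{2k+1}-M^{2k+1}}{2k+1} = O(y^{k+1/2}) = O(|\log|t||^{k+1/2})$, which after multiplying by the finitely many patches and the coefficient $|a_0|^2$ (bounded by the norm estimates $\|\tilde s_{0,i}\|^2_{h,X_{0,i}} \le C_3/k$) still sits inside $O(|\log|t||^{k+1})$. For the terms with $i\ne 0$, the exponent $-i\sigma + 2k\log\sigma$ is concave, so exactly as in the estimates preceding \eqref{e-spm} the integral over $\{M<\sigma<\sqrt{y}+c\}$ is comparable to the integral over the fixed compact window $\{M<\sigma<4M\}$, hence bounded by $C^k$ times a fixed quantity uniformly in $t$; summing over $i$ and invoking \eqref{e-spm}--\eqref{ine-st-i} and \eqref{e-tils-pm} this is $O(1)$. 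Combining the $i=0$ and $i\ne 0$ contributions over all patches and all $\alpha$ gives the patch estimate $O(|\log|t||^{k+1})$, and together with the $v$-term and the bounded-away region this yields the proposition.

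The main obstacle I anticipate is the bookkeeping at the cutoff scale $\sigma \approx \sqrt{y}$: I must make sure that replacing the cutoff radius $\sqrt{y}$ by $\sqrt{y}+c$ (coming from the width of the transition region of $\varpi$ expressed in the $\tau$-variable versus the $\sigma$-variable, and from the $|\tau-\sigma|<C_1'$ slack) does not change the order of the estimate — this is precisely the remark in the statement that "similar estimate holds" for \eqref{e-y-sqrty} with $\sqrt{y}$ replaced by $\sqrt y + c$, and it should follow because $(\sqrt y+c)^{2k+1}/y^{2k+1}\to 0$ just as fast. A secondary subtlety is that the patches $U_j$ overlap, so one should either use a partition of unity subordinate to $\{U_j\}$ or simply note that the overlaps only inflate the bound by a fixed combinatorial constant depending on the (finite) cover, which is absorbed into the implied constant.
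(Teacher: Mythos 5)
Your overall strategy is the same as the paper's: split $\tilde{s}_{t,\text{mod}}=\varpi(\tau)\tilde{s}_t-\parallel\tilde{s}_t\parallel_{\text{KE},\ccal}\,v$, absorb the $v$-term by pairing the H\"ormander bound $\int_{X_t}\parallel v\parallel^2_{\text{KE}}\omega_t^n\leq c_M\big|\log|t|\big|^{-k}$ with $\parallel\tilde{s}_t\parallel^2_{\text{KE},\ccal}=O(\big|\log|t|\big|^{2k+1})$, and control the remaining term via the one-variable estimate $\int_M^{\sqrt{y}+c}\sigma^{2k}d\sigma=O(y^{k+1/2})$ plus the concavity argument of Lemma \ref{lem-concave} for the $f_\pm$ parts. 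This is precisely the content of the paper's one-line justification ("replace $\sqrt{y}$ by $\sqrt{y}+c$ in \eqref{e-y-sqrty}"), and those estimates, including the observation that the $v$-term is what forces the exponent $k+1$ rather than $k+1/2$, are carried out correctly.

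There is, however, one step that fails as written when $D=\text{Sing}(X_0)$ has more than one irreducible component. Once you say "since $0\le\varpi\le1$, it remains to show $\int_{\tau<\sqrt{y}}\parallel\tilde{s}_t\parallel^2_{\text{KE}}\omega_t^n=O(\big|\log|t|\big|^{k+1})$", you have discarded the cutoff, and that reduced claim is not true in general. Here $\tau=\tau_\alpha$ is the distance to the fixed component $D_\alpha$ (see the remark following the proposition), so $\{\tau<\sqrt{y}\}$ contains entire neighborhoods of every other component $D_\beta$, $\beta\neq\alpha$. On such a neighborhood the computation leading to \eqref{e-tils-pm} applies verbatim with $D_\beta$ in place of $D_\alpha$ and gives a lower bound of order $\big|\log|t|\big|^{2k+1}\int_{D_\beta}|a_0|^2(\cdots)$, where $a_0$ is essentially the restriction of the minimal extension $\tilde{s}$ to $D_\beta$; nothing forces that restriction to vanish, so this is generically $\asymp\big|\log|t|\big|^{2k+1}$, not $O(\big|\log|t|\big|^{k+1})$. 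Your two sub-cases miss exactly these neighborhoods: they are not bounded away from $D$ (so Theorem \ref{thm-tian} gives no $O(1)$ bound there), and in a patch around $D_\beta$ the constraint $\tau_\alpha<\sqrt{y}$ imposes no restriction on $\sigma$, so the integral runs over all of $(M,2y-M)$ rather than $(M,\sqrt{y}+c)$. The repair is to keep the cutoff: $\varpi(\tau_\alpha)$ vanishes off $\{\tau_\alpha\geq\tfrac{3}{2}M\}$, so $\varpi(\tau_\alpha)\tilde{s}_t$ restricted to $\{\tau_\alpha<\sqrt{y}\}$ is supported in the annulus $\{\tfrac{3}{2}M\leq\tau_\alpha<\sqrt{y}\}$ around $D_\alpha$ alone, which is exactly where your $\sigma\in(M,\sqrt{y}+c)$ analysis is valid; everywhere else in $\{\tau_\alpha<\sqrt{y}\}$, in particular near each $D_\beta$, only the $v$-term survives, and it is already within the asserted bound.
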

\begin{remark}
    We should mention that till now, for simplicity, we have been using $\tau(d(q))$  for the distance to $D_\alpha$ for a fixed $\alpha$. Later we will also use $\tau(d(q))$ for the distance to $D$. At that time, we will refer to the former one as $\tau_\alpha$.  
\end{remark}

Now we calculate $\parallel \tilde{s}_{t_l}\parallel_{\text{KE},\ccal}$ when $\{t_l\}$ is a good sequence. And for simplicity, we shrink $U_j$ so that $-\log |z_0|^2>M$ and $-\log |z_1|^2>M$ for $z\in U_j$. 

Since the image $P_t(G_K\times (U_i\cap D_\alpha))$, where $K=[\delta,1-\delta]$, is $\{z\in U_j\cap X_t\big| -\delta\log |t|<-\log |z_1|<-(1-\delta)\log |t|   \}$, and since $$\lim_{\delta\to 0}\frac{\int_{M}^{-\delta\log |t|  }\sigma^{2k}d\sigma}{\int_{M}^{-\frac{1}{2}\log |t|  }\sigma^{2k}d\sigma}=0,$$ by symmetry, we have that $$\lim_{l\to \infty}\frac{-1}{(\log |t_l|)^{2k+1}}\int_{U_j}\parallel \tilde{s}_{t_l}\parallel_{\text{KE}}^2 \omega_t^n=\int_{U_j\cap D_\alpha}\parallel s(p) \parallel^2_{\vcal}d\nu_{\{t_l\},\alpha,k}.  $$
The same holds for any sub-polydisc of $U_j$. Therefore, we get the following conclusion.
\begin{proposition}\label{prop-inner}
    Let $\{t_l\}$ be a good sequence. Let $N_{\alpha,t}=\{q\in X_t\big|\tau(q)>\frac{3}{2}M  \}$ and let $$\parallel s \parallel_{\{t_l \},D_\alpha}$$ denote the $L_2$-norm defined by $\vcal$ and $d\nu_{\{t_l\},\alpha,k}$, namely,
    $$\parallel s \parallel_{\{t_l \},D_\alpha}^2=\int_{D_\alpha}\parallel s(p) \parallel^2_{\vcal}d\nu_{\{t_l\},\alpha,k}$$
    for $s\in H^0(D_\alpha,(k+1)L)$. Then we have 
    $$\lim_{l\to \infty}\frac{-1}{(\log |t_l|)^{2k+1}}\int_{N_{\alpha,t_l}}\parallel \tilde{s}_{t_l}\parallel_{\text{KE}}^2 \omega_t^n= \parallel s \parallel_{\{t_l \},D_\alpha}^2.$$
  
\end{proposition}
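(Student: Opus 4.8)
The plan is to reduce everything to the single-chart asymptotics established in the paragraph preceding the statement and then to patch over the $U_j$ with a partition of unity on a fixed tubular neighborhood of $D_\alpha$.

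\textbf{Setup.} Fix $M$ large enough that Corollary~\ref{cor-wt-bound} applies, that $M>2k$, and that the fixed tube $\{q\in\xcal:\tau_\alpha(q)>\tfrac32 M\}$ is contained in $\bigcup_j U_j$; shrink each $U_j$ in the $z_0,z_1$-directions so that $-\log|z_0|^2>M$ and $-\log|z_1|^2>M$ on it, so that $U_j\cap X_t=\{M<\sigma<2y-M\}$ in the $z_1$-fiber (here $y=-\tfrac12\log|t|^2$). Using $-C_1'<\tau_\alpha-\sigma<C_1'$ on $\{|z_1|\geq|z_0|\}$ (and its $z_0$-analogue) one has $N_{\alpha,t}\subset\bigcup_j(U_j\cap X_t)$ for $|t|$ small, and the difference $\big(\bigcup_j(U_j\cap X_t)\big)\setminus N_{\alpha,t}$ meets each $U_j$ only where $\sigma$ or $2y-\sigma$ lies in a bounded range; there, by Corollary~\ref{cor-wt-bound} together with \eqref{ine-st-i}, the $L^2$-mass of $\tilde s_t$ is $O(1)$ uniformly in $t$ (the ``$\pm$'' part contributes $\leq 2C^kM_1$ and the ``$a_0$'' part contributes a constant times $\int_M^A\sigma^{2k}\,d\sigma=O(1)$). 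Since $\tfrac{-1}{(\log|t_l|)^{2k+1}}\to0$, it follows from the single-chart limit displayed just before the statement (which already includes the ``$\pm$'' part and the cutoff of the neck-ends) that, for every $j$,
$$\lim_{l\to\infty}\frac{-1}{(\log|t_l|)^{2k+1}}\int_{N_{\alpha,t_l}\cap U_j}\|\tilde s_{t_l}\|^2_{\text{KE}}\,\omega_{t_l}^n=\int_{U_j\cap D_\alpha}\|s(p)\|^2_{\vcal}\,d\nu_{\{t_l\},\alpha,k},$$
and likewise with $U_j$ replaced by any of its sub-polydiscs. Approximating a continuous function on $D_\alpha$ uniformly by finite combinations of indicators of sub-polydiscs, the same holds with any fixed continuous weight depending only on $(z_2,\dots,z_n)$ inserted on both sides.

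\textbf{Patching.} Choose a smooth partition of unity $\{\eta_j\}$ on a fixed neighborhood of $D_\alpha$ that contains the tube $\{\tau_\alpha>\tfrac32 M\}$, subordinate to $\{U_j\}$, with $\sum_j\eta_j\equiv1$ there; then for $|t|$ small
$$\int_{N_{\alpha,t}}\|\tilde s_t\|^2_{\text{KE}}\,\omega_t^n=\sum_j\int_{N_{\alpha,t}\cap U_j}\eta_j\,\|\tilde s_t\|^2_{\text{KE}}\,\omega_t^n.$$
In the chart $U_j$ write $\eta_j=\eta_j|_{D_\alpha}(z_2,\dots,z_n)+\rho_j$ with $|\rho_j|\leq C(|z_0|+|z_1|)$ (valid since $\eta_j$ is smooth and $U_j\cap D_\alpha=\{z_0=z_1=0\}$). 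On $\{|z_1|\geq|z_0|\}$, where $|z_1|=e^{-\sigma/2}$ and $|z_0|=|t|e^{\sigma/2}$, the contribution of $\rho_j$ to the normalized integral is controlled by $\int_M^{y}e^{-\sigma/2}\sigma^{2k}\,d\sigma<\infty$ and $|t|\int_M^{y}e^{\sigma/2}\sigma^{2k}\,d\sigma=O\big(|t|^{1/2}\big|\log|t|\big|^{2k}\big)\to0$ (together with \eqref{ine-st-i} for the ``$\pm$'' part), and similarly on $\{|z_0|\geq|z_1|\}$ with $2y-\sigma$ in place of $\sigma$; hence it is negligible. Applying the weighted single-chart limit of the previous paragraph to the $(z_2,\dots,z_n)$-only weight $\eta_j|_{D_\alpha}$ gives
$$\lim_{l\to\infty}\frac{-1}{(\log|t_l|)^{2k+1}}\int_{N_{\alpha,t_l}\cap U_j}\eta_j\,\|\tilde s_{t_l}\|^2_{\text{KE}}\,\omega_{t_l}^n=\int_{U_j\cap D_\alpha}\eta_j|_{D_\alpha}\,\|s(p)\|^2_{\vcal}\,d\nu_{\{t_l\},\alpha,k},$$
and summing over $j$, using $\sum_j\eta_j|_{D_\alpha}\equiv1$ on $D_\alpha$, produces exactly $\int_{D_\alpha}\|s(p)\|^2_{\vcal}\,d\nu_{\{t_l\},\alpha,k}=\|s\|^2_{\{t_l\},D_\alpha}$.

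\textbf{Main difficulty.} There is little genuine difficulty: all the substantive input---the collapsing estimates of Zhang (Lemma~\ref{lem-zhang-3.2}, Theorem~\ref{the-zhang-3.3}, Corollary~\ref{cor-wt-bound}), the uniform bound \eqref{ine-st-i} on the ``$\pm$'' part, and the single-chart limit manufacturing $d\nu_{\{t_l\},\alpha,k}$ out of \eqref{e-0-D}---is already in place, and the remainder is bookkeeping. The only points needing care are that the three replacements in the patching (dropping $\tilde s_++\tilde s_-$, replacing $\eta_j$ by $\eta_j|_{D_\alpha}$, and replacing $N_{\alpha,t}$ by $\bigcup_j(U_j\cap X_t)$) each cost an $L^2$-amount that is $o\big(\big|\log|t|\big|^{2k+1}\big)$ uniformly in $t$; all three reduce to the one-dimensional estimates $\int\sigma^{2k}e^{-c\sigma}\,d\sigma<\infty$ and $\int_M^A\sigma^{2k}\,d\sigma=O(1)$ already used in the text.
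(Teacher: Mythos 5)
Your argument is correct and follows essentially the paper's own route: the entire substance is the single-chart limit $\lim_{l}\frac{-1}{(\log|t_l|)^{2k+1}}\int_{U_j}\|\tilde s_{t_l}\|^2_{\text{KE}}\,\omega_{t_l}^n=\int_{U_j\cap D_\alpha}\|s(p)\|^2_{\vcal}\,d\nu_{\{t_l\},\alpha,k}$ established in the paragraph preceding the statement, combined with patching over the cover $\{U_j\}$ and discarding the $O(1)$ mass near the neck ends. The only divergence is cosmetic: the paper patches by invoking the same limit on arbitrary sub-polydiscs of $U_j$ (an essentially disjoint decomposition of the tube around $D_\alpha$), while you use a partition of unity with an explicit $O(|z_0|+|z_1|)$ error term and additionally spell out the comparison between $N_{\alpha,t}$ and $\bigcup_j(U_j\cap X_t)$, a step the paper leaves implicit.
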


We normalize the left hand side of the formula by 
$$\parallel \cdot \parallel^2_{t,\text{KEN}}=\frac{-1}{(\log |t|)^{2k+1}}\int_{X_t}\parallel \cdot \parallel_{\text{KE}}^2 \omega_t^n,$$ for sections in $H_0(X_t,(k+1)K_{X_t})$. Then we have the following.
\begin{corollary}\label{cor-inn-ai}
    For $s\in H^0(D_\alpha,(k+1)L)$ such that $\parallel s\parallel_{\{t_l \},D_\alpha}=1$, we have
    $$\lim_{l\to \infty}\parallel \tilde{s}_{t_l,\text{mod}} \parallel^2_{t_l,\text{KEN}}=1.$$
\end{corollary}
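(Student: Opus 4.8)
The plan is to combine the two ingredients that have already been assembled: the inner-region mass estimate in Proposition \ref{prop-inner}, which identifies $\lim_{l\to\infty}\|\cdot\|^2_{t_l,\mathrm{KEN}}$ computed over $N_{\alpha,t_l}$ with the limiting norm $\|s\|^2_{\{t_l\},D_\alpha}$, and the cut-off/H\"ormander correction $\tilde{s}_{t,\mathrm{mod}} = \|\tilde{s}_t\|_{\mathrm{KE},\ccal}(\varpi(\tau)\tilde{s}_{tn}-v)$, which modifies $\tilde{s}_t$ only in the region $\tau < \frac52 M$ and whose error $v$ satisfies $\int_{X_t}\|v\|^2_{\mathrm{KE}}\omega_t^n < c_M|\log|t||^{-k}$. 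So the statement is, morally, that dividing the $L^2$-mass of $\tilde{s}_{t_l,\mathrm{mod}}$ by $-(\log|t_l|)^{2k+1}$ kills everything except the contribution from $N_{\alpha,t_l}$, and on $N_{\alpha,t_l}$ the modification is invisible (since $\varpi\equiv 1$ there and $v$ is tiny), leaving exactly $\|\tilde{s}_{t_l}\|^2_{\mathrm{KE},\ccal}$ times something converging to $\|s\|^2_{\{t_l\},D_\alpha}=1$.

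First I would split $\|\tilde{s}_{t_l,\mathrm{mod}}\|^2_{t_l,\mathrm{KEN}}$ into the integral over $\{\tau > \frac52 M\}$ (equivalently over a region containing $N_{\alpha,t_l}$ up to a negligible collar) and the integral over $\{\tau \le \frac52 M\}$. On the first region $\varpi(\tau)=1$, so $\tilde{s}_{t_l,\mathrm{mod}} = \|\tilde{s}_{t_l}\|_{\mathrm{KE},\ccal}(\tilde{s}_{t_l n} - v) = \tilde{s}_{t_l} - \|\tilde{s}_{t_l}\|_{\mathrm{KE},\ccal}\, v$; using $\|\tilde{s}_{t_l}\|^2_{\mathrm{KE},\ccal} = O(|\log|t_l||^{2k+1})$ (from the lower and upper bounds preceding Proposition \ref{prop-inner}) together with the $L^2$-bound on $v$, the cross term and the $v$-term contribute $O(|\log|t_l||^{2k+1}\cdot |\log|t_l||^{-k})/|\log|t_l||^{2k+1} = O(|\log|t_l||^{-k})\to 0$ after normalization, so this piece has the same limit as $-(\log|t_l|)^{-2k+1}\int_{\tau>\frac52 M}\|\tilde{s}_{t_l}\|^2_{\mathrm{KE}}\omega_{t_l}^n$, which by Proposition \ref{prop-inner} (adjusting the threshold $\frac32 M$ versus $\frac52 M$ costs only the mass of the collar $\ncal_\alpha$, already shown to be $O(|\log|t_l||^{-k})$ after normalization) equals $\|s\|^2_{\{t_l\},D_\alpha} = 1$.

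Then I would show the integral over $\{\tau \le \frac52 M\}$ is negligible after normalization. Here $\tilde{s}_{t_l,\mathrm{mod}}$ is $\|\tilde{s}_{t_l}\|_{\mathrm{KE},\ccal}(\varpi(\tau)\tilde{s}_{t_l n} - v)$; since $0\le\varpi\le 1$ and $\|\tilde{s}_{t_l n}\|_{\mathrm{KE},\ccal}=1$, the contribution of the $\varpi\tilde{s}_{t_l n}$ part over $\{\tau\le \frac52 M\}$ is controlled by the $L^2$-mass of $\tilde{s}_{t_l n}$ over $\{\tau<\frac52 M\}$ — but one must be a bit careful, because $\tilde{s}_{t_l n}$ itself is not small there; rather the point is that Proposition \ref{prop-sqrty} gives $\int_{\tau<\sqrt{y}}\|\tilde{s}_{t_l,\mathrm{mod}}\|^2_{\mathrm{KE}}\omega_{t_l}^n = O(|\log|t_l||^{k+1})$, and the region $\{\tau\le\frac52 M\}$ is contained in $\{\tau<\sqrt{y}\}$ for $|t_l|$ small, so directly $-(\log|t_l|)^{-2k+1}\int_{\tau\le\frac52 M}\|\tilde{s}_{t_l,\mathrm{mod}}\|^2_{\mathrm{KE}}\omega_{t_l}^n = O(|\log|t_l||^{k+1}/|\log|t_l||^{2k+1}) = O(|\log|t_l||^{-k})\to 0$. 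Adding the two pieces gives the claimed limit $1$.

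The main obstacle I expect is bookkeeping the three slightly different thresholds ($\frac32 M$ in Proposition \ref{prop-inner}, $\frac52 M$ in the cut-off, and $\sqrt{y}$ in Proposition \ref{prop-sqrty}) and checking that the transition region $\ncal_\alpha$ contributes nothing after normalization — but this is exactly the content of the proposition stating $\int_{\ncal_\alpha}\|\tilde{s}_{tn}\|^2_{\mathrm{KE}}\omega_t^n < c|\log|t||^{-k}$, so it is really just a matter of assembling inequalities already proved, plus one application of Cauchy--Schwarz for the cross term $\langle \tilde{s}_{t_l}, v\rangle$. No genuinely new estimate is needed; the work is organizational.
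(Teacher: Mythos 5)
Your proposal is correct and follows the same route the paper intends: the paper states the corollary as an immediate consequence of Proposition \ref{prop-inner}, Proposition \ref{prop-sqrty}, the collar estimate on $\ncal_\alpha$, and the $L^2$ bound on the $\dbar$-correction $v$, and your write-up is exactly that assembly (split at a fixed threshold, Cauchy--Schwarz on the cross term with $v$, and the normalization by $-(\log|t_l|)^{2k+1}$ killing everything of order $O(|\log|t_l||^{k+1})$). No gap; the threshold bookkeeping you flag is handled precisely by the estimates you cite.
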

The corollary implies that the map $s\mapsto \tilde{s}_{t_l,\text{mod}} $ from $(H^0(D_\alpha,(k+1)L),\parallel \cdot\parallel_{\{t_l \},D_\alpha})$ to $(H_0(X_{t_l},(k+1)K_{X_{t_l}}),\parallel \cdot \parallel^2_{t_l,\text{KEN}})$ are almost-isometric embeddings for $l$ large.

\subsection{Outer sections}



We denote by $S_{\hat{i} }=\otimes_{j\neq i}S_j\in H^0(\xcal, \sum_{j\neq i}[X_{0,j}])$. 
For each $s^{i1}\in \gcal_{0,i,k+1}$, $$a_i=\frac{s^{i1}}{S_{\hat{i} }}\in H^0(X_{0,i},(k+1)L-\sum_{j\neq i}[X_{0,j}]).$$
Then let $\tilde{a_i}$ be the minimal extension to $\xcal$ of $a_i$. Then we have a section $\tilde{a_i}\otimes S_{\hat{i} }$ in $H^0(\xcal, (k+1)L)$ whose restriction to $X_0$ is $s^{i1}$. Then we denote by $\tilde{s^{i1}}_t\in \hcal_{t,k+1}$ the restriction of $\tilde{a_i}\otimes S_{\hat{i} }$ to $X_t$.

Similarly, for each $s^{i2}\in \hcal_{0,i,k+1,2}$, $$b_i=\frac{s^{i2}}{S^{\otimes 2}_{\hat{i} }}\in H^0(X_{0,i},(k+1)L-2\sum_{j\neq i}[X_{0,j}]).$$
Then let $\tilde{b_i}$ be the minimal extension to $\xcal$ of $b_i$. Then we have a section $\tilde{b_i}\otimes S^{\otimes 2}_{\hat{i} }$ in $H^0(\xcal, (k+1)L)$ whose restriction to $X_0$ is $s^{i2}$. Then we denote by $\tilde{s^{i2}}_t\in \hcal_{t,k+1}$ the restriction of $\tilde{b_i}\otimes S^{\otimes 2}_{\hat{i} }$ to $X_t$.

Fix an orthonormal basis $\bcal_{0,i,1}=\{s^{i1}_l\}$ for $\gcal_{0,i,k+1}$ and $\bcal_{0,i,2}=\{s^{i2}_l\}$ for $\hcal_{0,i,k+1,2}$ for all $i$. We get sections $\bcal_{1t,1}=\{\tilde{s}^{i1}_{t,l}\}$ and $\bcal_{1t,2}=\{\tilde{s}^{i2}_{t,l}\}$ in $\hcal_{t,k+1}$. We then give these sections $\bcal_{1t}=\bcal_{1t,1}\cup \bcal_{1t,2}$ an arbitrary order. And we denote by $\qcal_{1t}$ the Hermitian matrix defined by the inner products of these sections defined by $\omega_t$. We now show that
$$\lim_{t\to 0}\qcal_{1t}=I,$$
namely these sections are almost orthonormal.

\

For a fixed $\delta>0$, we denote by $X_t^{\delta}$ the points on $X_t$ whose distance to $D$ is bigger than $\delta$. We can similarly define $X_{0,i}^{\delta}$. 
Then when $t$ is small enough, for each point $q$ on $X_t^{\delta}$, there is exactly one $i$ such that $d(q,X_{0,i})=d(q,X_{0})$. So $X_t^{\delta}$ has $m$ connected components $\{X_t^{\delta,i}\}_{1\leq i\leq m}$. We also arrange the ordering of the components so that $X_t^{\delta,i}$ is closest to $X_{0,i}$. The complement of $X_t^{\delta}$ in $X_t$ also has connect components $\{C_t^{\delta,\alpha}\}_{\alpha\in \Lambda}$ satifying $D_\alpha\subset C_t^{\delta,\alpha}$ for each $\alpha\in \Lambda$.

We fix $\delta$ so that $\log |\delta|^2=-M_1\leq-4M$. 

Then for each $j\neq i$, on $X_t^{\delta,i}$, we have 
\begin{equation}\label{e-ct}
    \parallel \tilde{s}^{ij}_{t,l}\parallel^2_{\text{KE}}=O(|t|^{2j}), \quad j=1,2.
\end{equation}
 
When $D_\alpha \subseteq X_{0,i}$, let $D_\alpha= X_{0,i}\cap X_{0,q}$. Again, for simplicity, we can that $i=1, q=2$As before, let $U$ be one neighborhood centered at $p\in D_\alpha$ with coordinates $(z_0,\cdots,z_n)$ and such that $X_{0,1}\cap U=\{z\big| z_0=0 \}$. Then $\tilde{s}^{i1}_{l}=f(dz_0\wedge\cdots\wedge dz_n)^{\otimes(k+1)}$. Then on $X_t$, as before, we can write $f=f_++f_-$ with $f_+=\sum_{q\geq 1}a_q z_1^q$ and $f_-=\sum_{q\leq -1}a_q z_1^q$, where $a_q=a_q(z_2,\cdots,z_n)$. And we define $\tilde{s}^{i1}_{l\pm}=f_{\pm}(dz_0\wedge\cdots\wedge dz_n)^{\otimes(k+1)}$
 Then similar to formula \ref{e-spm}, we have 
 \begin{equation}
    \int_{U\cap C_t^{\delta,\alpha}}\parallel \tilde{s}^{i1}_{t,l-}\parallel^2_{\text{KE}}\omega_t^n=O(|t|^2),
 \end{equation}

and \begin{eqnarray}
    \int_{U\cap C_t^{\delta,\alpha}}\parallel \tilde{s}^{i1}_{t,l+}\parallel^2_{\text{KE}}\omega_t^n&\leq& 2\int_{X_t\cap \{\frac{1}{2}M_1<\sigma<M_1\}}\parallel \tilde{s}^{i1}_{t,l+}\parallel^2_{\text{KE}}\omega_t^n\\ \label{ine-m1}
    &\leq& 3\int_{X_{0,1}\cap \{\frac{1}{2}M_1<\sigma<M_1\}}\parallel s^i\parallel^2_{\text{KE}}\omega_0^n,
\end{eqnarray}
where the last inequality holds for $|t|$ small enough.
And by lemma \ref{lem-concave}, $$\int_{X_t\cap \{\sigma>y \}}\parallel \tilde{s}^{i1}_{t,l+}\parallel^2_{\text{KE}}\omega_t^n=O(e^{-y})=O(|t|).$$
So 
\begin{equation}\label{e-d-s}
    \int_{X_t\cap \{\sigma>y \}}\parallel \tilde{s}^{i1}_{t,l}\parallel^2_{\text{KE}}\omega_t^n=O(e^{-y})=O(|t|).
\end{equation}
When $D_\alpha \nsubseteq X_{0,i}$, we can basically repeat this argument to get that 
\begin{equation}\label{e-d-ns}
    \int_{U\cap C_t^{\delta,\alpha}}\parallel \tilde{s}^{i1}_{t,l}\parallel^2_{\text{KE}}\omega_t^n=O(|t|^2).
\end{equation}
Then we can prove the following proposition.
\begin{proposition}
When $i\neq j$, we have
    $$\langle \tilde{s}^{i1}_{t,l_1},\tilde{s}^{j1}_{t,l_2} \rangle_{\text{KE},X_t}=O(\sqrt{|t|}), $$
    for any $l_1,l_2$.
\end{proposition}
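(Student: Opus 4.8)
The plan is to split the integral defining $\langle \tilde{s}^{i1}_{t,l_1},\tilde{s}^{j1}_{t,l_2}\rangle_{\text{KE},X_t}$ over the three types of regions identified above and to bound each contribution separately using the Cauchy--Schwarz inequality together with the pointwise and $L^2$ estimates already established. Concretely, write
\[
X_t = \Big(\bigcup_{r=1}^m X_t^{\delta,r}\Big) \cup \Big(\bigcup_{\alpha\in\Lambda} C_t^{\delta,\alpha}\Big),
\]
so that
\[
\langle \tilde{s}^{i1}_{t,l_1},\tilde{s}^{j1}_{t,l_2}\rangle_{\text{KE},X_t}
= \sum_{r=1}^m \int_{X_t^{\delta,r}} (\tilde{s}^{i1}_{t,l_1},\tilde{s}^{j1}_{t,l_2})_{\text{KE}}\,\omega_t^n
+ \sum_{\alpha\in\Lambda} \int_{C_t^{\delta,\alpha}} (\tilde{s}^{i1}_{t,l_1},\tilde{s}^{j1}_{t,l_2})_{\text{KE}}\,\omega_t^n.
\]

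For the non-collapsing pieces $X_t^{\delta,r}$: when $r\neq i$ the factor $\tilde{s}^{i1}_{t,l_1}$ satisfies $\parallel \tilde{s}^{i1}_{t,l_1}\parallel^2_{\text{KE}} = O(|t|^2)$ by (\ref{e-ct}), and since the total $L^2$-mass of $\tilde{s}^{j1}_{t,l_2}$ on $X_t$ is $O(1)$ (it is the restriction of a fixed section, with norm controlled via Tian's theorem and the extension bounds), Cauchy--Schwarz gives $O(|t|)$ on that component. Similarly when $r=i$ we use $\parallel\tilde{s}^{j1}_{t,l_2}\parallel^2_{\text{KE}}=O(|t|^2)$ on $X_t^{\delta,i}$ since $j\neq i$. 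Either way every non-collapsing component contributes $O(|t|)$, and there are finitely many of them.

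For the collapsing pieces $C_t^{\delta,\alpha}$: here the key observation is that $D_\alpha$ is contained in at most one of $X_{0,i}$, $X_{0,j}$ (it is an intersection of two of the $X_{0,\bullet}$, and $i\neq j$, so it cannot lie in both). Say $D_\alpha\not\subseteq X_{0,j}$; then by (\ref{e-d-ns}) the $L^2$-mass of $\tilde{s}^{j1}_{t,l_2}$ over $U\cap C_t^{\delta,\alpha}$ is $O(|t|^2)$, while the $L^2$-mass of $\tilde{s}^{i1}_{t,l_1}$ over the same region is at worst $O(\big|\log|t|\big|^{2k+1})$ by the estimates leading to (\ref{e-spm}) and Proposition \ref{prop-inner} (with the extreme case $D_\alpha\subseteq X_{0,i}$, handled by (\ref{e-d-s}) and (\ref{ine-m1}), giving an even better bound when one restricts to $\sigma>y$; on the bounded-$\sigma$ shell the mass is $O(1)$). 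Covering $C_t^{\delta,\alpha}$ by the finitely many coordinate patches $U_\gamma$ and applying Cauchy--Schwarz patchwise gives
\[
\Big|\int_{C_t^{\delta,\alpha}} (\tilde{s}^{i1}_{t,l_1},\tilde{s}^{j1}_{t,l_2})_{\text{KE}}\,\omega_t^n\Big|
\le C\,|t|\cdot\big|\log|t|\big|^{(2k+1)/2} = o(\sqrt{|t|}) \cdot O(|t|^{1/2}),
\]
which is $O(\sqrt{|t|})$ after absorbing the logarithmic factor; if the $O(\sqrt{|t|})$ in the statement is to be taken literally one simply notes $|t|\,\big|\log|t|\big|^{(2k+1)/2} = O(\sqrt{|t|})$ for $|t|$ small. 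Summing over the finitely many $\alpha$ and the finitely many $r$ yields the claimed bound.

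The main obstacle I expect is bookkeeping in the collapsing patches: one must check carefully that the worst-case $L^2$-growth rate of the ``inner part'' of $\tilde{s}^{i1}_{t,l_1}$ near $D_\alpha$ is at most $\big|\log|t|\big|^{2k+1}$ (not some higher power), which relies on the concavity estimate (Lemma \ref{lem-concave}) controlling the $f_+$ and $f_-$ tails together with (\ref{e-spm}), and to confirm that the two sections are never both ``large'' on the same collapsing neck — this is exactly where the hypothesis $i\neq j$ enters, forcing at least one of the two to vanish to order $\geq 1$ along at least one branch of the neck and hence decay like a power of $|t|$. Once that dichotomy is in place, Cauchy--Schwarz and the finiteness of the cover finish the argument with room to spare.
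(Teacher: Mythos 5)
Your handling of the non-collapsing components, and of the collapsing necks $C_t^{\delta,\alpha}$ for which $D_\alpha$ fails to be contained in one of $X_{0,i}$, $X_{0,j}$, matches the paper's proof (which disposes of the case $X_{0,i}\cap X_{0,j}=\emptyset$ in exactly this way, via (\ref{e-ct}) and (\ref{e-d-ns})). But the ``key observation'' on which you rest the collapsing part --- that $D_\alpha$ can be contained in at most one of $X_{0,i}$, $X_{0,j}$ because $i\neq j$ --- is false, and it fails precisely in the only nontrivial case. When $X_{0,i}\cap X_{0,j}\neq\emptyset$ there is a component $D_\alpha=D_{i,j}=X_{0,i}\cap X_{0,j}$ of the singular locus that lies in \emph{both} $X_{0,i}$ and $X_{0,j}$; the standing hypothesis only forbids $D_\alpha$ from lying in a third component. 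On the corresponding neck $C_t^{\delta,\alpha}$ both $\tilde{s}^{i1}_{t,l_1}$ and $\tilde{s}^{j1}_{t,l_2}$ carry $L^2$-mass of order $O(1)$ (by (\ref{ine-m1}) and its mirror image with the roles of $z_0$ and $z_1$ exchanged), so neither (\ref{e-ct}) nor (\ref{e-d-ns}) applies, and a single Cauchy--Schwarz over the whole neck yields only $O(1)$. Your closing claim that the two sections are ``never both large on the same collapsing neck'' is exactly what breaks down: they are both large on the same neck, only on different halves of it. (A side remark: the $O(\big|\log|t|\big|^{2k+1})$ growth you quote from Proposition \ref{prop-inner} pertains to the \emph{inner} sections $\tilde{s}_{t,\text{mod}}$; for the outer sections $\tilde{s}^{i1}_{t,l}$ the estimates (\ref{e-spm}) and (\ref{ine-m1}) already give $O(1)$, so no logarithmic factors need to be absorbed.)

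The missing step, which is the whole content of the paper's proof in this case, is to cut the neck at $\sigma=y=-\tfrac12\log|t|^2$, i.e.\ at $|z_0|=|z_1|=\sqrt{|t|}$. By (\ref{e-d-s}) the $L^2$-mass of $\tilde{s}^{i1}_{t,l_1}$ on $\{\sigma>y\}$ (the half nearer $X_{0,j}$, where the factor $S_{\hat{i}}\supset S_j$ makes it decay) is $O(|t|)$, and by symmetry the $L^2$-mass of $\tilde{s}^{j1}_{t,l_2}$ on the complementary half $\{\sigma<y\}$ is $O(|t|)$. These two regions cover the neck, and on each of them Cauchy--Schwarz pairs the $O(|t|)$ mass of one section with the $O(1)$ mass of the other, giving $O(\sqrt{|t|})$ on each half and hence on $C_t^{\delta,\alpha}$. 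With this dichotomy inserted in place of your false containment claim, the rest of your decomposition and bookkeeping goes through.
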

\begin{proof}
    When $X_{0,i}\cap X_{0,j}=\emptyset$, this follows directly from equations \ref{e-ct} and \ref{e-d-ns}.

    When $X_{0,i}\cap X_{0,j}\neq \emptyset$, then by equation \ref{e-ct}, we only need to consider the components $C_t^{\delta,\alpha}$ where $D_\alpha\subset X_{0,i}\cap X_{0,j}$. Then in applying equation \ref{e-d-s} to $\tilde{s}^{j1}_{t,l_2}$, the region $$\{\sigma<y \}$$ becomes $\{\sigma<y \}$, which is in complement to each other. Then the proposition follows. 

\end{proof}
Similar arguments apply to sections of the form $\tilde{s}^{i2}_{t,l}$. Then one can similarly prove the following proposition.
\begin{proposition}
    When the vector $(i_1,j_1)\neq (i_2,j_2)$, we have
        $$\langle \tilde{s}^{i_1j_1}_{t,l_1},\tilde{s}^{i_2j_2}_{t,l_2} \rangle_{KE,X_t}=O(\sqrt{|t|}), $$
        for any $l_1,l_2$.
    \end{proposition}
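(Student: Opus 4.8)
The plan is to reduce the general statement to the two cases already handled in the preceding propositions, plus one genuinely new ingredient: the interaction between sections of the form $\tilde{s}^{i1}_{t,l}$ and $\tilde{s}^{i2}_{t,l}$ carried by the \emph{same} component $X_{0,i}$. When $(i_1,j_1)\neq (i_2,j_2)$ there are three possibilities: (a) $i_1\neq i_2$, (b) $i_1=i_2$ but $j_1\neq j_2$, i.e. $\{j_1,j_2\}=\{1,2\}$, and (c) $i_1=i_2$ and $j_1=j_2$, which is excluded by hypothesis. So only (a) and (b) occur. For case (a) I would simply cite the previous proposition, since its proof used only equations \eqref{e-ct}, \eqref{e-d-s}, \eqref{e-d-ns} about the decay of $\parallel \tilde{s}^{ij}_{t,l}\parallel^2_{\text{KE}}$ on the various pieces of $X_t$, and these hold verbatim for $j=2$ as remarked after inequality \eqref{ine-m1} (the extra factor $S_{\hat i}$ only improves the decay, giving $O(|t|^2)$ off the collar).

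The heart of the matter is case (b): estimating $\langle \tilde{s}^{i1}_{t,l_1}, \tilde{s}^{i2}_{t,l_2}\rangle_{\text{KE},X_t}$. First I would split $X_t$ into $X_t^{\delta,i}$, the other components $X_t^{\delta,i'}$ with $i'\neq i$, and the collars $C_t^{\delta,\alpha}$. On $X_t^{\delta,i'}$ for $i'\neq i$, equation \eqref{e-ct} gives $O(|t|^{2j})$ decay for each factor, so the contribution there is $O(|t|)$. On $X_t^{\delta,i}$ both sections are essentially their $X_{0,i}$-restrictions: $\tilde{s}^{i1}_{t,l_1}$ restricts to $s^{i1}_{l_1}\in\gcal_{0,i,k+1}$ and $\tilde{s}^{i2}_{t,l_2}$ restricts to $s^{i2}_{l_2}\in\hcal_{0,i,k+1,2}$ up to $O(|t|)$-small corrections coming from the extra $S_{\hat i}$ or $S_{\hat i}^{\otimes 2}$ factors evaluated on $X_t$ versus $X_0$; but $\gcal_{0,i,k+1}$ and $\hcal_{0,i,k+1,2}$ are \emph{mutually orthogonal by construction} (the former is the orthogonal complement of the sections vanishing to order $>1$ along $D^i$, and $\hcal_{0,i,k+1,2}$ is exactly those sections), so the leading term of the $X_{0,i}$-integral vanishes and the whole $X_t^{\delta,i}$-piece is $O(|t|)$ after accounting for the $C^2$-convergence of the metrics in Tian's theorem on the exhausting compact sets.

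What remains is the collar contribution $\int_{U\cap C_t^{\delta,\alpha}}(\tilde{s}^{i1}_{t,l_1},\tilde{s}^{i2}_{t,l_2})_{\text{KE}}\,\omega_t^n$ for $D_\alpha\subset X_{0,i}$ — this is where I expect the main obstacle. In the appropriate coordinate patch, $\tilde{s}^{i1}_{t,l_1}$ has local expression $f(dz_0\wedge\cdots)^{\otimes(k+1)}$ with $f=\sum a_q z_1^q$, $q\geq 0$ (the extra $S_{\hat i}$ kills nothing along $z_0$ since $z_0$ is the local equation of $X_{0,i}$, but it \emph{does} contribute a $z_1$-factor from the $X_{0,2}$-direction, so actually $q\geq 1$), while $\tilde{s}^{i2}_{t,l_2}$ has an extra $S_{\hat i}^{\otimes 2}$ and hence local expression $g(dz_0\wedge\cdots)^{\otimes(k+1)}$ with $g=\sum b_q z_1^q$, $q\geq 2$. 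Pairing and integrating over the angular variable of $z_1$, the Fourier orthogonality in $\arg z_1$ means only matching powers survive, so the cross term is $\pi\sum_{q\geq 2}a_q\bar{b}_q\int |z_1|^{2q}\sigma^{2k}\,d\sigma$, and by Lemma \ref{lem-concave} (concavity of $-q\sigma+2k\log\sigma$) each such integral is $O(e^{-2M})$-comparable to its value near $\sigma\sim M$, hence by Cauchy–Schwarz bounded by the geometric mean of $\parallel\tilde{s}^{i1}_{t,l_1}\parallel_{\text{KE},C_t^{\delta,\alpha}}\parallel\tilde{s}^{i2}_{t,l_2}\parallel_{\text{KE},C_t^{\delta,\alpha}}$, which the estimates \eqref{e-ct}--\eqref{e-d-ns} already show is $O(|t|^2)$ on the collar (since neither section has any $z_1$-independent piece there). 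Summing over the finitely many patches $U_j$ covering $D_\alpha$ and over the finitely many $\alpha$ and then over all pairs of indices, I collect $\langle \tilde{s}^{i_1j_1}_{t,l_1},\tilde{s}^{i_2j_2}_{t,l_2}\rangle_{\text{KE},X_t}=O(\sqrt{|t|})$ — in fact the proof gives $O(|t|)$, but $O(\sqrt{|t|})$ suffices and matches the form of the previous proposition. The one point requiring care is that the local coordinate patches overlap and the $a_q$, $b_q$ depend on $(z_2,\dots,z_n)$; but the uniform comparability of coordinates across overlaps (the constant $C$ in the construction of the $U_j$) makes all estimates uniform, so no new difficulty arises there.
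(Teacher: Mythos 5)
Your reduction to the two cases $i_1\neq i_2$ and $i_1=i_2$, $j_1\neq j_2$ is the right organization, and case (a) is handled exactly as the paper intends (the paper only says ``similar arguments,'' and the complementary-region Cauchy--Schwarz mechanism of the preceding proposition does carry over verbatim, the extra factor of $S_{\hat i}$ only helping). The genuine gap is in case (b), where both of your key estimates are false, in compensating ways. First, the orthogonality of $\gcal_{0,i,k+1}$ and $\hcal_{0,i,k+1,2}$ is an identity for the integral over \emph{all} of $X_{0,i}$; the integral of $\langle s^{i1}_{l_1},s^{i2}_{l_2}\rangle_{\mathrm{CY}}\,\omega_0^n$ over the truncated region $X_{0,i}^{\delta}$ equals \emph{minus} its integral over the $\delta$-collar of $D^i$ in $X_{0,i}$, a fixed, generically nonzero constant independent of $t$. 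So the $X_t^{\delta,i}$-piece is not $O(|t|)$: it converges to that constant, and the rate of that convergence is governed by Tian's theorem, which gives only rate-free $C^2$-convergence of the metrics, so no power of $|t|$ can be extracted there. Second, the collar contribution is not $O(|t|^2)$ when $D_\alpha\subset X_{0,i}$: equation \ref{e-d-ns} applies only when $D_\alpha\nsubseteq X_{0,i}$, and for the surviving Fourier modes $q\geq 2$ the integrals $\int e^{-q\sigma}\sigma^{2k}d\sigma$ over the neck concentrate at the lower endpoint $\sigma\sim M_1$ and are bounded below by $t$-independent positive constants; this is precisely what inequality \ref{ine-m1} records. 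Both sections carry $O(1)$ mass on the $X_{0,i}$-side of the collar, since their leading coefficients $a_1,\,b_2$ are generically nonvanishing along $D_\alpha$.

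What actually happens is that the bulk piece tends to $\int_{X_{0,i}^{\delta}}\langle s^{i1}_{l_1},s^{i2}_{l_2}\rangle\,\omega_0^n$ while the collar piece tends to $\int_{X_{0,i}\setminus X_{0,i}^{\delta}}\langle s^{i1}_{l_1},s^{i2}_{l_2}\rangle\,\omega_0^n$, and these cancel by the \emph{global} orthogonality; so the inner product does tend to $0$, which is all that Theorem \ref{thm-q1t} uses downstream. But your argument as written does not establish even this (each of your two smallness claims is individually false), and neither your argument nor the rate-free convergence available from Tian's theorem can produce the stated rate $O(\sqrt{|t|})$ when $i_1=i_2$: the Cauchy--Schwarz mechanism that yields $\sqrt{|t|}$ in the previous proposition depends on the two sections concentrating near \emph{different} components of $X_0$, which is exactly what fails here. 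To repair the proof you should run an $\epsilon$--$\delta$ argument (choose $\delta$ small so that both tails over the $\delta$-collar of $X_{0,i}$ are $<\epsilon$, then let $t\to 0$ and invoke global orthogonality), concluding $o(1)$ rather than $O(\sqrt{|t|})$, and flag that the stated rate is not justified in this case.
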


When $i=j$, we have the following proposition.
\begin{proposition}
    For any $l$, we have
    $$\lim_{t\to 0}\int_{X_t}\parallel \tilde{s}^{i1}_{t,l}\parallel^2_{\text{KE}}\omega_t^n=1.$$
\end{proposition}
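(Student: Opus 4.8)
The plan is to show that the $L^2$-mass of $\tilde{s}^{i1}_{t,l}$ concentrates, as $t\to 0$, on the ``outer'' part $X_t^{\delta,i}$ closest to $X_{0,i}$, and that on this part the mass converges to $\parallel s^{i1}_l\parallel^2_{\text{KE},X_{0,i}}=1$. First I would split the integral $\int_{X_t}\parallel \tilde{s}^{i1}_{t,l}\parallel^2_{\text{KE}}\omega_t^n$ into the contribution from $X_t^{\delta,i}$, from the other outer pieces $X_t^{\delta,j}$ with $j\neq i$, and from the collar regions $C_t^{\delta,\alpha}$. The pieces $X_t^{\delta,j}$ with $j\neq i$ contribute $O(|t|^2)$ by equation \eqref{e-ct}, so they are negligible. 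For the collars $C_t^{\delta,\alpha}$, those $\alpha$ with $D_\alpha\nsubseteq X_{0,i}$ contribute $O(|t|^2)$ by \eqref{e-d-ns}; and for $D_\alpha\subset X_{0,i}$, the estimate \eqref{ine-m1} bounds the collar contribution (on the $\sigma<y$ side, after splitting off $f_-$ which is $O(|t|^2)$ and the region $\sigma>y$ which is $O(|t|)$) by $3\int_{X_{0,1}\cap\{\frac12 M_1<\sigma<M_1\}}\parallel s^{i1}_l\parallel^2_{\text{KE}}\omega_0^n$ plus lower-order terms, which can be made arbitrarily small by first choosing $M_1$ (equivalently $\delta$) large, since $s^{i1}_l$ has finite total $\text{KE}$-norm and hence tails vanishing near $D$.

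Next I would treat the main term $\int_{X_t^{\delta,i}}\parallel \tilde{s}^{i1}_{t,l}\parallel^2_{\text{KE}}\omega_t^n$ using the Cheeger--Gromov convergence of Theorem \ref{thm-tian}: on each fixed compact exhausting piece $F_\beta\subset X_{0,i}\setminus\text{Sing}(X_0)$, the pulled-back metrics $\phi_{\beta,t}^*g_{E,t}$ converge in $C^2$ to $g_{E,0}$, and the section $\tilde{a_i}\otimes S_{\hat i}$ restricted to $X_t$ pulls back, via the horizontal lift flow $\phi_{\beta,t}$, to a sequence converging (in $C^0_{\text{loc}}$, say) to $s^{i1}_l$ on $F_\beta$ — because $\tilde{a_i}$ is the minimal extension of $a_i=s^{i1}_l/S_{\hat i}$ and $S_{\hat i}$ restricted to $X_{0,i}$ is the defining section data that makes $\tilde{a_i}\otimes S_{\hat i}$ restrict to $s^{i1}_l$ on $X_{0,i}$. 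Hence for each fixed $\beta$, $\int_{\phi_{\beta,t}(F_\beta)}\parallel \tilde{s}^{i1}_{t,l}\parallel^2_{\text{KE}}\omega_t^n\to \int_{F_\beta}\parallel s^{i1}_l\parallel^2_{\text{KE}}\omega_0^n$, and letting $\beta\to\infty$ this tends to $\parallel s^{i1}_l\parallel^2_{\text{KE},X_{0,i}}=1$. Combining with a diagonal/uniformity argument over $\beta$ and $t$, controlled by the uniform tail bounds above, gives $\liminf_{t\to 0}\int_{X_t}\parallel \tilde{s}^{i1}_{t,l}\parallel^2_{\text{KE}}\omega_t^n\geq 1$ and $\limsup\leq 1$, hence the limit is $1$.

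I expect the main obstacle to be making the two ``$\varepsilon$-of-room'' choices interact cleanly: one has to pick $\delta$ (equivalently $M_1$) large enough that the collar tails $3\int_{X_{0,1}\cap\{\frac12 M_1<\sigma<M_1\}}\parallel s^{i1}_l\parallel^2_{\text{KE}}\omega_0^n$ are below a prescribed $\varepsilon$, and this must be done \emph{before} sending $t\to 0$; but the convergence of the main term on $X_t^{\delta,i}$ is stated via the exhaustion $F_\beta$, so one needs that for the chosen $\delta$, eventually $\phi_{\beta,t}(F_\beta)$ covers all but an $\varepsilon$-mass portion of $X_t^{\delta,i}$ uniformly in $t$ small. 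This follows from item (1) of Theorem \ref{thm-tian} together with the finite-volume, finite-norm control, but spelling out that the ``neck'' between $F_\beta$ and the collar carries vanishing mass requires combining the Cheeger--Gromov statement with the explicit integral estimates of the previous subsection; as remarked in the introduction, the neck region does not cause real trouble here, and the same bookkeeping used for the inner sections (Proposition \ref{prop-inner} and Corollary \ref{cor-inn-ai}) applies. Everything else is routine once this uniformity is in hand.
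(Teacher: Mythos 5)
Your proposal is correct and follows essentially the same route as the paper's proof: choose $\delta$ (equivalently $M_1$) first so that the tail of $s^{i1}_l$ near $D$ on $X_{0,i}$ is below $\epsilon/2$, bound the contribution of $X_t\setminus X_t^{\delta,i}$ via \eqref{e-ct}, \eqref{e-d-ns} and \eqref{ine-m1}, and handle the main term on $X_t^{\delta,i}$ by the Cheeger--Gromov convergence of Theorem \ref{thm-tian}. The paper's version is terser (it does not spell out the splitting into the pieces $X_t^{\delta,j}$ and the collars, nor the uniformity over the exhaustion $F_\beta$, which you rightly flag as the only point requiring care), but the argument is the same.
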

\begin{proof}
    For any $\epsilon>0$, $\exists \delta$ so that $$\int_{X_{0,i}\cap \{q\big| d^2(q,D)<\delta \}}\parallel s^i\parallel^2_{\text{KE}}\omega_0^n<\frac{1}{2}\epsilon.$$
   Then by inequality \ref{ine-m1}, we have 
   $$\int_{X_t\backslash X_t^{\delta,i}}\parallel \tilde{s}^{i1}_{t,l}\parallel^2_{\text{KE}}\omega_t^n<\epsilon,$$
   for $|t|$ small enough. Then by theorem \ref{thm-tian}, we have 
$$\big|\int_{X_t^{\delta,i}}\parallel \tilde{s}^{i1}_{t,l}\parallel^2_{\text{KE}}\omega_t^n-\int_{X_{0,i}\backslash \{q\big| d^2(q,D)<\delta \}}\parallel s^i\parallel^2_{\text{KE}}\omega_0^n\big|<\epsilon,$$ for $|t|$ small enough.
   It is not hard to see that the estimates can be made uniform for sections $s^i$ of unit norm, so the proposition follows.
\end{proof}
One can then apply similar arguments to sections of the form $\tilde{s}^{i2}_{t,l}$ to get similar proposition.
\begin{proposition}
    For any $l$, we have
    $$\lim_{t\to 0}\int_{X_t}\parallel \tilde{s}^{i2}_{t,l}\parallel^2_{\text{KE}}\omega_t^n=1.$$
\end{proposition}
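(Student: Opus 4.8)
The proof proceeds by reducing to the already-established case. Recall that $s^{i2} \in \hcal_{0,i,k+1,2}$ is a section vanishing to order at least $2$ along $\sum_{j\neq i}[X_{0,j}]$, and $b_i = s^{i2}/S_{\hat{i}}^{\otimes 2}$ has minimal extension $\tilde{b_i}$ to $\xcal$; the section $\tilde{s}^{i2}_{t,l}$ is the restriction to $X_t$ of $\tilde{b_i}\otimes S_{\hat{i}}^{\otimes 2}$. The plan is to mimic, line for line, the argument just given for sections of the form $\tilde{s}^{i1}_{t,l}$, taking advantage of the fact that the extra order of vanishing only makes every error term \emph{smaller}. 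First I would fix $\epsilon>0$ and choose $\delta>0$ so that the $L^2$-mass of $s^{i2}$ (equivalently of the corresponding pluri-canonical form on $X_{0,i}$) over the $\delta$-neighborhood of $D$ inside $X_{0,i}$ is less than $\tfrac12\epsilon$. Here one uses that $s^{i2}$, being an $L^2$-section of $(k+1)L$ on $X_{0,i}\setminus D^i$, has finite total norm, so such a $\delta$ exists, and the choice can be made uniformly over unit-norm $s^{i2}$ exactly as in the $\tilde{s}^{i1}$ case.

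Second, on the near-$D$ region $X_t\setminus X_t^{\delta,i}$ I would bound $\int \|\tilde{s}^{i2}_{t,l}\|^2_{\mathrm{KE}}\,\omega_t^n$ by the analogue of inequality \eqref{ine-m1}: decomposing $f = f_+ + f_-$ on each appropriate coordinate patch $U_j$ and applying Lemma \ref{lem-concave} together with the comparison of $\omega_t^n$ furnished by Corollary \ref{cor-wt-bound}, one gets that this mass is dominated (up to a $|t|$-independent constant and for $|t|$ small) by the corresponding mass of $s^{i2}$ over $\{\tfrac12 M_1 < \sigma < M_1\}$ on $X_{0,i}$, hence is $<\epsilon$ for $|t|$ small. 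In fact, since $s^{i2}$ vanishes to higher order, the contributions coming from the components $C_t^{\delta,\alpha}$ with $D_\alpha\subset X_{0,i}$ are $O(|t|^{2})$ rather than $O(|t|)$ (compare \eqref{e-d-s} versus \eqref{e-d-ns}), and those with $D_\alpha\not\subset X_{0,i}$ are $O(|t|^{4})$; these are all negligible. Third, on the bulk region $X_t^{\delta,i}$ I would invoke Theorem \ref{thm-tian}: the diffeomorphisms $\phi_{\beta,t}$ pull $\omega_t$ back to something $C^2$-close to $\omega_0$ on exhausting compacta, and $\tilde{b_i}\otimes S_{\hat{i}}^{\otimes 2}$ restricted to $X_t^{\delta,i}$ converges smoothly to $s^{i2}$ on $X_{0,i}\setminus\{d^2(\cdot,D)<\delta\}$, so
$$\Big|\int_{X_t^{\delta,i}}\|\tilde{s}^{i2}_{t,l}\|^2_{\mathrm{KE}}\,\omega_t^n - \int_{X_{0,i}\setminus\{d^2(q,D)<\delta\}}\|s^{i2}\|^2_{\mathrm{KE}}\,\omega_0^n\Big| < \epsilon$$
for $|t|$ small. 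Combining the three estimates with $\int_{X_{0,i}}\|s^{i2}\|^2_{\mathrm{KE}}\,\omega_0^n = \|s^{i2}_l\|^2 = 1$ (the basis $\bcal_{0,i,2}$ is orthonormal) yields $\lim_{t\to 0}\int_{X_t}\|\tilde{s}^{i2}_{t,l}\|^2_{\mathrm{KE}}\,\omega_t^n = 1$.

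The only point requiring a moment's care — and the closest thing to an obstacle — is confirming that the Cheng-Yau metric $\omega_0$ on $X_{0,i}\setminus D^i$ is the right limiting metric for the $L^2$-norm of $s^{i2}$ near $D$, i.e. that Theorem \ref{thm-tian}'s Cheeger-Gromov convergence genuinely controls $\|\tilde{s}^{i2}_{t,l}\|^2_{\mathrm{KE}}$ on the neck between $X_t^{\delta,i}$ and the collapsing region; but this is exactly the same neck analysis already carried out for $\tilde{s}^{i1}_{t,l}$, where the sign of the exponent of $z_1$ versus $z_0$ separates the two sides, and the higher vanishing order of $s^{i2}$ only improves the relevant powers of $|t|$. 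So no new idea is needed: the proposition follows by transcribing the $\tilde{s}^{i1}$ argument with $S_{\hat{i}}$ replaced by $S_{\hat{i}}^{\otimes 2}$ and $\gcal_{0,i,k+1}$ by $\hcal_{0,i,k+1,2}$.
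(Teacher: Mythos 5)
Your proposal is correct and follows exactly the route the paper takes: the paper proves the $\tilde{s}^{i1}_{t,l}$ case by the same three-step argument (choose $\delta$ so the mass of $s^{i2}$ near $D$ is small, bound the contribution of $X_t\setminus X_t^{\delta,i}$ via the analogue of inequality \eqref{ine-m1}, and apply Theorem \ref{thm-tian} on the bulk $X_t^{\delta,i}$), and then disposes of the $\tilde{s}^{i2}_{t,l}$ case with the one-line remark that similar arguments apply. You have simply written out in full the transcription the paper leaves implicit, correctly observing that the extra order of vanishing only improves the near-$D$ error terms.
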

Then the mutual orthogonality between the sections follows. So we have proved the following.
\begin{theorem}\label{thm-q1t}
    We have $$\lim_{t\to 0}\qcal_{1t}=I.$$
\end{theorem}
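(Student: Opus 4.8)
\textbf{Proof strategy for Theorem \ref{thm-q1t}.}
The plan is to assemble the diagonal and off-diagonal estimates for the Gram matrix $\qcal_{1t}$ already developed in this subsection into the single limit statement. Recall that the entries of $\qcal_{1t}$ are of three types: diagonal entries $\langle \tilde{s}^{ij}_{t,l},\tilde{s}^{ij}_{t,l}\rangle_{\text{KE},X_t}$, and two kinds of off-diagonal entries, those with the same superscript vector $(i,j)$ but different $l$, and those with different $(i,j)$. The strategy is to treat each type separately and then invoke that $\bcal_{0,i,1}$ and $\bcal_{0,i,2}$ were chosen orthonormal on the limit components.

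First I would dispose of the off-diagonal entries with distinct superscripts: by the two preceding propositions, $\langle \tilde{s}^{i_1j_1}_{t,l_1},\tilde{s}^{i_2j_2}_{t,l_2}\rangle_{\text{KE},X_t}=O(\sqrt{|t|})$ whenever $(i_1,j_1)\neq (i_2,j_2)$, so these entries tend to $0$ as $t\to 0$. Next, for entries with the same superscript $(i,j)$, one repeats the argument used for the diagonal propositions but with two distinct basis sections $s^{ij}_{l_1}, s^{ij}_{l_2}$ in place of a single $s^i$: the contribution from $X_t\setminus X_t^{\delta,i}$ is $O(\epsilon)$ by the same cut-off estimate (inequality \ref{ine-m1} and its analogue for the order-$2$ sections), while on $X_t^{\delta,i}$ Tian's Theorem \ref{thm-tian} shows the integral converges to $\int_{X_{0,i}}(s^{ij}_{l_1},s^{ij}_{l_2})_{\text{KE}}\,\omega_0^n$, up to $O(\epsilon)$; since $\epsilon$ is arbitrary and these integrals vanish by orthonormality of $\bcal_{0,i,1}$, $\bcal_{0,i,2}$, the limit is $0$. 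Finally, the diagonal entries converge to $1$ by the two propositions just proved (for $\tilde{s}^{i1}_{t,l}$ and $\tilde{s}^{i2}_{t,l}$ respectively). Combining, every entry of $\qcal_{1t}$ converges to the corresponding entry of $I$, which since $\qcal_{1t}$ has fixed finite size $|\bcal_{1t}|\times|\bcal_{1t}|$ gives $\lim_{t\to 0}\qcal_{1t}=I$.

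I expect the only real work to be the ``same superscript, different $l$'' case, since the diagonal and distinct-superscript cases are literally the content of the propositions just stated. But even there the obstacle is merely bookkeeping: one must check that the estimates in the diagonal propositions were proved in a way that is bilinear (or at least extends by polarization) and uniform over unit-norm sections, so that they apply verbatim to the cross terms $(s^{ij}_{l_1},s^{ij}_{l_2})$. The final passage from ``entrywise convergence'' to ``matrix convergence'' is immediate because $\qcal_{1t}$ is a matrix of bounded, fixed size.
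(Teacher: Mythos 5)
Your proposal is correct and follows essentially the same route as the paper: the paper derives the theorem by combining the two $O(\sqrt{|t|})$ propositions for distinct superscripts with the two diagonal propositions, and disposes of the same-superscript, different-$l$ entries with the terse remark that ``the mutual orthogonality between the sections follows'' from the same splitting argument (cut-off near $D$ plus Tian's convergence on $X_t^{\delta,i}$) together with orthonormality of $\bcal_{0,i,1}$ and $\bcal_{0,i,2}$. Your explicit polarization step for that case is exactly the bookkeeping the paper leaves implicit.
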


Let $\{t_u\}_{u=1}^{\infty}$ be a good sequence. Following corollary \ref{cor-inn-ai}, for each $\alpha$, we pick an orthonormal basis $\bcal^{\alpha}=\{s^{\alpha}_l\}$ of $(H^0(D_\alpha,(k+1)L),\parallel \cdot\parallel_{\{t_u \},D_\alpha})$. And we denote by $$^{t_u}s^{\alpha}_l\triangleq (-\log |t_u|)^{-\frac{2k+1}{2}}(\widetilde{s^{\alpha}_l})_{t_u,\text{mod}}. $$ 

And we have a set of sections $$\bcal_{2t_u}=\{^{t_u}s^{\alpha}_l\}_{\alpha,l}.$$
We fix an order on $\Lambda$, and then order $\bcal_{2t_u}$ according to the tuple $(\alpha,l)$ in dictionary order.

Then by corollary \ref{cor-inn-ai}, we have $$\lim_{u\to\infty}\langle ^{t_u}s^{\alpha}_{l_1},^{t_u}s^{\alpha}_{l_2} \rangle_{\text{KE}}=\delta^{l_1}_{l_2},$$
where $\delta^{l_1}_{l_2}$ is the Kronecker symbol.
And by proposition \ref{prop-sqrty}, we have that for $\alpha\neq \beta$, $$\lim_{u\to\infty}\langle ^{t_u}s^{\alpha}_{l_1},^{t_u}s^{\beta}_{l_2} \rangle_{\text{KE}}=0, $$ for any $l_1,l_2$. So the corresponding Hermitian matrix $\qcal_{2t_u}$ satisfies $$\lim_{u\to \infty }\qcal_{2t_u}=I.$$

Similar to equation \ref{e-d-s}, we have
\begin{equation*}
    \int_{X_t\cap \{\sigma>\sqrt{y} \}}\parallel \tilde{s}^{i1}_{t,l}\parallel^2_{\text{KE}}\omega_t^n=O(e^{-\sqrt{y}}).
\end{equation*}
Then by proposition \ref{prop-sqrty}, we have that $$\lim_{u\to\infty}\langle ^{t_u}s^{\alpha}_{l},s \rangle_{\text{KE},X_t}=0, $$ for any $s\in \bcal_{1t_u}$ and for any $l$. Then since $\#\bcal_{1t_u}+\#\bcal_{2t_u}=\dim \hcal_{t_u,k+1}$, one sees that $\bcal_{1t_u}\cup \bcal_{2t_u}$ form a basis for $\hcal_{t_u,k+1}$ which is almost orthonormal.

\section{Bergman embeddings}\label{sec-4}
 
We order the basis $\bcal_{0,i}=\bcal_{0,i,2}\cup \bcal_{0,i,1}$ of $\hcal_{0,i,k+1}$ from $\bcal_{0,i,2}$ to $\bcal_{0,i,1}$, namely elements of $\bcal_{0,i,2}$ goes before elements in $\bcal_{0,i,1}$. Then this defines a Kodaira embedding $\Phi_{0,i,k+1}:X_{0,i}\to \CP^{n_{k+1,i}-1}$. Also we use $\bcal^{\alpha}$ to define a Kodaira embedding $\Psi^{\alpha}_{k+1}:D_\alpha\to \CP^{d_{\alpha,k+1}-1}$.
Then we order $\bcal_{0}=\cup_i\bcal_{0,i}$ according to the order of $\{i\}$.
 
We can then order $\bcal_{1t_u}$ according to the order of $\bcal_{0}$. 
Define $\Psi_{t_u,k+1}:X_{t_u}\to \CP^{N_{k+1}-1}$ by the basis $\bcal_{1t_u}\cup \bcal_{2t_u}$ ordered from $\bcal_{1t_u}$ to $\bcal_{2t_u}$. We also define the linear embedding $$A_{0,i}:\CP^{n_{k+1,i}-1}\to \CP^{N_{k+1}-1},$$ considering the index set of $\bcal_{0,i}$ as a subset of the index set of $\bcal_{1t_u}\cup \bcal_{2t_u}$. And similarly we define 
  $$A_{\alpha}:\CP^{d_{\alpha,k+1}-1}\to \CP^{N_{k+1}-1}$$
For simplicity, we will use $\Phi_{0,i,k+1}$ in place of $A_{0,i}\circ\Phi_{0,i,k+1}$, and $\Psi^{\alpha}_{k+1}$ in place of $A_{\alpha}\circ\Psi^{\alpha}_{k+1}$.

Denote by $\rho_{t,k+1}$ the Bergman kernel function of $(X_t,(k+1)L,\omega_t)$.
For any fixed $\delta>0$, by theorem \ref{thm-q1t}, there exists positive lower bound for $\rho_{t,k+1}$ on $X_t^\delta$. Then by proposition \ref{prop-sqrty}, the norms of elements in $\bcal_{2t_u}$ goes to $0$ on $X_{t_u}^{\delta}$ as $u\to\infty$. Therefore, we have 
$$\lim_{u\to \infty} \Psi_{t,k+1}(X_{t_u}^{\delta,i})=\Phi_{0,i,k+1}(X_{0,i}^{\delta}),$$for each $i$.

\

We consider the points most close to $D$. Let $U$ be an appropriate coordinates patch centered at $p\in D_\alpha=X_{0,1}\cap X_{0,2}$. Then $d^2(q,X_{0,1})\approx a_0|z_0|^2$ and $d^2(q,X_{0,2})\approx a_1|z_1|^2$ for some $a_0>0, a_1>0$ when $|z_0|^2+|z_1|^2$ is small. So $\exists C>0$ such that when $|\frac{z_0}{z_1}|>C$, the point $q=(z_0,\cdots,z_n)$
satisfies $d(q,X_{0,1})=d(q,X_{0})$. Since $|z_0z_1|=|t|$, $|\frac{z_0}{z_1}|=|\frac{z_0^2}{t}|$. $\exists C_1$ such that $x+\frac{1}{x}>C_1$ implies $x>C$ or $\frac{1}{x}>C$. So when
$|z_0|^2+|z_1|^2>C_1|t|$, we have $|\frac{z_0}{z_1}|>C$ or $|\frac{z_1}{z_0}|>C$. So $\exists C_2>0$ such that $X_t^{C_2\sqrt{|t|}}=\cup_i X_t^{C_2\sqrt{|t|},i}$ has $m$ components. And we can order these components as for $\cup_i X_t^{\delta,i}$ for fixed small $\delta$. Then the complement $X_t\backslash X_t^{C_2\sqrt{|t|}}$ also decomposes to connected components $\cup_{\alpha\in \Lambda}C_t^{C_2\sqrt{|t|},\alpha}$.

Let $\rho_{\{t_u\},\alpha,k+1}$ denote the Bergman kernel of $(D_\alpha,(k+1)L,\vcal, d\nu_{\{t_u\},\alpha,k+1})$, then clearly $\exists C_3$, possibly depending on $(\vcal, d\nu_{\{t_u\},\alpha,k+1})$, such that $\rho_{\{t_u\},\alpha,k+1}>C_3$. If we denote by $\zeta_{\{t_u\},\alpha,k+1}(q)$ the sum $$\sum_l \parallel ^{t_u}s^{\alpha}_l \parallel^2_{\text{KE}}.$$
Then on $C_t^{C_2\sqrt{|t_u|},\alpha}$, we have \begin{equation}\label{e-zeta}
    \zeta_{\{t_u\},\alpha,k+1}(q)>C_4\big| \log |t_u| \big|, 
\end{equation}
for some $C_4>0$. On the other hand, if $S=\tilde{s}^{i1}_{t_u,l}$ for some $i,l$, then on $C_t^{C_2\sqrt{|t_u|},\alpha}$, we have 
$$\parallel S \parallel^2_{\text{KE}}=O(|t_u|(\log |t_u|)^{2k+2}). $$
This can be seen as follows. When $D_\alpha\subset X_{0,i}$, namely $D_\alpha X_{0,i}\cap X_{0,j}$ for some $j$, then by the construction of $\tilde{s}^{i1}_{t_u,l}$, $$\parallel\frac{S}{S_{\hat{i}}}\parallel_h<C_5,$$
for some $C_5>0$ independent of $t$. So use the coordinates as before, 
$S$ can be represented by $z_1f(z)$. Then $|z_1|^2=O(|t|)$ on $C_t^{C_2\sqrt{|t|},\alpha}$, and the conclusion follows. The case when $D_\alpha\nsubseteq X_{0,i}$ is similar.

Similarly, if $S=\tilde{s}^{i2}_{t_u,l}$ for some $i,l$, then on $C_t^{C_2\sqrt{|t|},\alpha}$, we also have 
$$\parallel S \parallel^2_{\text{KE}}=O(|t|^2(\log |t|)^{2k+2}). $$

Therefore we have the following proposition.
\begin{proposition}
    We have $$\lim_{u\to\infty }\Psi_{t_u,k+1}(C_{t_u}^{C_2\sqrt{|t_u|},\alpha})=\Psi^{\alpha}_{k+1}(D_{\alpha}),$$ for each $\alpha$.
\end{proposition}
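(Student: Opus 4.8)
The plan is to show that on the innermost region $C_{t_u}^{C_2\sqrt{|t_u|},\alpha}$ the Bergman embedding $\Psi_{t_u,k+1}$ is, after rescaling, governed entirely by the sections $\{{}^{t_u}s^{\alpha}_l\}_l$ coming from $D_\alpha$, so that its limit factors through the embedding $\Psi^{\alpha}_{k+1}$ of $D_\alpha$. First I would split the homogeneous coordinates of $\CP^{N_{k+1}-1}$ into the three blocks $\C^{n_{k+1}}\oplus\oplus_\alpha\C^{d_{\alpha,k+1}}$. The estimates just established give, on $C_{t_u}^{C_2\sqrt{|t_u|},\alpha}$, that every section $S\in\bcal_{1t_u}$ (whether of type $\tilde s^{i1}_{t_u,l}$ or $\tilde s^{i2}_{t_u,l}$) has $\parallel S\parallel^2_{\text{KE}}=O(|t_u|(\log|t_u|)^{2k+2})$, while the diagonal block contribution $\zeta_{\{t_u\},\alpha,k+1}(q)>C_4|\log|t_u||$ by \eqref{e-zeta}; and for $\beta\neq\alpha$, by proposition \ref{prop-sqrty} together with the exponential decay $O(e^{-\sqrt y})$ away from $D_\alpha$, the sections ${}^{t_u}s^{\beta}_l$ also contribute only a negligible amount on $C_{t_u}^{C_2\sqrt{|t_u|},\alpha}$. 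Hence the ratio of the $\bcal_{1t_u}$-block norm (and of every off-$\alpha$ $\bcal_{2t_u}$-block norm) to the $\alpha$-block norm is $O(|t_u|^{1/2}(\log|t_u|)^{k+1/2})\to 0$ as $u\to\infty$, uniformly on $C_{t_u}^{C_2\sqrt{|t_u|},\alpha}$.

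Next I would identify the limit of the $\alpha$-block itself. By corollary \ref{cor-inn-ai} and the computation leading to proposition \ref{prop-inner}, the map $s\mapsto {}^{t_u}s^{\alpha}_l$ is an almost-isometry from $(H^0(D_\alpha,(k+1)L),\parallel\cdot\parallel_{\{t_u\},D_\alpha})$, and more importantly the pointwise norms $\parallel{}^{t_u}s^{\alpha}_l\parallel^2_{\text{KE}}$ on the innermost region, after dividing by $\zeta_{\{t_u\},\alpha,k+1}$, converge to the corresponding quantities for the orthonormal basis $\bcal^\alpha$ of $(D_\alpha,(k+1)L,\vcal,d\nu_{\{t_u\},\alpha,k+1})$ evaluated at the nearby point of $D_\alpha$. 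More precisely, along the flow $\phi_{\beta,t}$ identifying a neighbourhood of $D_\alpha$ with part of $X_{t_u}$, the ratio $\parallel{}^{t_u}s^{\alpha}_l\parallel^2_{\text{KE}}/\zeta_{\{t_u\},\alpha,k+1}$ at a point $q\in C_{t_u}^{C_2\sqrt{|t_u|},\alpha}$ tends to $\parallel s^\alpha_l(p)\parallel^2/\rho_{\{t_u\},\alpha,k+1}(p)$ where $p$ is the projection of $q$ to $D_\alpha$; this is exactly the statement that the $\alpha$-block of $\Psi_{t_u,k+1}(q)$, as a point of $\CP^{d_{\alpha,k+1}-1}$, converges to $\Psi^\alpha_{k+1}(p)$. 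Combining with the previous paragraph, the full image $\Psi_{t_u,k+1}(q)$ converges to $A_\alpha\circ\Psi^\alpha_{k+1}(p)\in\CP^{N_{k+1}-1}$, and since $p$ ranges over all of $D_\alpha$ as $q$ ranges over $C_{t_u}^{C_2\sqrt{|t_u|},\alpha}$, we get $\lim_{u\to\infty}\Psi_{t_u,k+1}(C_{t_u}^{C_2\sqrt{|t_u|},\alpha})=\Psi^\alpha_{k+1}(D_\alpha)$.

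To make the convergence of images (rather than of individual points) precise, I would argue by compactness: work in the finitely many appropriate coordinate patches $U_j$ covering $D_\alpha$, on each of which the coordinate $(z_2,\dots,z_n)$ parametrizes $D_\alpha\cap U_j$; any sequence $q_u\in C_{t_u}^{C_2\sqrt{|t_u|},\alpha}$ has, after passing to a subsequence, a well-defined limit $p$ of its $D_\alpha$-projections, and the estimates above force $\Psi_{t_u,k+1}(q_u)\to\Psi^\alpha_{k+1}(p)$; conversely every $p\in D_\alpha$ is realized as such a limit by choosing $q_u$ on the fibre over $p$ with $|z_0(q_u)|=|z_1(q_u)|=\sqrt{|t_u|}$. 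This two-sided argument yields equality of the limit set with $\Psi^\alpha_{k+1}(D_\alpha)$ in the Hausdorff topology on closed subsets of $\CP^{N_{k+1}-1}$.

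The main obstacle I anticipate is controlling the $\alpha$-block \emph{uniformly} over the whole innermost region $C_{t_u}^{C_2\sqrt{|t_u|},\alpha}$ — not just on the good sequence's limiting slice — and in particular checking that $\zeta_{\{t_u\},\alpha,k+1}$ stays comparable to $|\log|t_u||$ and that no cancellation or degeneration of the $\alpha$-block occurs near the two ``ends'' $|z_0/z_1|\to C$ and $|z_1/z_0|\to C$ where $C_{t_u}^{C_2\sqrt{|t_u|},\alpha}$ meets $X_{t_u}^{C_2\sqrt{|t_u|},i}$. This requires the sections ${}^{t_u}s^{\alpha}_l$ to still dominate there, which follows from the fact (used already in \eqref{e-y-sqrty} and proposition \ref{prop-sqrty}) that the weight $\sigma^{2k}$ concentrates its mass well away from $\sigma=M$ and $\sigma=2y-M$; I would spell this out via the explicit integral $\int_M^{y}\sigma^{2k}\,d\sigma$ versus $\int_{\text{near the ends}}\sigma^{2k}\,d\sigma$, exactly as in the inner-sections analysis, to get the matching upper and lower bounds on $\zeta_{\{t_u\},\alpha,k+1}$ on all of $C_{t_u}^{C_2\sqrt{|t_u|},\alpha}$.
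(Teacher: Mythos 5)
Your proposal is correct and follows essentially the same route as the paper: the lower bound $\zeta_{\{t_u\},\alpha,k+1}>C_4\big|\log|t_u|\big|$ for the $\alpha$-block against the $O(|t_u|(\log|t_u|)^{2k+2})$ (resp.\ $O(|t_u|^2(\log|t_u|)^{2k+2})$) pointwise bounds for the sections in $\bcal_{1t_u}$, which force the embedding on $C_{t_u}^{C_2\sqrt{|t_u|},\alpha}$ to be governed by the ${}^{t_u}s^{\alpha}_l$ alone. Your additional remarks (the negligibility of the $\beta\neq\alpha$ blocks, the two-sided compactness argument for Hausdorff convergence of images, and the uniformity near the ends of the neck) are elaborations the paper leaves implicit rather than a different method.
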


And we denote by $\rho_{t,i,1,k+1}$ the sum $$\sum_l \parallel\tilde{s}^{i1}_{t,l} \parallel^2_{\text{KE}}.$$
Then similarly, on the region $$T_{t,i}\triangleq X_t^{\frac12\big|\log |t|\big|^{-k},i}\backslash X_t^{2 \big|\log |t|\big|^{-k},i}, $$
we have $$\rho_{t,i,k+1}>C_6\big|\log |t|\big|^{-2k}a_t^{2k+2},$$
where $a_t=\log\big|\log |t|\big| $, for some $C_6>0$. And for any $s_1\in \bcal_{1t,2}$ and $s_2\in \bcal_{2t}$, we have 
$$\parallel s_1 \parallel_{\text{KE}}^2=O(\big|\log |t|\big|^{-4k}a_t^{2k+2}),$$ 
and $$\parallel s_2 \parallel_{\text{KE}}^2=O(\big|\log |t|\big|^{-2k-1}a_t^{2k+2}),$$ 
on $T_{t,i}$. Consequently, one sees that the sections in $B_{2t_u}$ has negligible influence on the part $X_{t_u}^{2\big|\log |{t_u}|\big|^{-k},i}$ and that the sections in $\bcal_{1{t_u},2}$ has negligible influence on the part $X_{t_u}^{C_2\sqrt{|{t_u}|},i}\backslash X_{t_u}^{2\big|\log |{t_u}|\big|^{-k},i}$. 

If we denote by $$T_{0,i,t}\triangleq X_{0,i}^{\frac12 \big|\log |t|\big|^{-k}}\backslash X_{0,i}^{2 \big|\log |t|\big|^{-k}} ,$$
then clearly $$\lim_{t\to 0}\Phi_{0,i,k+1}(T_{0,i,t})=\Phi_{0,i,k+1}(D^i). $$
Therefore, we have 
$$\lim_{u\to\infty}\Psi_{t_u,k+1}(T_{t_u,i})=\Phi_{0,i,k+1}(D^i). $$
And we also have $$\lim_{u\to \infty}\Psi_{t_u,k+1}(X_{t_u}^{2\big|\log |{t_u}|\big|^{-k},i})=\Phi_{0,i,k+1}(X_{0,i}). $$

This left us with the parts $X_{t_u}^{C_2\sqrt{|{t_u}|},i}\backslash X_{t_u}^{2\big|\log |{t_u}|\big|^{-k},i}$. 

\

Back to an appropriate coordinates patch $U$ centered at a point $p\in D_\alpha\subset X_{0,i}$ such that $X_{0,i}=\{z_0=0\}$. By a unitary transformation, we can assume that in the standard frame $(dz_0\wedge\cdots\wedge dz_n)^{\otimes (k+1)}$, $s^{\alpha}_1(p)\neq 0$ while $s^{\alpha}_l(p)=0$ for $l>1$. Let $E_p$ be the $(z_0,z_1)$-plane passing through $p$, namely the points $q$ such that $z_j(q)=z_j(p)$ for $j\geq 2$.
Recall that in the construction of $^{t_u}s^{\alpha}_l$, we need to substract a solution $v_l$ (the index $l$ is added for distinction) to a certain $\dbar$-equation. 
Since $v_l$ is holomorphic around $D_\alpha$, it is represented by a holomorphic function $f(z)=a_0+\sum_{i>0}a_iz_1^i+\sum_{i<0}a_iz_1^i$. Then we can use arguments similar to those for equation \ref{e-tils-pm} to get from the $L_2$ estimate $$\int_{X_t}\parallel v_l\parallel^2_{\text{KE}}\omega_t^n<c_M\big|\log |t|\big|^{-k}$$
that on $\lcal_{p,t_u}$, where $\lcal_{p,t_u}$ is the intersection of $(X_{t_u}^{C_2\sqrt{|{t_u}|},i}\backslash X_{t_u}^{2\big|\log |{t_u}|\big|^{-k},i})$ with $E_p$, the point wise norm $\parallel v_l\parallel^2_{\text{KE}}$ is negligible compared to $\parallel {^{t_u}s^{\alpha}_1}\parallel^2_{\text{KE}}+\rho_{t_u,i,1,k+1}$, namely the quotient goes to $0$ as $u\to \infty$. A different way to see this is by noticing that $\bcal_{t_u}=\bcal_{1t_u}\cup \bcal_{2t_u}$ is an almost orthonormal basis for $\hcal_{t_u,k+1}$, which implies that small $L_2$-norm means small pointwise norm compared to the almost-Bergman kernel:$$\sum_{s\in \bcal_{t_u}}\parallel s\parallel^2_{\text{KE}} .$$
So the holomorphic function of $^{t_u}s^{\alpha}_1$ can be considered as a non-zero constant on $\lcal_{p,t_u}$. 
Similarly, on $\lcal_{p,t_u}$, the point wise norm $\parallel \tilde{s}^{j1}_{t_u,l}\parallel^2_{\text{KE}}$ for $j\neq i$ is negligible compared to $\parallel {^{t_u}s^{\alpha}_1}\parallel^2_{\text{KE}}+\rho_{t_u,i,1,k+1}$.

Then we can apply an unitary transformation to make $\frac{\tilde{s}^{i1}_{t_u,1}}{S_{\hat{i}}}(p)\neq 0$ while $\frac{\tilde{s}^{i1}_{t_u,l}}{S_{\hat{i}}}(p)= 0$ for $l> 1$. Then similarly we have 
$\parallel \tilde{s}^{i1}_{t_u,l}\parallel^2_{\text{KE}}$ for $l>1$ is negligible compared to $\parallel {^{t_u}s^{\alpha}_1}\parallel^2_{\text{KE}}+\rho_{t_u,i,1,k+1}$ on $\lcal_{p,t_u}$.
So now on $\lcal_{p,t_u}$, we only need to look at two sections $^{t_u}s^{\alpha}_1$ and $\tilde{s}^{i1}_{t_u,1}$, where the local holomorphic function of the latter can be written as $a_1z_1f(z)$, where $f(z)$ is a holomorphic function that converges to the local holomorphic representation of $\frac{{s}^{i1}_{t_u,1}}{S_{\hat{i}}}$ on $X_{0,i}\cap U$.

Then it is clear that the images $\Psi_{t_u,k+1}(\lcal_{p,t_u})$, as $u\to\infty$, converges to the linear $\CP^1$-line connecting $\Psi_{k+1}^{\alpha}(p)$ to $\Phi_{0,i,k+1}(p)$. Therefore, the image of $$X_{t_u}^{C_2\sqrt{|{t_u}|},i}\backslash X_{t_u}^{2\big|\log |{t_u}|\big|^{-k},i}$$ satisfies the description of the components $Y_{\alpha,1}$ and $Y_{\alpha,2}$ in the statement of the main theorem. Therefore we have proved the theorem except that the basis $\bcal_{t_u}=\bcal_{1t_u}\cup \bcal_{2t_u}$ of $\hcal_{t_u,k+1}$ is not orthonormal. We then only need to apply a Gram-Schmidt process to produce a genuine orthonormal basis $\bcal_{t_u}'$. Then since the inner product matrix of $\bcal_{t_u}$ converges to the identity, one sees that sequence of images of the Bergman embeddings $\Phi_{t_u,k+1}$ induced by $\bcal_{t_u}'$ converges to the same as the limit for $\Psi_{t_u,k+1}$. And we have proved the main theorem.

\

\subsection*{Embedding of normal bundle}
Let $\pi:A\to M$ be a line bundle over a connected compact complex manifold. Let $\hat{A}=A\cup M_\infty$ be the projective completion of the total space $A$. Let $L$ be a positive line bundle over $Y$.
Denote by $\pi_\infty:\hat{A}\to M_\infty$ the projection and by $L$ on $\hat{A}$ the pull-back $\pi^{-1}(L)$. And we use $M\subset \hat{A}$ to denote the zero section of $A$. We fix a section $S_M\in H^0(\hat{A},[M])$. Then we have exact sequence $$0\to H^0(\hat{A},kL+[M_\infty]-[M])\to H^0(\hat{A},kL+[M_\infty])\to H^0(M,kL+[M_\infty]),$$
where the first non-zero morphism is given by $s\mapsto s\otimes S_M$.
Since $kL+[M_\infty]-[M]$ is isomorphic to $\pi_\infty^{*}((kL+[M_\infty]-[M])\big|_{M_{\infty}})$, we have $$H^0(\hat{A},kL+[M_\infty]-[M])\simeq H^0(M_{\infty},kL+[M_\infty]-[M])\simeq H^0(M_{\infty},kL+[M_\infty]).$$ And since $H^i(\CP^1,\ocal)=0$, for $i>1$, we have vanishing higher direct images $R^i\pi_*(kL+[M_\infty]-[M])=0$ for $i>0$. And since $\pi_*(kL+[M_\infty]-[M])\simeq kL+[M_\infty]-[M]$, we have $H^1(\hat{A},kL+[M_\infty]-[M])\simeq H^1(M_{\infty},kL+[M_\infty]-[M])$. Therefore, for $k$ large enough, we have exact sequence $$0\to H^0(M_\infty,kL+[M_\infty])\to H^0(\hat{A},kL+[M_\infty])\to H^0(M,kL+[M_\infty])\to 0.$$ 
We have identifications $I_\infty=\pi_\infty\big|_{M}:M\to M_\infty $,  $$H^0(M_\infty,kL+[M_\infty])\simeq H^0(M_\infty,kL-A)$$ and $H^0(M,kL+[M_\infty])\simeq H^0(M,kL).$
So for $k$ large enough, a basis of $H^0(\hat{A},kL+[M_\infty])$ defines a Kodaira embedding $\phi_k:\hat{A}\to \PP H^0(\hat{A},kL+[M_\infty])$.  And  one sees that the image $\phi_k(\hat{A})$ can be described as the union of linear $\CP^1$'s each of which connects $\phi_{k,0}(p)$ to $\phi_{k,\infty}(p)$ for some $p\to Y$, where $\phi_{k,0}$ is the restriction of $\phi$ to $Y$ and $\phi_{k,\infty}$ is the composition $\phi_k\circ I_\infty$. 

Let $\pi_1:N_{\alpha,1}\to D_\alpha$ be the normal bundle of $D_\alpha$ in $X_{0,i}$. Let $\hat{N}_{\alpha,1}=N_{\alpha,1}\cup D_\alpha^\infty$ be the projective completion. Then it is clear that $Y_{\alpha,1}$ in the statement of the main theorem is the image of $\hat{N}_{\alpha,1}$ defined by a basis of $H^0(\hat{N}_{\alpha,1},k\pi_1^{-1}L+[D_\alpha^\infty])$.

\

\begin{proof}[Proof of theorem \ref{thm-2}.]
    Let $\{t_u\}$ be a good sequence. Let $\rho_{{\alpha,\{t_u\}},k+1}$ denote the Bergman kernel of $$(H^0(D_\alpha,(k+1)L),\vcal,d\nu_{\{t_l\},\alpha,k}). $$ 
    Then in an appropriate coordinates patch $U$, by theorem \ref{the-zhang-3.3}, proposition \ref{prop-inner} and by our previous arguments, the Bergman kernel $\rho_{t_u,k+1}$ at a point $\{|z_0|=|z_1|\}\cup X_{t_u}$ is close to $$\rho_{{\alpha,\{t_u\}},k+1}\big|\log |t| \big|^{-(2k+1)+(2k+2)}.$$ 
    Since $D_{\alpha}$ is compact, $\log \rho_{{\alpha,\{t_u\}},k+1}$ is bounded. Then one can argue by contradiction. For the lower bound, assume that there is a sequence $a_l\to 0$ such that  $$\lim_{l\to\infty }\frac{\lambda_u(a_l,k)}{\big| \log |a_l| \big|}=0.$$ 
    Then a subsequence $\{t_u\}$ is a good sequence. Then we get that $\log \rho_{{\alpha,\{t_u\}},k+1}$ is not bounded below, a contradiction. The arguments for the upper bound is similar.
    Then the $\lambda_u$ part of the theorem follows.

    For the $\lambda_l$ part, one just needs to estimate $\rho_{t_u,k+1}$ on the points where $|z_1|^2=\big| \log |t_u| \big|^{-2k-1}$ on an appropriate patch $U$. In fact, as in the proof of theorem \ref{thm-main}, this is reduced to the computation on the set $\lcal_{p,t_u}$. Then it is clear that on the points where $|z_1|^2=\big| \log |t_u| \big|^{-2k-1}$, we have $$\rho_{t_u,k+1}=O(\big| \log |t_u| \big|^{-2k-1}a_{t_u}^{2k+2}).$$  
    So we have proved theorem \ref{thm-2}.

    Notice that equation \ref{e-zeta} is not the same as what claimed the comments after theorem \ref{thm-2}, but one can apply an argument by contradiction again to get that $\rho_{t,k}>C_1\big|\log |t| \big|$ if $d(p,D)<C_2\sqrt{|t|}$.
\end{proof}

\

\begin{proof}[Proof of the claim that for $k$ large enough, $\pi_* \ocal(kL)$ is locally free on $B$].
    
    It suffices to show that any $s\in H^0(X_0,kL)$ can be extended to $\xcal$. We have the following exact sequence:
    $$0\to H^0(X_0,kL-D)\to H^0(X_0,kL)\to H^0(D,kL)\to 0, $$
    where the inclusion is given by $s\mapsto s\otimes s_D$ for $s\in H^0(X_0,kL-D)$. Given $s\in H^0(X_0,kL)$, we denote by $s_2$ the image of $s$ in $H^0(D,kL)$. Then for $k$ large enough, we know that $s_2$ can be extended to a section $\tilde{s}_2$ in $H^0(\xcal, kL)$. Then let $s_1=s-\tilde{s}_2\in H^0(X_0,kL)$. For each $i$, $s_1|_{X_{0,i}}$ can be extended to $\tilde{s}_{1,i}\in H^0(\xcal, kL)$. Then $\tilde{s}_1\triangleq \sum \tilde{s}_{1,i}$ is an extension of $s_1$. Then $\tilde{s}_1+\tilde{s}_2\in H^0(\xcal, kL)$ is an extension of $s$. 
\end{proof}
\bibliographystyle{plain}
\bibliography{references}
\end{document}